\newtheorem{Theorem}{Theorem}[section]
\newtheorem{Proposition}[Theorem]{Proposition}
\newtheorem{Lemma}[Theorem]{Lemma}
\newtheorem{Corollary}[Theorem]{Corollary}
\newtheorem{Conjecture}[Theorem]{Conjecture}
\theoremstyle{definition}
\newtheorem{Remark}[equation]{Remark}
\newtheorem{Example}[equation]{Example}
\tikzstyle{mid>}=[decoration={markings, mark=at position 0.5 with {\arrow{>}}}, postaction={decorate}]
\tikzstyle{mid<}=[decoration={markings, mark=at position 0.5 with {\arrow{<}}}, postaction={decorate}]
\tikzstyle{upper>}=[decoration={markings, mark=at position 0.8 with {\arrow{>}}}, postaction={decorate}]
\tikzstyle{upper<}=[decoration={markings, mark=at position 0.8 with {\arrow{<}}}, postaction={decorate}]
\tikzstyle{lower>}=[decoration={markings, mark=at position 0.2 with {\arrow{>}}}, postaction={decorate}]
\tikzstyle{lower<}=[decoration={markings, mark=at position 0.2 with {\arrow{<}}}, postaction={decorate}]
\newcommandx{\NewEnvironx}[5][2,3]{%
  \expandafter\newcommandx\csname start#1\endcsname[#2][#3]{#4}%
  \NewEnviron{#1}{\csname start#1\expandafter\endcsname\BODY #5}}
\newcommand{\ladderX}{1.5}
\newcommand{\ladderY}{1.5}
\newcommand{\ladderR}{0.6}
\newcommand{\laddercoordinates}[2]{
\foreach \x in {0,...,#1} {
        \foreach \y in {0,...,#2} {
                \coordinate (l\x\y) at (\x * \ladderX, \y * \ladderY);
                \coordinate (u\x\y) at ($(l\x\y)+\ladderR*(0,\ladderY)$);
                \coordinate (d\x\y) at ($(l\x\y)+(0,\ladderY)-\ladderR*(0,\ladderY)$);
        }
}
}
\newcommand{\scs}{\scriptstyle}
\newcommand{\qbins}[2]{
\left[ \;
\vcenter{\xy
(0,3)*{\scs #1}; (0,-1)*{\scs #2};
\endxy} \;
 \right]
}
\def\sA{{\sf{A}}}
\def\id{{\rm{id}}}
\def\diag{{\rm{diag}}}
\def\sI{\mathcal{I}}
\def\ABr{ABr}
\def\l{\lambda}
\def\O{\mathcal{O}}
\def\ch{{\rm{ch}}}
\def\GrBD{{\sf{Gr}}}
\def\Spec{{\rm Spec}}
\def\t{{\theta}}
\def\oD{\overline{D}}
\def\sE{{\mathcal{E}}}
\def\sF{{\mathcal{F}}}
\def\E{{\sf{E}}}
\def\F{{\sf{F}}}
\def\T{{\sf{T}}}
\def\Cone{{\rm{Cone}}}
\def\ux{{\underline{x}}}
\def\uy{{\underline{y}}}
\def\sT{{\mathcal{T}}}
\def\G{{\mathbb{G}}}
\def\Sym{{\rm Sym}}
\def\sP{{\mathcal{P}}}
\def\sQ{{\mathcal{Q}}}
\def\sF{{\mathcal{F}}}
\def\sL{{\mathcal{L}}}
\def\sV{{\mathcal{V}}}
\def\sM{{\mathcal{M}}}
\def\la{\langle}
\def\ra{\rangle}
\def\l{\lambda}
\def\A{{\mathbb{A}}}
\def\P{{\mathbb{P}}}
\def\spn{\operatorname{span}}
\newcommand{\g}{\mathfrak{g}}
\renewcommand{\sl}{\mathfrak{sl}}
\newcommand{\gl}{\mathfrak{gl}}
\newcommand{\C}{\mathbb{C}}
\newcommand{\Cx}{{\mathbb{C}^\times}}
\newcommand{\K}{\mathcal{K}}
\newcommand{\tK}{\widetilde{\mathcal{K}}}
\newcommand{\N}{\mathbb{N}}
\newcommand{\codim}{\operatorname{codim}}
\newcommand{\disj}{\mathrm{disj}}
\def\Kom{{\sf{Kom}}}
\def\hsl{\widehat{\mathfrak{sl}}}
\def\hgl{\widehat{\mathfrak{gl}}}
\def\1{{{\bf 1}}}
\def\cX{{\mathcal{X}}}
\def\cY{{\mathcal{Y}}}
\DeclareMathOperator{\Hom}{Hom}
\DeclareMathOperator{\End}{End}
\def\D{\mathbb D}
\def\bZ{\mathbb{{Z}}}
\def\Z{{\mathbb{Z}}}
\def\W{{\mathbb{W}}}
\def\Y{{\mathbb{Y}}}
\def\X{{\mathbb{X}}}
\def\uk{{\underline{{k}}}}
\def\ul{{\underline{l}}}
\def\mod{{\text{-}\mathrm{mod}}}
\begin{document}
\setcounter{tocdepth}{1}

\title{Categorical geometric symmetric Howe duality}

\author{Sabin Cautis}
\email{cautis@math.ubc.ca}
\address{Department of Mathematics\\ University of British Columbia \\ Vancouver BC, Canada}

\author{Joel Kamnitzer}
\email{jkamnitz@math.utoronto.ca}
\address{Department of Mathematics \\ University of Toronto \\ Toronto ON, Canada}

\begin{abstract}
We provide a natural geometric setting for symmetric Howe duality. This is realized as a (loop) $\sl_n$ action on derived categories of coherent sheaves on certain varieties arising in the geometry of the Beilinson-Drinfeld Grassmannian.

The main construction parallels our earlier work on categorical $\sl_n$ actions and skew Howe duality. In that case the varieties involved arose in the geometry of the affine Grassmannian. We discuss some relationships between the two actions.
\end{abstract}

\maketitle
\tableofcontents

\section{Introduction}

\subsection{Skew Howe duality}
Consider the vector space $ \Lambda^N(\C^n \otimes \C^m) $, which comes with action of the groups $ SL_n $ and $ SL_m $.  These two actions commute and their action generate each other's commutant.  This is known as skew Howe duality.

In recent years, there has been an increasing interest in skew Howe duality from the viewpoint of categorification and knot invariants.

A geometric framework for skew Howe duality was developed by Mirkovi\'c-Vybornov \cite{MVy}. In their paper, they explained that the vector space $ \Lambda^N(\C^n \otimes \C^m) $ arises (via the geometric Satake correspondence) as the direct sum of the homology of certain convolutions  $Y(\uk) $ constructed using the affine Grassmannian of $ PGL_m $.
$$
\Lambda^N(\C^n \otimes \C^m) = \bigoplus_{\substack{\uk =(k_1, \dots, k_n) \\ k_1 + \dots + k_n = N}} \Lambda^{k_1} \C^m \otimes \cdots \otimes \Lambda^{k_n} \C^m = \bigoplus_{\uk} H_*(Y(\uk))
$$
From this perspective, $ \sl_n $ acts on $  \bigoplus_{\uk} H_*(Y(\uk)) $ using natural correspondences.  In \cite{CK4}, we showed precisely how these correspondences express the $ \sl_n $ action. We also considered a quantum version using equivariant K-theory.  In particular, we constructed an equivalence of categories between $ KConv^{SL_m \times \C^\times}({\GrBD}) $ and $\O_q^{min}(\frac{SL_m}{SL_m})\mod$ where $ KConv $ denotes the category whose morphisms are given by equivariant $K$-theory of fibre products $ Y(\uk) \times_{\mathsf{Gr}} Y(\uk') $.

In our paper \cite{CKL1}, we gave a categorification of Mirkovi\'c-Vybornov's geometric framework.  More precisely, we constructed a categorical $\sl_n $ action on the derived categories of coherent sheaves $ \oplus_{\uk} D(Y(\uk)) $ of these varieties.  (Actually \cite{CKL1} only dealt with the case $n = 2$, which is the most difficult case, while papers \cite{CK3}, \cite{C1} extended this to arbitrary $n$.)

The motivation in \cite{CKL1} was to develop the skew Howe duality story in order to extend the construction of homological knot invariants using the affine Grassmannian that we began in \cite{CK1}. We completed this in \cite{C1} where we categorified the Restikhin-Turaev invariants of $\sl_m$.  More recently, the idea of using categorical skew Howe duality to produce (and compare) homological knot invariants has been influential (see \cite{MW,QR} for example).

\subsection{Symmetric Howe duality}
There is a counterpart to skew Howe duality, where we replace $ \Lambda^N(\C^n \otimes \C^m) $ with $  \Sym^N(\C^n \otimes \C^m) $.  From the perspective of the geometric Satake correspondence, this means replacing convolutions of smooth Schubert varieties $ \GrBD^{\omega_k} $ with convolutions of singular Schubert varieties $ \overline{\GrBD^{k\omega_1}} $. These singularities make working with derived categories of coherent sheaves more difficult (for example, derived tensoring becomes very complicated and we need to pass to unbounded categories). However, these singular Schubert varieties (and their convolutions) have natural deformations, which we denote $\Y(\uk)$, defined using the Beilinson-Drinfeld Grassmannian.

Like the $Y(\uk)$, these varieties $\Y(\uk)$ have natural linear algebra descriptions, given in Section \ref{sec:varieties}. However, their geometry is a little more complicated to study. For example, it turns out that $\Y(\uk)$ are smooth (which is important for our purposes) but it takes us a good part of Section \ref{sec:geometry} to prove this.

The main idea in this paper is to use derived categories of coherent sheaves on these varieties to categorify $ \Sym^N(\C^n \otimes \C^m)$.  Our main result (Theorem \ref{thm:main}) can be loosely stated as follows.

\begin{Theorem}
There exists a categorical $ L\sl_n $ action on
$$ \bigoplus_{\substack{\uk = (k_1, \dots, k_n) \\ k_1 + \dots + k_n = N}} D(\Y(\uk)) $$
which categorifies the representation $ \Sym^N(\C^n \otimes \C^m) $ of the quantum loop algebra $ U_q(L\sl_n)$.
\end{Theorem}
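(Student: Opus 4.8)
The proof proceeds by constructing the required functors $\E_i^{(r)}, \F_i^{(r)}$ (for $i$ in the Dynkin diagram of $L\sl_n$, i.e.\ including the affine node $i=0$) as Fourier--Mukai kernels supported on correspondences between the varieties $\Y(\uk)$, and then verifying the relations of a categorical $L\sl_n$ action (in the sense used in \cite{CKL1, CK3, C1}). First I would recall from Section~\ref{sec:varieties} the linear-algebra description of $\Y(\uk)$ and identify, for each pair $\uk, \uk'$ differing by moving one box between the $i$-th and $(i{+}1)$-st coordinates, the natural correspondence $W_i(\uk) \subset \Y(\uk) \times \Y(\uk')$ parametrizing the relevant subspace inclusions. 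On these correspondences I would put the obvious line bundles (pullbacks of tautological determinants, twisted by powers of the deformation parameter coming from the Beilinson--Drinfeld Grassmannian — this is exactly what promotes the $\sl_n$ action to a \emph{loop} $\sl_n$ action), giving kernels $\sE_i, \sF_i$ and their divided powers via correspondences involving partial flag bundles. The smoothness of $\Y(\uk)$, established in Section~\ref{sec:geometry}, is what makes all the Fourier--Mukai formalism (base change, adjunctions, well-behaved convolution) available; this is why that section is needed.

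Next I would verify the defining relations. The key structural input is that this should reduce to a \emph{rank-two} check plus combinatorics, exactly as in our earlier work: (a) the $\sl_2$-triple relations for each fixed node $i$ (including $i=0$) — these follow from an analysis of the two-step correspondences $\Y(\uk) \leftarrow W_i \rightarrow \Y(\uk') \leftarrow W_i' \rightarrow \Y(\uk'')$, computing $\sF_i \star \sE_i$ and $\sE_i \star \sF_i$ and producing the cone/direct-sum decomposition that categorifies $[\E_i, \F_i] = \K_i - \K_i^{-1}$ over the appropriate weight space; (b) the commuting relations $[\E_i, \E_j] = 0$ for $|i-j|>1$, which hold because the corresponding box-moves happen in disjoint coordinates so the correspondences literally commute (a clean base-change argument on a fibre product); and (c) the $\sl_3$-type relations for adjacent nodes $|i-j|=1$ — the categorified Serre relations — which are the subtle part and are handled by the same deformation/cohomology argument as in \cite{CK3, C1}, now applied to the $\Y(\uk)$. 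A conceptual shortcut I would try to exploit: since $\Y(\uk)$ is a deformation of the convolution of singular Schubert varieties $\overline{\GrBD^{k\omega_1}}$ while the earlier $Y(\uk)$ deform convolutions of the smooth $\GrBD^{\omega_k}$, there may be a direct geometric comparison (a flat family, or a common resolution) relating the two families of correspondences, which would let me transport relations already proven in \cite{C1} rather than reprove them from scratch.

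Finally I would identify the decategorified action: take the (equivariant) $K$-theory or Grothendieck group of $\oplus_{\uk} D(\Y(\uk))$, show it is isomorphic as a $\bZ[q^{\pm}]$-module to $\Sym^N(\C^n \otimes \C^m)$ with its weight-space decomposition indexed by the $\uk$ (each $D(\Y(\uk))$ contributing the weight space $\Sym^{k_1}\C^m \otimes \cdots \otimes \Sym^{k_n}\C^m$ via a cell decomposition or a resolution of $\Y(\uk)$), and check that the classes $[\sE_i], [\sF_i]$ act by the known matrices of the $U_q(L\sl_n)$-representation. The normalization of the $q$-twists on the kernels is fixed precisely by matching these matrix coefficients on the affine node.

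**Main obstacle.** The hardest step is (c), the categorified Serre relations for adjacent nodes, together with correctly handling the affine node $i=0$: one must show the relevant three-step convolutions on the $\Y(\uk)$ produce exactly the expected complex (no spurious cohomology), and because the $\Y(\uk)$ are genuinely more singular-looking deformations than the $Y(\uk)$ — smooth, but only after the hard work of Section~\ref{sec:geometry} — the cohomology-vanishing computations underpinning the Serre relation are more delicate than in the skew case. If the geometric comparison with the family $Y(\uk)$ in the shortcut above goes through cleanly, it removes this obstacle; otherwise one repeats, in this setting, the deformation argument of \cite{C1}.
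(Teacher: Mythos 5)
The overall architecture you describe — build Fourier--Mukai kernels on correspondences between the $\Y(\uk)$, verify the $\sl_2$, commutator, and Serre-type relations, then decategorify to $\Sym^N(\C^n\otimes\C^m)$ — is broadly the right shape, but two of your specific choices diverge from the paper's proof in ways that matter.

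First, you propose to work in the Kac--Moody presentation, ``including the affine node $i=0$.'' The paper deliberately does not do this: the $(L\gl_n,\theta)$ action it constructs is in the \emph{loop} (Drinfeld--Jimbo) presentation, with generators $\E_i,\F_i,\E_{i,1},\F_{i,-1}$ for $i\in\{1,\dots,n-1\}$ only. The loop generators have clean geometric incarnations — $\sE_{i,\ell}$ is the structure sheaf of the one-step correspondence $\Y_i^1(\uk)$ twisted by $\det(\sV_i/\sV_i')^{-\ell}$ and a grading shift — whereas a direct $\E_0$ kernel (a correspondence moving a box from $L_n$ to $L_1$ in one step) is not a simple geometric object at all and would be painful to manipulate. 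The Kac--Moody generators are obtained only afterwards, by a formal change of presentation. If you insist on constructing $\E_0$ directly, you are taking on substantially harder geometry than necessary and you have no candidate kernel for it.

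Second, your ``shortcut'' — transporting relations from the $Y(\uk)$ via a flat family or common resolution — fails on a concrete point. It is tempting to think of $\Y(\uk)$ as a deformation of $Y(\uk)$ over $\A_\uk$, but Section~\ref{sec:Ys} shows the zero fiber $\ch^{-1}(0)$ is cut out by $zL_i\subset L_i$, which properly contains $Y(\uk)$ (which requires the stronger $zL_i\subset L_{i-1}$). So $Y(\uk)$ is a closed subvariety of the special fiber, not the special fiber itself, and there is no flat family here that would let you import the skew-side computations. The paper proves the relations from scratch.

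Finally, you do not identify what actually makes the symmetric-side relations tractable: the computation of the relative dualizing sheaf $\omega_\pi\cong\O_{\Y(\uk)}\{2k_ik_{i+1}\}$ for the forgetful projections (via the ramification divisor of the finite map on the regular locus and the diagram over $\A_\uk$), and the consequent decomposition $\pi_*\O_{\Y(\uk)}\cong\bigoplus_{[k_i+1]}\O\{-k_i\}$, which rests on a torsion-vs-torsion-free argument after Verdier duality. These are the real workhorses of the verification — they replace the Grassmannian-bundle computations of the skew case — and they use the smallness result of Section~\ref{sec:geometry} in an essential way (codimension $\ge 2$ of the non-finite locus). You also omit the $\theta$-condition (the $I\,\theta\,I$ check on $\E_i^{(2)}$), which is part of the definition of $(L\gl_n,\theta)$ action and requires its own geometric argument via the characteristic-polynomial maps $e_1^{(i)}$.

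So: the skeleton of your plan is sound, and your K-theory step is essentially the paper's Lemma~\ref{lem:Ktheory}, but the choice of presentation and the proposed shortcut would both need to be replaced, and the specific technical lemmas that make the computation go through are absent.
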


Notice that the action above is of the loop algebra $U_q(L\sl_n)$. Strictly speaking only the finite part $U_q(\sl_n)$ plays a role in Howe duality. However, as is (almost always) the case with geometric actions of Lie algebras (for instance \cite{N} in the case of quiver varieties or \cite{CK4} in the case of $Y(\uk)$) the action naturally extends to a loop algebra action by using line bundles. For completeness we define and study this larger action in this paper. In order to do this we extend the notion of a categorical $(\sl_n,\t)$ action to an $(L\sl_n,\t)$ action in Section \ref{sec:catactions}. Roughly speaking, this means having an action of $\E_i,\F_i,\E_{i,1}$ and $\F_{i,-1}$ where $i \in \{1,\dots,n-1\}$ and which satisfy some relations.

Despite the similarities with categorical geometric skew Howe duality, our symmetric setting is of a different flavour. In particular, the shift functor $\la 1 \ra$ that appears in the definition of the categorical action is mapped to a purely equivariant shift $\{1\}$ (instead of a homological-equivariant shift $[1]\{-1\}$ on the skew side). One reason this is interesting, and one of the motivations for writing this paper, is that it will allow us to define exotic t-structures in the sense of \cite{BM}, as we now briefly explain.

The $(L\sl_n,\t)$ action in this paper is manifestly in its loop (or Drinfeld-Jimbo) realization. One can pass to the Kac-Moody presentation to obtain a categorical $(\hsl_n,\t)$ action. This means that we have generators $\E_i,\F_i$ for $i \in \{0,1,\dots,n-1\}$.  In \cite{CaKo} we will use this action to show that there exists an essentially unique ``exotic'' t-structure on $\oplus_\uk D(\Y(\uk))$ such that these generators $\E_i,\F_i$ for $i \in \{0,1,\dots,n-1\}$ are t-exact.

\subsection{Analogy with the Springer resolution}
This paper was motivated in part by the analogy with the Springer resolution as we now explain.

The $(\sl_n,\t)$ action in our main theorem leads (using \cite{CK3}) to an action of the finite braid group on $\oplus_{\uk} D(\Y(\uk))$. In Section \ref{sec:braids} we explain how the larger $(L\sl_n,\t)$ can be used to extend this to an affine braid group action on $\oplus_{\uk} D(\Y(\uk))$ (this extends the main result of \cite{CK3}).

On the other hand, for any semisimple Lie algebra, we can consider the Springer resolution $ \widetilde{\mathcal N} $ and its Grothendieck extension $ \widetilde{\g}$.  Recall that Bezrukavnikov-Riche \cite{BR} constructed actions of the affine braid group on $ D(\widetilde{\mathcal N}) $ and $ D(\widetilde{\g}) $. In their action, the generators of the finite braid group act by irreducible correspondences on $ D(\widetilde{\g}) $ (but not on $ D(\widetilde{\mathcal N}) $).

Our varieties $ \Y(\uk) $ are analogous to $ \widetilde{\g} $, just as our skew Howe duality varieties $ Y(\uk) $ were analogous to $ \widetilde{\mathcal N} $. For example, the role of the morphism $ \widetilde{\g} \rightarrow \mathfrak{h} $ is played by a morphism $ \ch : \Y(\uk) \rightarrow \A_{\uk} $ (where $ \A_\uk $ is a partial symmetrization of $\A^N$). When $ n = m $ and $ \uk = 1^n $ the variety $ \Y(\uk) $ actually contains an open subset isomorphic to $ \widetilde{\g} $ where $\g = \gl_n$ and this analogy can be made precise.

In particular, in Section \ref{sec:equiv}, we give an explicit description of the generators of the affine braid group acting on $\oplus_\uk D(\Y(\uk))$. It then follows that this action agrees with that from \cite{BR} if we restrict to the open subset $\widetilde{\g} \subset \Y(1^n)$. This also implies that the exotic t-structure on $D(\Y(1^n))$ that we briefly discussed above restricts to the exotic t-structure on $D(\widetilde{\g})$ that was studied in \cite{BM}.

\subsection{Relation to Skew-Sym Howe duality}
In \cite{CKM}, we gave a presentation of the subcategory of $ SL_m $ representations whose objects are tensor products of exterior powers of $ \C^m $.  It is pretty easy to construct all the generating morphisms in this category; the hard part is to determine the relations.  For this step, we used the fact that via skew Howe duality, $ \End_{SL_m} (\Lambda^N(\C^n \otimes \C^m)) $ is a generalized Schur quotient of $ U(\sl_n) $.

It is natural to try to use symmetric Howe duality to give a presentation of the subcategory of $ SL_m $ representations whose objects are tensor products of symmetric powers of $ \C^m $.  However, the situation is more complicated here, because although $  \End_{SL_m} (\Sym^N(\C^n \otimes \C^m)) $ is a quotient of $ U (\sl_n) $, it is not a Schur quotient, so it is not clear what the relations should be.

This problem was recently solved by Rose and Tubbenhauer \cite{RT} in the $\sl_2$ case and then extended by Tubbenhauer, Vaz and Wedrich \cite{TVW} to $\sl_n$. Their idea was to consider symmetric and exterior powers together and then use a single ``dumbbell'' relation (which expresses that $ \C^m $ is both a symmetric and an exterior power) to ``glue'' the two sides together. This gives a presentation of the subcategory whose objects are tensor products of both symmetric and exterior powers.

In this paper, we find a geometric incarnation of their theory by considering the varieties $\Y(\uk)$ and $Y(\uk)$. In Section \ref{sec:skewsym}, we prove that the inclusion morphism $\iota: Y(1^n) \rightarrow \Y(1^n) $ intertwines the braid group actions on both sides. We then explain how this leads to a geometrization (or categorification) of the dumbbell relation.

\subsection{Acknowledgements}
We would like to thank Roman Bezrukavnikov, Alexander Braverman, Vinoth Nandakumar and David Rose for helpful conversations. S.C. was supported by an NSERC discovery/accelerator grant and J.K. was supported by an NSERC discovery/accelerator grant and a Sloan fellowship.

\section{Geometric setup}\label{sec:setup}

In this section we review our conventions, define the varieties $\Y(\uk)$ and the correspondences between them that will be used to construct the categorical $\sl_n$ action.

\subsection{Notation}

For a smooth variety $X$ equipped with a $\C^\times$-action, let $D(X)$ denote the bounded derived category of $\C^\times$-equivariant coherent sheaves. We denote by $\O_X \{k\}$ the structure sheaf of $X$ with non-trivial $\C^\times$ action of weight $k$. More precisely, if $f \in \O_X(U)$ is a local function then, viewed as a section $f' \in \O_X \{k\}(U)$, we have $t \cdot f' = t^{-k}(t \cdot f)$. More generally, $\sM \{k\} := \sM \otimes \O_X \{i\}$.

Suppose $Y$ is another smooth variety with a $\C^\times$ action with an object $\sP \in D(X \times Y)$ whose support is proper over $Y$ (we will always assume this is the case from hereon). Then $\sP$ induces a functor $\Phi_{\sP}: D(X) \rightarrow D(Y)$ via $(\cdot) \mapsto \pi_{2*}(\pi_1^* (\cdot) \otimes \sP)$. We say $\sP$ is the kernel which induces $\Phi_{\sP}$.

Its right and left adjoints $\Phi_{\sP}^R$ and $\Phi_{\sP}^L$ are induced by $\sP^R := \sP^\vee \otimes \pi_2^* \omega_X [\dim(X)]$ and $\sP^L := \sP^\vee \otimes \pi_1^* \omega_Y [\dim(Y)]$ respectively. Moreover, if $\sQ \in D(Y \times Z)$ then $\Phi_{\sQ} \circ \Phi_{\sP} \cong \Phi_{\sQ * \sP}: D(X) \rightarrow D(Y)$ where $\sQ * \sP = \pi_{13*}(\pi_{12}^* \sP \otimes \pi_{23}^* \sQ)$ is the convolution product.

\subsection{The varieties} \label{sec:varieties}

Fix positive integer $m$. For a sequence of natural numbers $\uk = (k_1, \dots, k_n)$ define the varieties $\Y(\uk)$ by
\begin{equation*}
\Y(\uk) := \{ \C[z]^m = L_0 \subseteq L_1 \subseteq \dots \subseteq L_n \subseteq \C(z)^m : z L_i \subset L_i, \dim(L_i/L_{i-1}) = k_i \}
\end{equation*}
where the $L_i$ are complex vector subspaces. For convenience we sometimes encode this variety as
$$\{L_0 \xrightarrow{k_1} L_1 \xrightarrow{k_2} \dots \xrightarrow{k_n} L_n\}$$
where the superscripts denote the codimension of the inclusion and the condition $zL_i \subset L_i$ is implicit.

On $\Y(\uk)$ we have natural vector bundles $\sV_i$ whose fiber over a point $(L_0 \subseteq \dots \subseteq L_n)$ is the vector space $L_i/L_0$. By forgetting $L_n$ we also have projections
$$\Y(k_1, \dots, k_{n-1}, k_n) \rightarrow \Y(k_1, \dots, k_{n-1})$$
whose fibers are isomorphic to $\Y(k_n)$. Thus $\Y(\uk)$ is an iterated $\Y(k)$-bundle (for various $k \in \N$).

Now consider $\Y(k) = \{L_0 \subset L_1: zL_1 \subset L_1, \dim(L_1/L_0)=k\}$. Then the characteristic polynomial of $z|_{L_1/L_0}$ gives us a (proper) map $\ch: \Y(k) \rightarrow \A_k$ where $\A_k := \A^k/S_k$ is the quotient by the symmetric group. More generally, this gives us a map $\ch: Y(\uk) \rightarrow \A_{\uk}$ where $\A_{\uk} := \A_{k_1} \times \dots \times \A_{k_n}$.

There is an action of $\C^\times$ on $\C(z)$ given by $t \cdot z^k = t^{2k} z^k$. This induces an action of $\C^\times$ on $\C(z)^m$. It is not hard to check that this also induces a $\C^\times$ action on $\Y(\uk)$. The map $\ch: \Y(\uk) \rightarrow \A_\uk$ is then $\C^\times$-equivariant if we equip $\A^1 = \Spec \C[x]$ with the dilation action $t \cdot x = t^2 x$ and $\A_k = (\A^1)^k/S_k$ with the corresponding action. We will always work $\C^\times$-equivariantly with respect to this action.

Now consider $\Y(k) \rightarrow \A_k = \Spec \C[e_1, \dots, e_k]$ where the $e_i$ are the elementary symmetric functions in the eigenvalues of $z$. Then multiplication by $e_i$ defines a map of degree $2i$, $\O_{\Y(k)} \rightarrow \O_{\Y(k)} \{2i\}$ (there is also the obvious generalization to $\Y(\uk)$). Moreover, the action of $z$ defines a degree $2$ endomorphism $z: \sV_i \rightarrow \sV_i \{2\}$ on the vector bundles $\sV_i$ on $\Y(\uk)$.

\subsection{$\Y(\uk)$ vs $Y(\uk)$}\label{sec:Ys}
In \cite{CK2} we defined the similar varieties
$$
Y(\uk) := \{ \C[[z]]^m = L_0 \subseteq L_1 \subseteq \dots \subseteq L_n \subseteq \C((z))^m : z L_i \subset L_{i-1}, \dim(L_i/L_{i-1}) = k_i \}
$$
(there we used the notation $Y_\beta$ rather than $Y(\uk)$). These varieties are non-empty only if $ 0 \le k_i \le m $ for all $i$, in which case they are iterated Grassmannian bundles.  (On the other hand, the $\Y(\uk)$ are non-empty for any $k_i \in \N$.)

We have an inclusion $ Y(\uk) \subset \Y(\uk) $ as follows.  First note that if $ \C[z]^m \subset L $ is a $ \C[z] $ submodule of $ \C(z)^m $, then we can form the base change $$ R^m \subset L_R = L \otimes_{\C[z]} R \subset \C(z)^m \otimes_{\C[z]} R $$ for any ring $ R $ containing $ \C[z] $.  In particular, if $ R = \C[[z]] $, then the map $ L \mapsto L_R $ gives us an identification
$$ \ch^{-1}(0) = \{ \C[[z]]^m = L_0 \subseteq L_1 \subseteq \dots \subseteq L_n \subseteq \C((z))^m : z L_i \subset L_i, \dim(L_i/L_{i-1}) = k_i \} $$
and thus we see that $Y(\uk) \subset \ch^{-1}(0) $.

\subsection{Correspondences} \label{sec:corr}

For $i \in I = \{1, \dots, n-1\}$ we write $\alpha_i$ for the sequence $(0,\dots,0,-1,1,0,\dots,0)$ where the $-1$ occurs in position $i$. We define correspondences $\Y_i^r(\uk) \subset \Y(\uk) \times \Y(\uk+r\alpha_i)$ by
\begin{equation*}
\Y_i^r(\uk) := \{ (L_\bullet, L'_\bullet) : L_j = L_j' \text{ for } j \ne i, L_i' \subset L_i \}.
\end{equation*}
In other words, $\Y_i^r(\uk)$ is the variety
$$\{L_0 \xrightarrow{k_1} L_1 \xrightarrow{k_2} \dots \xrightarrow{k_{i-1}} L_{i-1} \xrightarrow{k_i-r} L_i' \xrightarrow{r} L_i \xrightarrow{k_{i+1}} L_{i+1} \xrightarrow{k_{i+2}} \dots \xrightarrow{k_n} L_n \}$$
As before we also have natural bundles $\sV_j=\sV_j'$ for $j \ne i$ as well as $\sV_i,\sV_i'$. Note that $\Y_i^r(\uk) \cong \Y(\ul)$ where $\ul = (k_1, \dots, k_i-r, r, k_{i+1}, \dots, k_n)$.

\section{Geometry of the $\Y$ spaces}\label{sec:geometry}

We would like to discuss the geometry of $\Y(\uk)$ in a little more detail. The main results in this section are Proposition \ref{prop:smooth} which implies that all the varieties $\Y(\uk)$ are smooth and Corollary \ref{cor:small} which shows that the natural projection maps $ \Y(\uk) \rightarrow \Y(k_1 + \dots + k_n) $ are small.

\subsection{Another version of $\Y(k)$}
In this section, we will work with an alternate version of $ \Y(k) $.  We define
$$ \X(k) = \{\C[z]^m = L_0 \supset L : zL \subset L, \dim(L_0/L)=k\}$$

Choose a non-degenerate symmetric bilinear form $(\cdot,\cdot) $ on $ \C^m$, which we extend $ \C(z)$-linearly to $ \C(z)^m$.  Then we given $ L \in \X(k) $, we define $$ L^\perp := \{ v \in \C(z)^m : (v, w) \in \C[z] \text{ for all $ w \in L $} \} $$
It is easy to see that $ L^\perp \in \Y(k) $ and that the map $ \X(k) \rightarrow \Y(k) $ is an isomorphism.  In this section, for convenience, we will work with $ \X(k) $.

It will be convenient to introduce a notation for the (disjoint) union of these spaces
\begin{equation*}
\X := \{\C[z]^m = L_0  \supset L : z L \subset L\, \dim L_0/L < \infty\} = \cup_k \X(k)
\end{equation*}

\subsection{An open subset}
For each $ p \in \N$, let
$$ W_p := \spn(e_1, \dots, z^{p-1}e_1, \dots, e_m, \dots, z^{p-1} e_m) \subset L_0 .$$
Fix $ k $ and let us write $ k = q m + r $ where $ 0 \le r < m $.

Given $ L \in \X(k) $, the codimension of $ L $ in $ L_0 $ is $ qm+r $, thus $ \dim L \cap W_{q+1} \ge m-r $, while $ \dim L \cap W_q \ge 0 $.  Thus, it is natural to consider
$$ \X(k)_0 = \{ L : L \cap W_q = 0,\ \dim L \cap W_{q+1} = m-r  \} $$
This is an open subset of $ \X(k)$.

Given a point $ L \in \X(k)_0$, we have a surjective map $ W_{q+1} \rightarrow L_0/L $.  The kernel of this map has dimension $ m-r $ and intersects trivially the subspace $ W_q$.  Thus we get $m-r$ dimensional subspace of $ W_{q+1}/W_q = \spn(z^qe_1, \dots, z^qe_m) = \C^m $.  Thus we get a map
$ \X(k)_0 \rightarrow \G(m-r, m) $. This map is clearly $ GL_m $ equivariant and since $ GL_m $ acts transitively on the base, it is a $ GL_m $-fibre bundle.

Let $ \X(k)_1 $ denote the fibre of $ \X(k)_0 \rightarrow \G(m-r, m) $ over the point $ \spn(z^qe_{r+1}, \dots, z^qe_m)$.  We can see that $ \X(k)_1 $ is precisely the locus of points in $ \X(k) $ such that $$ [e_1], \dots, [z^q e_1], \dots, [e_r], \dots, [z^q e_r], [e_{r+1}], \dots, [z^{q-1} e_{r+1}], \dots, [e_m], \dots, [z^{q-1}e_m] $$
 forms a basis for $ L_0 / L $ and $ z^q e_i \in W_q + L_0 $ for $ i = r+1, \dots, m $.

\subsection{A version of the Mirkovi\'c-Vybornov isomorphism}

More generally, for any $ \mu = (\mu_1, \dots, \mu_m) \in \N^m $ we can consider the locus
\begin{equation}
 \begin{aligned}
  \X_\mu = \{  L : &[e_1], \dots, [z^{\mu_1 - 1} e_1], \dots, [e_m], \dots, [z^{\mu_m-1}e_m] \text{ forms a basis for $ L_0 / L $ } \\
& \text{ and } z^{\mu_i} e_i \in W_{\mu_i} + L \text{ for all $i$} \}
 \end{aligned}
\end{equation}
Note that if $\mu_1 + \dots + \mu_m = k $, then $ \X_\mu \subset \X(k) $. Also note that
$$\X_{(q+1, \dots, q+1, q, \dots, q)} = \X(k)_1.$$
Let $ M_\mu $ be the variety of $ k \times k $ block matrices of the following form
\begin{equation}\label{eq:M}
\begin{aligned}
M_\mu = \{ &A = (A_{ij}) : A_{ij} \text{ is a matrix of size $ \mu_i \times \mu_j $ where} \\
& \text{$A_{ii}$ has 1s just below the diagonal and all other non-zero entries in the last column},\\
& \text{$A_{ij}$, for $i \ne j$, has all non-zero entries in the last column but not below row $ \mu_j $} \}
\end{aligned}
\end{equation}
 Note that $ M_\mu $ is an affine space of dimension $ \sum_{i,j} \min(\mu_i, \mu_j) $.
 
 \begin{Example}
 Consider $ \mu = (3,2)$.  Then $ M_\mu $ is the set of all matrices of the form
 $$
 \begin{bmatrix}
 0 & 0 & * & 0 & * \\
  1 & 0 & * & 0 & * \\
    0 & 1 & * & 0 & 0 \\
      0 & 0 & * & 0 & * \\
        0 & 0 & * & 1 & * \\
        \end{bmatrix}
        $$
 where $ * $ denotes an arbitrary complex number.
 \end{Example}

The following isomorphism was first constructed by Mirkovi\'c-Vybornov \cite{MVy} (though we believe our construction is a bit simpler).  The matrices in $M_\mu $ are the transpose of those considered by Mirkovi\'c-Vybornov.

\begin{Theorem} \label{th:MViso}
We have an isomorphism $\X_\mu \cong M_\mu $.
\end{Theorem}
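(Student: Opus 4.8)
The plan is to write down explicit maps $\X_\mu \to M_\mu$ and $M_\mu \to \X_\mu$ and to check they are mutually inverse morphisms; everything then reduces to bookkeeping with the block structure \eqref{eq:M} together with one genuine lemma. Going from $M_\mu$ to $\X_\mu$: given $A \in M_\mu$, view $\C^k$ as the $\C[z]$-module $V_A$ on which $z$ acts by $A$, with its basis $v_{i,p}$ ($1\le i\le m$, $1\le p\le \mu_i$) split into blocks as in \eqref{eq:M}; let $\phi_A\colon \C[z]^m\to V_A$ be the $\C[z]$-linear map with $\phi_A(e_i)=v_{i,1}$ and set $L_A:=\ker\phi_A\subseteq L_0$. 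Because the off-diagonal blocks of $A$ are concentrated in their last columns and $A_{ii}$ has $1$'s just below the diagonal, $Av_{i,p}=v_{i,p+1}$ for $p<\mu_i$, hence $\phi_A(z^le_i)=A^lv_{i,1}=v_{i,l+1}$ for $0\le l\le\mu_i-1$. This makes $\phi_A$ surjective (so $\dim L_0/L_A=k$) and shows the classes $[z^le_i]$, $0\le l\le\mu_i-1$, form the distinguished basis of $L_0/L_A\cong V_A$. Finally $\phi_A(z^{\mu_i}e_i)=Av_{i,\mu_i}$ is the last column of the $i$-th block column, whose nonzero entries (by the defining constraints of $M_\mu$) lie in rows $p\le\min(\mu_i,\mu_j)$ of block $j$; thus $\phi_A(z^{\mu_i}e_i)\in\spn\{v_{j,p}:p\le\min(\mu_i,\mu_j)\}\subseteq\phi_A(W_{\mu_i})$, so $z^{\mu_i}e_i\in W_{\mu_i}+L_A$. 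Hence $L_A\in\X_\mu$, and $A\mapsto L_A$ is manifestly algebraic.

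Going from $\X_\mu$ to $M_\mu$: send $L$ to the matrix $A_L$ of the operator $z$ on $L_0/L$ in the distinguished ordered basis $[z^le_i]$. The relations $z[z^le_i]=[z^{l+1}e_i]=v_{i,l+2}$ for $l<\mu_i-1$ produce the subdiagonal $1$'s of $(A_L)_{ii}$ and force every column of $A_L$ outside the last column of each block column to be a standard basis vector, so the only thing to verify is that the last column of block column $i$ — the coordinate vector of $[z^{\mu_i}e_i]$ — is supported in rows $p\le\min(\mu_i,\mu_j)$ of block $j$. Since $z^{\mu_i}e_i\in W_{\mu_i}+L$ gives $[z^{\mu_i}e_i]\in\overline{W_{\mu_i}}$ (the image of $W_{\mu_i}$ in $L_0/L$), this will follow from the inclusion $\overline{W_p}\subseteq\spn\{[z^le_q]:0\le l\le\min(p,\mu_q)-1\}$ for all $p$.

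That inclusion is the technical heart, and I would prove it by induction on $l$ of the statement $Q(l)$: ``$[z^le_q]\in\spn\{[z^{l'}e_{q'}]:l'\le\min(l,\mu_{q'}-1)\}$ for all $q$''. If $l\le\mu_q-1$ then $[z^le_q]$ is itself one of those basis vectors. If $l\ge\mu_q$, use $z^{\mu_q}e_q\in W_{\mu_q}+L$ to write $[z^{\mu_q}e_q]$ as a combination of $[z^{l'}e_{q'}]$ with $l'\le\mu_q-1$, multiply by $z^{l-\mu_q}$ to express $[z^le_q]$ through classes $[z^{l''}e_{q'}]$ with $l''\le l-1$, and apply $Q(l'')$ to each; the exponent bounds then match up. Given $Q$, the inclusion for $\overline{W_p}$ follows since $W_p$ is spanned by the $z^le_q$ with $l\le p-1$. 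For mutual inverseness: the computation in the first paragraph shows $z$ acts by $A$ in the distinguished basis of $L_0/L_A$, so $A_{L_A}=A$; conversely the linear isomorphism $V_{A_L}\to L_0/L$, $v_{i,p}\mapsto[z^{p-1}e_i]$, intertwines the $z$-actions by construction of $A_L$, and composing with it identifies $\phi_{A_L}$ with the quotient map $L_0\to L_0/L$ (both are $\C[z]$-linear and send $e_i\mapsto[e_i]$), so $L_{A_L}=L$.

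The main obstacle is exactly the inclusion $\overline{W_p}\subseteq\spn\{[z^le_q]:l<\min(p,\mu_q)\}$: the subtlety is that $W_p$ a priori contains $z^le_q$ for $l$ up to $p-1$, and when $\mu_q<p$ such a class need not be one of the distinguished basis vectors, so it must be reduced — and the defining property of $\X_\mu$, that the top power $z^{\mu_q}e_q$ falls into $W_{\mu_q}+L$, is precisely what powers the reduction $Q(l)$. Everything else amounts to unwinding the shape \eqref{eq:M}.
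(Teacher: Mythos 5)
Your proposal is correct, and at bottom it follows the same strategy as the paper: the forward map sends $L$ to the matrix of $z$ in the distinguished basis $\{[z^le_i]\}_{l<\mu_i}$, and the inverse map realizes $L$ as the $\C[z]$-submodule generated by $v_j=z^{\mu_j}e_j-\sum_i p_{ij}(z)e_i$ (which you recast as $\ker\phi_A$ for $\phi_A\colon\C[z]^m\to V_A$, a somewhat cleaner packaging of the same $L$). Where you genuinely go further than the published argument is in isolating and proving the reduction lemma $Q(l)$, equivalently the inclusion $\overline{W_p}\subseteq\spn\{[z^le_q]:l<\min(p,\mu_q)\}$. The paper's justification that $A\in M_\mu$ is the single sentence that $[z^{\mu_i}e_i]$ is a combination of $[z^re_j]$ with $r<\mu_i$ by the condition $z^{\mu_i}e_i\in W_{\mu_i}+L$; but when $\mu_j<\mu_i$ the vectors $[z^re_j]$ with $\mu_j\le r<\mu_i$ are not members of the distinguished basis, so one must re-express them, and it is exactly your inductive $Q(l)$ that shows this re-expression stays within the allowed rows. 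The paper leaves this implicit. Likewise, the paper does not explicitly verify that the candidate inverse map actually lands in $\X_\mu$ (that the distinguished classes form a basis of $L_0/L_A$ and that $z^{\mu_i}e_i\in W_{\mu_i}+L_A$), whereas you do so directly from the block shape \eqref{eq:M}. So your write-up is the same construction with the two soft spots of the paper's proof made watertight; nothing needs to change.
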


\begin{proof}
We define maps $ \X_\mu \rightarrow M_\mu $ and $ M_\mu \rightarrow \X_\mu $ as follows.

First, given $ L \in \X_\mu$, let $$ B = \{ [e_1], \dots, [z^{\mu_1 - 1}e_1], \dots, [e_m], \dots, [z^{\mu_m - 1}e_m] \} $$ which by hypothesis is a basis for $ L_0/L$.  So we define $ A $ to be the matrix of $ z $ acting on $ L_0 / L $ with respect to the basis $ B $. Note that if $ a < \mu_i $, then $ z[z^{a-1}e_i] = [z^a e_i] $ is still in our basis, whereas $ z[z^{\mu_i -1} e_i] = [z^{\mu_i} e_i] $ can be written as a linear combination of $ [z^r e_j] $ for $ r < \mu_i $ (since $ z^{\mu_i} e_i \in W_{\mu_i} + L $).  This explains why $ A $ lies in $ M_\mu $.

Conversely, given $ A \in M_\mu $, let $ p_{ij}(z) $ be the polynomial made using the entries in the last column $ A_{ij} $ as coefficients.  Then we set $ v_j = z^{\mu_j}e_j - \sum_i p_{ij}(z) e_i $.  We define $ L = \spn(v_1, \dots, v_n) $.

Now we check that these maps are mutual inverses. Begin with $ A \in M_\mu $ and define $ L = \spn(v_1, \dots, v_n) $ as above.  Then $ [z^{\mu_j}e_j] = \sum_i [p_{ij}(z) e_i] $ in $ L_0/L$, and thus the matrix constructed from $ L $ will be $ A $ again.  The converse is similar.
\end{proof}

\subsection{Proof of smoothness}

\begin{Corollary}
$\X(k)_0 $ is smooth.
\end{Corollary}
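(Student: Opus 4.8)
The plan is to combine the fibre bundle structure $\X(k)_0 \to \G(m-r,m)$ constructed just above with the Mirkovi\'c--Vybornov isomorphism of Theorem \ref{th:MViso}. Write $k = qm+r$ with $0 \le r < m$ as before.

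First I would recall that the fibre of $\X(k)_0 \to \G(m-r,m)$ over the distinguished point $\spn(z^q e_{r+1}, \dots, z^q e_m)$ is $\X(k)_1$, and that by the discussion preceding Theorem \ref{th:MViso} we have $\X(k)_1 = \X_\mu$ for $\mu = (q+1, \dots, q+1, q, \dots, q)$, with $r$ entries equal to $q+1$ and $m-r$ entries equal to $q$. Since $\mu_1 + \dots + \mu_m = r(q+1) + (m-r)q = qm + r = k$, indeed $\X_\mu \subset \X(k)$, and Theorem \ref{th:MViso} gives $\X_\mu \cong M_\mu$. As $M_\mu$ is an affine space (of dimension $\sum_{i,j} \min(\mu_i,\mu_j)$), we conclude that the fibre $\X(k)_1$ is smooth.

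Next, since $\X(k)_0 \to \G(m-r,m)$ is $GL_m$-equivariant and $GL_m$ acts transitively on $\G(m-r,m) = GL_m/P$, the total space is the associated bundle $GL_m \times_P \X(k)_1$. Because $GL_m$ is a special group, the principal bundle $GL_m \to GL_m/P$ is Zariski-locally trivial, hence so is $\X(k)_0 \to \G(m-r,m)$; thus $\X(k)_0$ is Zariski-locally on the base a product of an open subset of $\G(m-r,m)$ with $\X(k)_1$, a product of smooth varieties. Therefore $\X(k)_0$ is smooth.

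There is no real obstacle here: the substantive input is already Theorem \ref{th:MViso}, and the remaining argument is a formal consequence of the fibre bundle structure. The only point needing a little care is the local triviality of the $GL_m$-bundle (so that smoothness of base and fibre propagates to the total space), which one handles via the existence of Zariski-local sections of $GL_m \to GL_m/P$, or equivalently by noting that a fibre bundle with smooth fibre over a smooth base has smooth total space.
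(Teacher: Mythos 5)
Your proof is correct and takes essentially the same approach as the paper: identify $\X(k)_1$ with the affine space $M_\mu$ via Theorem \ref{th:MViso}, and then use the $GL_m$-equivariant fibre bundle structure $\X(k)_0 \to \G(m-r,m)$ to propagate smoothness to the total space. You simply make explicit the Zariski-local triviality step that the paper leaves implicit.
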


\begin{proof}
Since $ \X(k)_0 \rightarrow \G(m-r, m) $ is fibre bundle, it is enough to check $\X(k)_1 $ is smooth, which follows from the previous theorem.
\end{proof}

\begin{Proposition}\label{prop:smooth}
The variety $\X(k)$ is smooth.
\end{Proposition}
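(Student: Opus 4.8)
First I would reduce smoothness of $\X(k)$ to the assertion that it is covered by Zariski opens isomorphic to affine spaces. The single open set $\X(k)_0$ above does not cover $\X(k)$, so I would introduce, for every $\mu=(\mu_1,\dots,\mu_m)\in\N^m$ with $\mu_1+\dots+\mu_m=k$, the subset
\[
\X(k)^\mu := \{\,L\in\X(k)\ :\ [e_1],\dots,[z^{\mu_1-1}e_1],\ \dots,\ [e_m],\dots,[z^{\mu_m-1}e_m]\ \text{is a basis of}\ L_0/L\,\},
\]
where the $j$-th block of vectors is empty when $\mu_j=0$. Since $\dim_\C L_0/L = k$ for every $L\in\X(k)$, this is the condition that $k$ prescribed sections of the rank-$k$ bundle $L_0/L$ be linearly independent, so $\X(k)^\mu$ is open in $\X(k)$. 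Note that $\X(k)^\mu$ is an open relaxation of the locally closed subset $\X_\mu$ appearing in Theorem~\ref{th:MViso}: the latter carries the extra, non-open, condition $z^{\mu_i}e_i\in W_{\mu_i}+L$, and the two agree precisely when $\mu$ is constant.

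Second, I would check that the $\X(k)^\mu$ cover $\X(k)$. Given $L$, write $M:=L_0/L$, a $\C[z]$-module of $\C$-dimension $k$ generated by $\bar e_1:=[e_1],\dots,\bar e_m:=[e_m]$, and filter it by $M^{(0)}:=0$ and $M^{(j)}:=M^{(j-1)}+\C[z]\bar e_j$. Each successive quotient $M^{(j)}/M^{(j-1)}$ is a finite-dimensional cyclic $\C[z]$-module, hence isomorphic to $\C[z]/(f_j)$; set $\mu_j:=\deg f_j\ge 0$. Then $z^a\bar e_j$ for $0\le a<\mu_j$ descend to a basis of $M^{(j)}/M^{(j-1)}$, so their concatenation over $j$ is a basis of $M$ and $\sum_j\mu_j=\dim_\C M=k$. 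Hence $L\in\X(k)^\mu$.

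Third, I would identify each $\X(k)^\mu$ with an affine space by an extension of the argument proving Theorem~\ref{th:MViso}. A point $L\in\X(k)^\mu$ is determined by the matrix of $z$ acting on $L_0/L$ in the chosen basis, together with the coordinate vectors of $[e_j]$ for those $j$ with $\mu_j=0$. The $z$-matrix is forced into a prescribed block shape: for $j$ with $\mu_j\ge 1$ and $0\le a<\mu_j-1$, the column indexed by $z^a e_j$ is the standard basis vector indexed by $z^{a+1}e_j$; the remaining column of each such block (the one indexed by $z^{\mu_j-1}e_j$), together with the vectors $[e_j]$ for $\mu_j=0$, is unconstrained. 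This is a total of $m$ free vectors in $\C^k$, giving $\X(k)^\mu\cong\A^{km}$. The inverse map sends such data to the kernel $L$ of the $\C[z]$-linear surjection $\C[z]^m\to\C^k$ that acts by the given matrix and sends $e_j$ to the prescribed first basis vector of block $j$ (resp.\ to the chosen vector when $\mu_j=0$); exactly as in Theorem~\ref{th:MViso}, the prescribed block shape is what makes this map well defined, surjective, and such that the specified vectors form a basis of the quotient, and the two assignments are mutually inverse and algebraic. Covering $\X(k)$ by these affine open sets then finishes the proof.

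I expect the main point to be realizing that the pieces $\X_\mu$ supplied by Theorem~\ref{th:MViso} are only locally closed, not open, in $\X(k)$, so one cannot simply invoke that theorem on an open cover; replacing them by the open sets $\X(k)^\mu$ then forces the short extra bookkeeping above, and in particular the need to keep track of the generator images $[e_j]$ for $\mu_j=0$, which the $z$-matrix does not see, in order to get a clean identification $\X(k)^\mu\cong\A^{km}$. The remaining steps (openness, the covering lemma, and making the isomorphism scheme-theoretic rather than bijective on points) are routine, the last one handled just as in Theorem~\ref{th:MViso}.
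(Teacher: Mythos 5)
Your proof is correct, and it takes a genuinely different route from the paper's. The paper does not build an open affine cover: instead, using the $\C^\times$-contraction along $\ch:\X(k)\to\A_k$ and the $GL_m(\C[z])$-equivariance, it reduces smoothness to a neighbourhood of each $GL_m(\C[z])$-orbit in $\ch^{-1}(0)$, and then quotes a standard fact about the affine Grassmannian (that $GL_m(\C[z^{-1}])z^{\omega_r}$ meets every $GL_m(\C[z])$-orbit in the relevant component) to conclude that the single open set $\X(k)_0$ — whose smoothness follows from Theorem~\ref{th:MViso} together with the fibre-bundle description over $\G(m-r,m)$ — already suffices. Your argument trades that equivariance and Grassmannian input for the explicit cover $\{\X(k)^\mu\}_\mu$ and a mild extension of the Mirkovi\'c--Vybornov parametrization. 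Your diagnosis of the obstruction to naively covering by the $\X_\mu$ (they are only locally closed, since the condition $z^{\mu_i}e_i\in W_{\mu_i}+L$ is closed, not open) is exactly right, and your bookkeeping — one free column per block with $\mu_j\ge 1$, plus a free vector $[e_j]$ per index with $\mu_j=0$, giving $m$ free vectors in $\C^k$ and hence $\X(k)^\mu\cong\A^{km}$, consistent with $\dim\X(k)=km$ — checks out, as does the covering argument via the filtration $M^{(j)}=M^{(j-1)}+\C[z]\bar e_j$. The paper's route is shorter given the tools it is willing to cite, while yours is more self-contained and yields a slightly stronger output: an explicit cover of $\X(k)$ by affine spaces.
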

\begin{proof}
Just like $\Y(k$), $\X(k)$ comes equipped with a map $\ch: \X(k) \rightarrow \A_k = \A^k/S_k$ which records the eigenvalues of $z$ acting $L_0/L$. This map is $\C^\times$ equivariant where $\C^\times$ acts diagonally on $\A^k$. Since the singular locus of $\X(k)$ is closed and $\C^\times$ invariant it suffices to show that $\X(k)$ is smooth at all the points in $\ch^{-1}(0)$.  Also the group $GL_m(\C[z]) $ acts on $ \X(k) $ preserving $ \ch^{-1}(0)$ and the singular locus.

From the previous Corollary we know that the open set $ \X(k)_0 $ is smooth.  Thus, by the above discussion, it suffices to show that $\X(k)_0 $ meets every $ GL_m(\C[z]) $ orbit in $ \ch^{-1}(0)$.

This fiber $\ch^{-1}(0)$ is the closure $\overline{\GrBD^{k\omega_1}} = \overline{GL_m(\C[z]) z^{k\omega_1}} $ of the Schubert cell in the affine Grassmannian corresponding to the weight $k \omega_1$.   On the other hand, $ \X(k)_0 \cap \ch^{-1}(0) = GL_m(\C[z^{-1}])z^{\omega_r} \cap\overline{\GrBD^{k\omega_1}}  $ (where as usual $ k = qm + r $).  Standard result (see for example \cite{BF}) about the geometry of the affine Grassmannian imply that $ GL_m(\C[z^{-1}])z^{\omega_r} $ meets every $ GL_m(\C[z]) $ orbit in this connected component and thus we are done.
\end{proof}

\begin{Corollary} \label{cor:smooth}
 All the varieties $ \Y(\uk) $ are smooth.
\end{Corollary}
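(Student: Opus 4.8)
The plan is to deduce the smoothness of all $\Y(\uk)$ from the already-established smoothness of $\X(k)\cong\Y(k)$ (Proposition \ref{prop:smooth}), using the fibre-bundle structure recorded in Section \ref{sec:varieties}. The key observation is that by forgetting $L_n$ we have a projection
$$\Y(k_1,\dots,k_{n-1},k_n)\longrightarrow\Y(k_1,\dots,k_{n-1})$$
whose fibres are isomorphic to $\Y(k_n)$; more precisely, this map is a locally trivial fibre bundle (Zariski locally, since the relevant base can be covered by the charts $\X_\mu$ trivializing the relevant data). Hence $\Y(\uk)$ is an iterated $\Y(k)$-bundle over a point.

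The steps I would carry out are as follows. First, recall from Proposition \ref{prop:smooth} and the isomorphism $\X(k)\cong\Y(k)$ that each $\Y(k)$ is smooth. Second, argue by induction on $n$: the base case $n=1$ is exactly the smoothness of $\Y(k_1)$. For the inductive step, assume $\Y(k_1,\dots,k_{n-1})$ is smooth; since the forgetful map $\Y(\uk)\to\Y(k_1,\dots,k_{n-1})$ is a fibre bundle with smooth fibre $\Y(k_n)$, and smoothness is local on the total space and preserved under the bundle projection (a fibre bundle with smooth base and smooth fibre has smooth total space — locally it is a product), we conclude $\Y(\uk)$ is smooth. One should be slightly careful to note that the fibre bundle here is in the Zariski topology (or at least étale-locally trivial), which suffices for the smoothness conclusion; this follows because the choice of $L_n\supset L_{n-1}$ with $zL_n\subset L_n$ and prescribed codimension depends only on the $\C[z]$-module $L_{n-1}$, and over a suitable open cover this data varies in a locally trivial way.

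The main (and essentially only) obstacle is making precise the claim that the forgetful projection $\Y(\uk)\to\Y(k_1,\dots,k_{n-1})$ is genuinely a locally trivial fibre bundle rather than merely a map with isomorphic fibres — but this is already asserted in Section \ref{sec:varieties} (``$\Y(\uk)$ is an iterated $\Y(k)$-bundle''), so for the purposes of this corollary it may simply be invoked. Given that, the proof is a one-line induction.

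\begin{proof}
By Proposition \ref{prop:smooth} and the isomorphism $\X(k)\cong\Y(k)$, each variety $\Y(k)$ is smooth. As noted in Section \ref{sec:varieties}, the forgetful map $\Y(k_1,\dots,k_n)\to\Y(k_1,\dots,k_{n-1})$ is a fibre bundle with fibre $\Y(k_n)$, so $\Y(\uk)$ is an iterated $\Y(k)$-bundle. Since a fibre bundle with smooth base and smooth fibre has smooth total space, induction on $n$ shows that every $\Y(\uk)$ is smooth.
\end{proof}
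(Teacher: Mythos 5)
Your proof is correct and matches the paper's intended argument: Corollary~\ref{cor:smooth} is stated without proof in the paper precisely because it follows immediately from Proposition~\ref{prop:smooth} (giving smoothness of each $\Y(k) \cong \X(k)$) combined with the observation, already made in Section~\ref{sec:varieties}, that $\Y(\uk)$ is an iterated $\Y(k)$-bundle via the forgetful projections. Your induction on $n$ and the remark about local triviality are exactly the (implicit) content of the paper's deduction, so no gap.
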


\subsection{Relation to the Beilinson-Drinfeld Grassmannian}

The variety $ \X $ defined above can be seen as the positive part of the Beilinson-Drinfeld Grassmannian.  More precisely, we can see that $ \X $ is the space of finite codimension subsheaves of the trivial vector bundle on $ \A^1 $.  Such a subsheaf is automatically a rank $ m $ locally-free sheaf and automatically extends to a rank $m $ vector bundle on $ \P^1$.

Define
\begin{equation*}
 \begin{aligned}
\GrBD_{BD} := \{ (\mathcal V, \sigma) : & \text{ $ \mathcal V $ is a rank $m$ vector bundle on $ \P^1 $ and $\sigma : \mathcal V \dashrightarrow \mathcal O_{\P^1}^m$ } \\
& \text{ is a trivialization defined away from finitely many points in $ \A^1$ } \}.
\end{aligned}
\end{equation*}
We see that $ \X $ can be embedded into $ \GrBD_{BD} $ as the locus where $ \sigma $ extends to an inclusion of coherent sheaves.  Moreover, $ \X_\mu $ is the locus of pairs $ (\mathcal V, \sigma) $ such that $ \mathcal V \cong \O(-\mu_1) \oplus \cdots \oplus \O(-\mu_m) $ and such that the trivialization $ \sigma $ at $ \infty $ takes the Harder-Narasimhan partial flag of $ \mathcal V $ to the standard partial flag.  (In fact $ \X(k)_0 $ is the locus of pairs $ (\mathcal V, \sigma) $ such that $ \mathcal V \cong \O(-q-1)^r \oplus \O(-q)^{m-r} $ --- this is a generic condition for a degree $ -k $ vector bundle.)

\subsection{Factorization and smallness}

Recall that a proper surjective morphism $ \pi : Y \rightarrow X $ is called semismall, if $ X $ admits a finite stratification $ X = \sqcup_{\alpha \in J} X(\alpha) $ such that for each $ \alpha $, and each $ x \in X(\alpha) $, $ \dim \pi^{-1}(x) \le \frac{1}{2} \codim X(\alpha) $.  A semismall morphism is called small, if we have a strict equality for every stratum, except for the generic stratum (in which case $\codim S_\alpha = 0 $).

A factorizable semismall sequence is sequence of commutative diagrams\begin{equation}
\label{eq:factdiagram}
\xymatrix{
\cY_n \ar[rr]^{\pi} \ar[dr]_{f_Y} && \cX_n \ar[dl]^{f_X} \\
& \A^n &
}
\end{equation}
for $n \ge 1$ with $ \cY_n, \cX_n $ irreducible, $ f_X $ flat, and satisfying the following conditions.
\begin{enumerate}
\item \label{item:semismall} Define $ X_n = f_X^{-1}(0)$, $ Y_n =f_Y^{-1}(0) $.  The map $ Y_n \rightarrow X_n $ is semismall.  Moreover, $ Y_1 \rightarrow X_1 $ is an isomorphism.
\item \label{item:diagonal} For each $n $, we have a diagonal copy of $ \A^1 $ inside $ \A^n $.  We should be given isomorphisms $ f_Y^{-1}(\A^1) \cong Y_n \times \A^1 $ and $ f^{-1}(\A^1) \cong X_n \times \A^1 $ such that the restriction of \eqref{eq:factdiagram} to this locus gives the diagram
$$
\xymatrix{
Y_n \times \A^1 \ar[rr]^{(\pi, id)} \ar[dr] && X_n \times \A^1 \ar[dl]\\
& \A^1 &
}
$$
\item \label{item:disjoint} For each $k,l $ with $k+l = n$, let us write $ (\A^k \times \A^l)^\disj $ for the locus $(\ux,\uy)$ such that $x_i \ne y_j$ and denote $ (\cY_k \times \cY_l)^\disj  = (f_Y, f_Y)^{-1} (\A^k \times \A^l)^\disj $ and similarly for $ (\cX_k \times \cX_l)^\disj$. We require isomorphisms $ f_Y^{-1}(\A^k \times \A^l)^\disj \cong (\cY_k \times \cY_l)^\disj $ and $f_X^{-1}(\A^k \times \A^l)^\disj \cong (\cX_k \times \cX_l)^\disj $ such that the restriction of \eqref{eq:factdiagram} to $ (\A^k \times \A^l)^\disj $ gives the diagram
$$
\xymatrix{
(\cY_k \times \cY_l)^\disj \ar[rr]^{(\pi,\pi)} \ar[dr]_{(f_Y,f_Y)} && (\cX_k \times \cX_l)^\disj \ar[dl]^{(f_X,f_X)} \\
&(\A^k \times \A^l)^\disj &
}
$$
\end{enumerate}

\begin{Theorem} \label{th:factsmall}
If $ \cY_n \rightarrow \cX_n $ is a factorizable semismall sequence then $ \cY_n \rightarrow \cX_n $ is small for each $ n \ge 1$.
\end{Theorem}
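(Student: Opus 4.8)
The plan is to prove smallness by induction on $n$, leveraging the factorization structure to reduce the estimate on fiber dimensions over an arbitrary stratum to the already-known semismallness of $Y_n \to X_n$ over the central fiber. The base case $n=1$ is immediate from condition \eqref{item:semismall}, since $Y_1 \to X_1$ is an isomorphism, hence trivially small. So assume $n \ge 2$ and that smallness holds for all smaller indices.

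First I would set up the stratification of $\cX_n$ that witnesses smallness. The map $f_X : \cX_n \to \A^n$ is flat, and $\A^n$ itself is stratified by the ``collision pattern'' of the coordinates $(x_1, \dots, x_n)$: the open stratum is $\{x_i \text{ all distinct}\}$, and deeper strata record which subsets of coordinates coincide and at what common value (in particular, whether that value is $0$ or not). Pulling back this stratification of $\A^n$ along $f_X$, and then refining each piece using the stratification of the central fibers $X_j$ coming from the inductive hypothesis (and from the semismall stratification of $X_n = f_X^{-1}(0)$ in the deepest case $j = n$), produces a finite stratification $\cX_n = \sqcup_\alpha \cX_n(\alpha)$. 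The claim is that $\pi$ is small with respect to this stratification.

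Second, I would verify the fiber-dimension inequality stratum by stratum, distinguishing cases according to where the image point $x$ sits over $\A^n$. If $x$ lies over the open locus of pairwise-distinct coordinates, then after possibly collecting coordinates into clusters of distinct values we are in the situation of condition \eqref{item:disjoint}: $\cX_n$ near $x$ looks like a product $(\cX_k \times \cX_l)^\disj$ (iterated, until all coordinates are separated, giving a product of copies of the various $\cX_j$ with $j < n$, or ultimately $\cX_1$), and $\pi$ is the product of the corresponding $\pi$'s. Over the ``all distinct, all nonzero'' locus one iterates down to $\cX_1$'s, where $\pi$ is an isomorphism, so $\pi$ is generically an isomorphism — this handles the generic stratum and shows the codimension-$0$ stratum has $0$-dimensional fibers. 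For a deeper stratum where, say, $j$ of the coordinates have collided to a common nonzero value and the rest are distinct from it and from each other, the disjointness isomorphism factors $\cX_n$ locally as $\cX_j \times (\text{product of lower }\cX\text{'s})$, and by the translation isomorphism \eqref{item:diagonal} the $\cX_j$ factor, with its collision now at the origin, is identified with $X_j \times \A^1$; smallness for index $j$ (inductive hypothesis, or the hypothesis for $n$ itself when $j=n$, i.e. the collision is at the central fiber with nothing else going on) gives the strict inequality $\dim \pi^{-1}(x) < \frac12 \codim$ in that factor, and the product estimate preserves strictness. The only stratum not covered this way is the deepest one, where $j = n$ and all coordinates collide at $0$: there $x \in X_n$ and we invoke semismallness of $Y_n \to X_n$ directly, noting that for non-generic strata of $X_n$ semismallness must in fact be strict — this is where one uses that $X_n$ is irreducible together with the fact (forced by flatness of $f_X$ and the product description over the disjoint locus) that the generic fiber of $\pi$ over $X_n$ is a point, so $Y_n \to X_n$ is birational, and a birational semismall map is automatically small.

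The main obstacle I anticipate is the bookkeeping in the last step: showing that semismall $+$ birational (equivalently, generically finite of degree one) forces smallness, i.e. ruling out a non-generic stratum of $X_n$ over which the fiber dimension equals exactly $\frac12$ the codimension without forcing a second component of $Y_n$ dominating $X_n$. The clean way is a dimension/irreducibility count: if some stratum $S \subset X_n$ of codimension $c > 0$ had $\dim \pi^{-1}(S) = \dim S + \frac{c}{2}$, then since $Y_n$ is irreducible of dimension $\dim X_n$, comparing $\dim \pi^{-1}(S) = \dim X_n - \frac{c}{2} < \dim Y_n$ is consistent — so one really does need the birationality input, obtained from factorization, to promote the weak inequality to a strict one. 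I would make this precise by combining the semismall inequality over $S$ with the fact that $\pi$ restricted to the preimage of the open stratum is an iso (hence $\overline{\pi^{-1}(\text{open stratum})} = Y_n$ has the right dimension), which pins down the component structure and yields strict inequality elsewhere. Everything else is a formal consequence of the product and translation isomorphisms in the definition of a factorizable semismall sequence.
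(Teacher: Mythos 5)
Your setup — stratifying $\cX_n$ by collision pattern and refining by the semismall stratifications of the central fibers, then using factorization to reduce each stratum to a product of lower-index pieces — is the same as the paper's (the paper indexes collision patterns by equivalence relations $\sigma$ on $\{1,\dots,n\}$). But the way you propose to handle the deepest stratum, where all $n$ coordinates collide at $0$, contains a genuine error and misses the actual mechanism that produces strictness.

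You claim that $Y_n \to X_n$ being birational and semismall forces it to be small, and that irreducibility of $Y_n$ "pins down the component structure and yields strict inequality." This is false. A birational semismall morphism need not be small: the blowup of a smooth surface at a point is birational and semismall (the exceptional fiber is a $\P^1$ over a point of codimension $2$, and $1 = \frac{1}{2}\cdot 2$), with $Y$ irreducible, yet it is not small. Your own dimension count in the final paragraph already shows that no contradiction arises from $\dim \pi^{-1}(S) = \dim X_n - \frac{c}{2}$ when $Y_n$ is irreducible; the subsequent sentence asserting that birationality nonetheless "yields strict inequality elsewhere" has no argument behind it and cannot be repaired, since the conclusion is simply not true in general. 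You have been led astray by trying to prove smallness of the \emph{central-fiber} map $Y_n \to X_n$, which is neither needed nor generally true.

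The point you are missing is that the stratum containing such an $x$ is not $X_n(\alpha)$ but $X_n(\alpha) \times \A^1 \subset \cX_n$, and its codimension in $\cX_n$ is not $d_n(\alpha)$ but $d_n(\alpha) + (n-1)$: there is extra codimension coming from the diagonal $\A^1 \hookrightarrow \A^n$ being collapsed, which the fiber-dimension estimate (a purely central-fiber quantity, bounded by $\tfrac12 d_n(\alpha)$) never sees. More generally, for a stratum indexed by an equivalence relation $\sigma$ with parts of sizes $\lambda_1,\dots,\lambda_{|\sigma|}$ and refined by $(\alpha_1,\dots,\alpha_{|\sigma|})$, the paper computes
$$\codim_{\cX_n} \cX_n(\sigma,\underline\alpha) = \sum_i d_{\lambda_i}(\alpha_i) + (n - |\sigma|),$$
using flatness of $f_X$ to identify $\dim\cX_n = n + \dim X_n = n\dim\cX_1$, while semismallness of each $Y_{\lambda_i}\to X_{\lambda_i}$ gives $\dim\pi^{-1}(x) \le \tfrac12\sum_i d_{\lambda_i}(\alpha_i)$. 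Strictness then falls out of the $n - |\sigma| > 0$ term whenever $\sigma$ is not trivial, with no induction on $n$ and no appeal to smallness (or birationality) of any central-fiber map. If you replace your final step with this codimension count you obtain a correct proof.
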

\begin{proof}
Consider an equivalence relation $\sigma$ on $I_n := \{1, \dots, n\}$ and let $|\sigma|$ be the number of equivalence classes in $\sigma$. Denote by $\Delta_\sigma$ the corresponding diagonal inclusion of $(\A^{|\sigma|})^\disj$ into $\A^n$. For example, for the unique relation $ \sigma $ with one equivalence class, the map $\Delta_\sigma: \A^1 \rightarrow \A^n$ is the small diagonal. While for the trivial relation (where there are $n$ distinct equivalence classes), the image of $\Delta_\sigma$ is the open dense subset $(\A^n)^\disj$. Clearly
$$\A^n = \sqcup_{\sigma} \Delta_\sigma (\A^{|\sigma|})^\disj.$$

Fix an equivalence relation $ \sigma $ on $I_n$.  If we restrict $\pi: \cY_n \rightarrow \cX_n$ to $\Delta_\sigma(\A^{|\sigma|})$ and repeatedly apply relations (\ref{item:disjoint}) and (\ref{item:diagonal}), then we obtain
\begin{equation}
\label{eq:diagonals}
Y_{\lambda_1} \times \cdots \times Y_{\lambda_{|\sigma|}} \times (\A^{|\sigma|})^\disj \rightarrow X_{\lambda_1} \times \cdots \times X_{\lambda_{|\sigma|}} \times (\A^{|\sigma|})^\disj
\end{equation}
where $\l_1, \dots, \l_{|\sigma|}$ are the sizes of the equivalence classes.

We can now describe the stratification for $\cX_n$ as follows. First we have
$$\cX_n = \sqcup_{\sigma} f_X^{-1} (\Delta_\sigma(\A^{|\sigma|})^\disj).$$
Next, from (\ref{eq:diagonals}) above
$$f_X^{-1}(\Delta_\sigma(\A^{|\sigma|})^\disj) \cong X_{\l_1} \times \dots \times X_{\l_{|\sigma|}} \times (\A^{|\sigma|})^\disj.$$
By (\ref{item:semismall}), each $X_j$ comes with a stratification $X_j = \sqcup_{\alpha} X_j(\alpha)$ for which $Y_j \rightarrow X_j$ is semismall. This gives us a stratification
$$X_{\l_1} \times \dots \times X_{\l_{|\sigma|}} \times (\A^{|\sigma|})^\disj = \bigsqcup_{\underline{\alpha}} X_{\l_1}(\alpha_1) \times \dots X_{\l_{|\sigma|}}(\alpha_{|\sigma|}) \times (\A^{|\sigma|})^\disj.$$
Thus we get an overall stratification $ \cX_n = \sqcup_{\sigma,\underline{\alpha}} \cX_n(\sigma, \underline{\alpha}) $ of $ \cX_n$. It remains to check that $\pi: \cY_n \rightarrow \cX_n$ is small for this stratification.

Let us choose a point $x$ in the stratum $\cX(\sigma, \underline{\alpha}) = X_{\l_1}(\alpha _1) \times \dots X_{\l_{|\sigma|}}(\alpha_{|\sigma|}) \times (\A^{|\sigma|})^\disj$. Since each $Y_i \rightarrow X_i$ is semismall and using (\ref{eq:diagonals}) we find that
\begin{equation}
 \label{eq:dimfibre}
 \dim \pi^{-1}(x) \le \frac{1}{2} \sum_i d_{\lambda_i}(\alpha_i)
\end{equation}
where $d_j(\alpha)$ denotes the codimension of $X_j(\alpha)$ inside $X_j$. On the other hand, the codimension of $ \cX_n(\sigma, \underline{\alpha}) $ in $\cX_n$ is
\begin{equation}
 \label{eq:codim}
 \codim \cX_n(\sigma, \underline{\alpha}) = \sum_i d_{\l_i}(\alpha_i) + \dim \cX_n - \sum_i \dim X_{\l_i} - {|\sigma|}
\end{equation}
Now, by flatness and by considering the fibre $ f_X^{-1}(0)$, we see that $\dim \cX_n = n + \dim X_n $.  On the other hand, considering the preimage of the generic stratum in $ \A^n $, we see that $ \dim \cX_n = n \dim \cX_1$.  Thus \eqref{eq:codim} gives
$$ \codim \cX_n(\sigma, \underline{\alpha}) = \sum_i d_{\lambda_i}(\alpha_i) + n - {|\sigma|}.$$
Comparing with \eqref{eq:dimfibre}, we see that $\dim \pi^{-1}(x)$ is strictly less than half the codimension of $\cX_n(\sigma, \underline{\alpha})$ in $\cX_n$ unless ${|\sigma|}=n$ which corresponds to the open strata. The result follows.
\end{proof}

With this general framework in mind, we now return to our situation. First of all, there is a map $ \Y(1^n) \rightarrow \Y(n)$, given by $ \{ L_0 \xrightarrow{1} L_1 \xrightarrow{1} \cdots \xrightarrow{1} L_n \} \mapsto L_n$.  Together with $ \ch : \Y(1^n) \rightarrow \A^n$ we get a map $ \pi :\Y(1^n) \rightarrow \Y(n) \times_{\A_n} \A^n $.

\begin{Proposition} \label{prop:isfact}
The sequence of diagrams
$$
\xymatrix{
\Y(1^n) \ar[rr]^\pi \ar[dr]^\ch && \Y(n) \times_{\A_n} \A^n \ar[dl] \\
&\A^n &
}
$$
is a factorizable semismall sequence.
\end{Proposition}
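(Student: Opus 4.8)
The plan is to verify the four requirements in the definition of a factorizable semismall sequence: that $\Y(1^n)$ and $\cX_n:=\Y(n)\times_{\A_n}\A^n$ are irreducible, that $f_X\colon\cX_n\to\A^n$ (the projection; here $\A^n=\A_{1^n}$) is flat, and that conditions (\ref{item:semismall})--(\ref{item:disjoint}) hold for $f_Y=\ch$, $f_X$ and $\pi$. The whole argument rests on two facts: (1) a finite-length $\C[z]$-module $M$ decomposes canonically as $M=\bigoplus_\lambda M_\lambda$ according to the support of $z$ on $\A^1=\Spec\C[z]$, and an inclusion of lattices over such an $M$ decomposes accordingly; and (2) the classical fact that the $n$-fold convolution morphism $\mu_n\colon\GrBD^{\omega_1}\ttt\cdots\ttt\GrBD^{\omega_1}\to\overline{\GrBD^{n\omega_1}}$ is proper, surjective and semismall (see \cite{MVy}).

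First I would identify the central fibres. By the base-change discussion of \S\ref{sec:Ys}, $Y_n:=\ch^{-1}(0)\subset\Y(1^n)$ consists of flags $\C[[z]]^m=L_0\subseteq\cdots\subseteq L_n$ with $\dim L_i/L_{i-1}=1$ on which $z$ acts with eigenvalue $0$, i.e.\ $zL_i\subseteq L_{i-1}$; this is exactly $Y(1^n)=\GrBD^{\omega_1}\ttt\cdots\ttt\GrBD^{\omega_1}$. Likewise $X_n:=f_X^{-1}(0)\cong\ch^{-1}(0)\subset\Y(n)$, which is $\overline{\GrBD^{n\omega_1}}$ by the identification with the Beilinson--Drinfeld Grassmannian in \S\ref{sec:geometry}, and the induced map $Y_n\to X_n$, $L_\bullet\mapsto L_n$, is $\mu_n$. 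So condition (\ref{item:semismall}) is precisely fact (2), its last clause (the case $n=1$) being the tautology $\GrBD^{\omega_1}=\GrBD^{\omega_1}$ (note $\A_1=\A^1$, so $\cX_1=\Y(1)$ and $\pi=\id$).

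Next the factorization conditions. For (\ref{item:disjoint}), fix $n=k+l$ and a point lying over $(\A^k\times\A^l)^\disj$. Then $L_n/L_0$ is supported on the disjoint union of the first $k$ and the last $l$ eigenvalues, hence by fact (1) splits canonically as $N'\oplus N''$ with $N'$ (resp.\ $N''$) supported at the first $k$ (resp.\ last $l$); taking preimages $L',L''\subseteq L_n$ gives $L_n=L'+L''$, $L'\cap L''=L_0$, with $L'\in\Y(k)$ and $L''\in\Y(l)$ over the corresponding eigenvalues, while conversely $L'+L''$ recovers $L_n$ whenever the supports are disjoint; this is the isomorphism $f_X^{-1}(\A^k\times\A^l)^\disj\cong(\cX_k\times\cX_l)^\disj$. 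For $\Y(1^n)$ the same argument applies flag by flag: over $(\A^k\times\A^l)^\disj$ the first $k$ steps of the flag carry the first $k$ eigenvalues and the last $l$ steps the remaining ones, so $L_k/L_0$ and $L_n/L_k$ have disjoint support, the corresponding extension therefore splits canonically, and the flag decomposes into a complete flag of $L_k/L_0$ together with one of a canonical lift of $L_n/L_k$, compatibly with $\pi$. Condition (\ref{item:diagonal}) is handled identically: over the small diagonal $\A^1\hookrightarrow\A^n$ all eigenvalues coincide with a single $\lambda$, so $L_n/L_0$ is supported entirely at $z=\lambda$, and the translation automorphism $z\mapsto z+\lambda$ of $\C(z)^m$ — which fixes $L_0=\C[z]^m$ — identifies $f_Y^{-1}(\A^1)\cong Y_n\times\A^1$ and $f_X^{-1}(\A^1)\cong X_n\times\A^1$ compatibly with $\pi$. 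In both cases one must still check these support-theoretic bijections are morphisms of schemes, which follows because the support decomposition of a coherent sheaf finite over the base is algebraic over the locus where the relevant supports are disjoint.

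Finally, flatness and irreducibility. Applying fact (1) to $\ch\colon\Y(n)\to\A_n$, the fibre over a point with distinct eigenvalues of multiplicities $a_1,\dots,a_s$ is $\prod_j\overline{\GrBD^{a_j\omega_1}}$, of dimension $\sum_j a_j(m-1)=n(m-1)$ independently of the point; since $\Y(n)$ is smooth (Corollary~\ref{cor:smooth}) and $\A_n\cong\A^n$ is regular, equidimensionality forces $\ch$ to be flat, and flatness is preserved by the base change $\A^n\to\A_n$, so $f_X$ is flat. For irreducibility, $\Y(1^n)$ is an iterated $\Y(1)$-bundle and $\Y(1)$ is irreducible (cf.\ \S\ref{sec:geometry}), so $\Y(1^n)$ is irreducible; and $\pi\colon\Y(1^n)\to\cX_n$ is proper — it factors the proper map $\Y(1^n)\to\Y(n)$, $L_\bullet\mapsto L_n$ (a relative variety of complete $z$-stable flags), through the finite map $\cX_n\to\Y(n)$ — and surjective, since every finite-length $\C[z]$-module admits a $z$-stable complete flag realizing any prescribed ordering of its eigenvalues; hence $\cX_n=\pi(\Y(1^n))$ is irreducible too. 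The one genuine input here is fact (2), the semismallness of the minuscule convolution morphism; everything else is bookkeeping around the support decomposition of fact (1), so I expect the only real point to be citing (2) cleanly, with the verification that the support-theoretic isomorphisms are morphisms of schemes rather than mere bijections being the most tedious part.
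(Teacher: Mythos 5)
Your proposal is correct and follows essentially the same route as the paper: identify the central fibres $Y_n=Y(1^n)$ and $X_n=\overline{\GrBD^{n\omega_1}}$ and the map between them with the minuscule convolution morphism, invoke the classical semismallness of that morphism for condition~(i), and use the canonical support decomposition of finite-length $\C[z]$-modules (equivalently, the factorization property of the Beilinson--Drinfeld Grassmannian) for condition~(iii). Two small differences in presentation are worth noting. First, for condition~(ii) you use the translation automorphism $z\mapsto z+\lambda$ of $\C[z]$ (which fixes $L_0$), whereas the paper forms $L_{\C[[z-x]]}=L\otimes_{\C[z]}\C[[z-x]]$ and then identifies $\C[[z-x]]\cong\C[[z]]$; these are the same shift written differently, and your version makes the algebraicity in $\lambda$ visible without passing through a formal completion. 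Second, you explicitly verify the irreducibility of $\cY_n,\cX_n$ and the flatness of $f_X$ (via miracle flatness, using Corollary~\ref{cor:smooth} and the constancy of fibre dimension over $\A_n$), while the paper's proof begins with ``we must check the three properties in the definition'' and leaves these auxiliary hypotheses of the definition implicit. Your extra care there is a genuine improvement in completeness, not just bookkeeping, since $f_X$ flat is used later in the dimension count in the proof of Theorem~\ref{th:factsmall}.
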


\begin{proof}
We must check the three properties in the definition.

For the first property (\ref{item:semismall}), let $ V_n = \ch^{-1}(0) \subset \Y(1^n) $ and $ W_n = \ch^{-1}(0) \subset \Y(n) $.  Note that $ W_n $ is also the fibre over $ 0 $ in the map $  \Y(n) \times_{\A_n} \A^n  \rightarrow \A^n $.  We can see that $ W_n $ is just the locus $\overline{\GrBD^{n\omega_1}} $ in the affine Grassmannian of $ GL_m$ and $ V_n $ is just its usual resolution using an iterated bundle of $ \GrBD^{\omega_1} $ (in fact $ V_n = Y(1^n) $).  This is well-known to be semismall.  Finally, we see that $ V_1 = W_1 $.

For the second property (\ref{item:diagonal}), given $ x \in \C $ and $ L \in \Y(n) $ such that $ \ch(L) = x^n $, we construct $ L_{\C[[z-x]]} = L \otimes_{\C[z]} \C[[z-x]] $.  Then we see that $ \dim L_{\C[[z-x]]} / \C[[z-x]]^m = n $.  Using the obvious isomorphism $\C[[z-x]] \cong \C[[z]] $, we can regard $  L_{\C[[z-x]]}  $ as a point of $ W_n $.  Thus, we define the map $ \ch^{-1}(\A^1) \rightarrow W_n \times \A^1 $ by $ L \mapsto  (L_{\C[[z-x]]},x) $.  It is easy to see that this is an isomorphism.  A similar analysis applies for $ \Y(1^n) $.

The third property (\ref{item:disjoint}) is an application of the standard factorization property of the Beilinson-Drinfeld Grassmannian \cite[section 5.3.12]{BD}.  Let $ \Y(n)_{k,l}^\disj$ be the fibre product of $( \A^k \times \A^l )^\disj$ and $ \Y(n) $ over $ \A_n $.  We can describe this space as
$$ \Y(n)_{k,l}^\disj = \{ \C[z]^m \subset L, (\ux,\uy) : \{\ux, \uy\} = \ch(L) \}$$
We define a map
\begin{align*}
\Y(n)_{k,l}^\disj  &\rightarrow (\Y(k)\times_{\A_k} \A^k \times \Y(l) \times_{\A_l} \A^l )^\disj \\
(L, (\ux,\uy)) &\mapsto ((L_1, \ux), (L_2, \uy))
\end{align*}
where $ L_1 \in \Y(k) $ is defined by the property that $L_1/\C[z]^m $ is the direct sum, over $ i = 1, \dots, k$, of the generalized $x_i$-eigenspaces of $ z|_{L/\C[z]^m} $, and $L_2 $ is defined in a similar way using $ \uy $.
This map is an isomorphism.
\end{proof}

\begin{Corollary} \label{cor:small}
For any $\uk$ the projection map $ \Y(\uk) \rightarrow \Y(k_1 + \dots + k_n) $ is small.
\end{Corollary}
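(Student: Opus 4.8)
The plan is to reduce everything to the case $\uk=1^N$, which is handled directly by Theorem~\ref{th:factsmall} and Proposition~\ref{prop:isfact}, and then to bootstrap to arbitrary $\uk$ by factoring the projection.

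First I would treat $\uk=(1,\dots,1)=1^N$. By Proposition~\ref{prop:isfact} the diagrams $\Y(1^N)\xrightarrow{\pi}\Y(N)\times_{\A_N}\A^N\to\A^N$ form a factorizable semismall sequence, so Theorem~\ref{th:factsmall} gives that $\pi$ is small. To descend from $\Y(N)\times_{\A_N}\A^N$ to $\Y(N)$, note that the projection $q:\Y(N)\times_{\A_N}\A^N\to\Y(N)$ is the base change of the quotient map $\A^N\to\A_N=\A^N/S_N$, hence finite and surjective; in particular all fibres of $q$ are $0$-dimensional. I would then invoke the soft fact that a composition of a small morphism with a finite surjective morphism of irreducible varieties is small: one refines a stratification of $\Y(N)$ so that $q^{-1}$ of each stratum is a union of strata from $\pi$'s stratification, each mapping finite-surjectively downward; since finite surjective morphisms preserve dimension, the codimension of such an upstairs stratum equals that of the stratum below it, and hence the fibre-dimension bound for $\pi$ transfers verbatim to $q\circ\pi$. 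This shows $\Y(1^N)\to\Y(N)$ is small.

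Next, for general $\uk$ with $N=k_1+\dots+k_n$, I would factor the projection as
$$\Y(1^N)\xrightarrow{\ f\ }\Y(\uk)\xrightarrow{\ g\ }\Y(N),$$
where $f$ groups the $N$ inclusions into consecutive bunches of sizes $k_1,\dots,k_n$ and $g$ sends $L_\bullet\mapsto L_n$. The composite $g\circ f$ is exactly the projection $\Y(1^N)\to\Y(N)$, which is small by the previous paragraph. Both $f$ and $g$ are proper and surjective — surjectivity because every finite-length $\C[z]$-module has a composition series, and properness because the fibres are closed subvarieties of products of Grassmannians — and $\Y(\uk),\Y(N)$ are irreducible, being iterated $\Y(k)$-bundles over irreducible bases. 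Now fix the stratification $\Y(N)=\sqcup_\gamma C(\gamma)$ witnessing smallness of $g\circ f$. Since $f$ maps $(g\circ f)^{-1}(x)$ onto $g^{-1}(x)$, for $x\in C(\gamma)$ we get $\dim g^{-1}(x)\le\dim (g\circ f)^{-1}(x)\le\tfrac12\codim C(\gamma)$, with strict inequality on every stratum of positive codimension, while on the unique open stratum both sides vanish. Hence $g$ is small for this same stratification, which is the assertion of the corollary.

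The geometric content is already packaged inside Theorem~\ref{th:factsmall} and Proposition~\ref{prop:isfact}; what remains are the two formal reductions — preservation of smallness under composing with a finite surjective morphism, and the passage from $g\circ f$ small to $g$ small when $f$ is proper surjective. Neither is difficult, but both hinge on choosing compatible stratifications and on correctly tracking codimensions through finite and proper maps, so that bookkeeping is the main thing I would need to pin down carefully.
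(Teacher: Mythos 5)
Your proof is correct and follows exactly the paper's strategy: first establish smallness of $\Y(1^N)\to\Y(N)$ by combining Theorem~\ref{th:factsmall}/Proposition~\ref{prop:isfact} with the finite base change $\Y(N)\times_{\A_N}\A^N\to\Y(N)$, and then deduce smallness of $\Y(\uk)\to\Y(N)$ from the factorization $\Y(1^N)\to\Y(\uk)\to\Y(N)$. The only difference is that you spell out the two formal lemmas (small composed with finite is small; if $g\circ f$ is small with $f$ proper surjective then $g$ is small) which the paper simply asserts.
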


\begin{proof}
When all $ k_i = 1 $ we use Theorem \ref{th:factsmall} and Proposition \ref{prop:isfact} to conclude that $ \Y(1^n) \rightarrow \Y(n) \times_{\A_n} \A^n $ is small.  Since the composition of a small morphism and a finite morphism is small, we conclude that the morphism $ \Y(1^n) \rightarrow \Y(n) $ is small.

For a general $\uk$ consider the composition
$$ \Y(1^{k_1},\dots,1^{k_n}) \rightarrow \Y(k_1, \dots, k_n) \rightarrow \Y(k_1 + \dots + k_n). $$
Since the composition is small it follows that the right hand map is small.
\end{proof}

\section{Categorical actions}\label{sec:catactions}

In this section we define and discuss the notion of an $(L\gl_n,\t)$ action. This notion formalizes what it means to have the quantum affine algebra $U_q(\hgl_n)$ acting (at level zero) on a category. We write $L\gl_n$ instead of $L\sl_n$ just because it is convenient to label the weight spaces with $\uk \in \bZ^n$, which is identified with the weight lattice of $\gl_n$.

For $n \ge 1$ we denote by $[n]$ the quantum integer $q^{n-1} + q^{n-3} + \dots + q^{-n+3} + q^{-n+1}$. By convention $[-n] = -[n]$. If $f = \sum_a f_a q^a \in \N[q,q^{-1}]$ and $\sA$ is a 1-morphism inside a graded 2-category we write $\bigoplus_f \sA$ for the direct sum $\bigoplus_{a \in \bZ} \sA^{\oplus f_a} \la a \ra$. For example,
$$\bigoplus_{[n]} \sA = \sA \la n-1 \ra \oplus \sA \la n-3 \ra \oplus \dots \oplus \sA \la -n+3 \ra \oplus \sA \la -n+1 \ra.$$

We identify the weight lattice of $\gl_n$ with $\bZ^n$ so that $\alpha_i = (0,\dots,-1,1,\dots,0)$ for $i \in I = \{1,\dots,n-1\}$ generates the root lattice. We write $\alpha_0 := - \alpha_1 - \dots - \alpha_{n-1}$. We also denote by $\la \cdot, \cdot \ra: \bZ^n \times \bZ^n \rightarrow \Z$ the standard pairing. For convenience we write $\la i,j \ra$ for $\la \alpha_i, \alpha_j \ra$.

An $(L\gl_n,\t)$ action consists of a target 2-category $\K$ which is graded, triangulated, $\C$-linear and idempotent complete. The objects $\K(\uk)$ in $\K$ are indexed by sequences $\uk \in \bZ^n$ and equipped with
\begin{enumerate}
\item 1-morphisms: $\E_{i,\ell} \1_\uk = \1_{\uk+\alpha_i} \E_{i,\ell}$ and $\F_{i,-\ell} \1_{\uk+\alpha_i} = \1_\uk \F_{i,-\ell}$ where $i \in I$, $\ell \in \{0,1\}$ and $\1_\uk$ is the identity 1-morphism of $\K(\uk)$.
\item 2-morphisms: for each $\uk \in \bZ^n$ a linear map
$$\spn \{\alpha_i: i \in I\} \rightarrow \End^2(\1_\uk).$$
\end{enumerate}

On this data we impose the following conditions.
\begin{enumerate}
\item \label{co:hom1} $\Hom(\1_\uk, \1_\uk \la l \ra)$ is zero if $l < 0$ and one-dimensional if $l=0$ and $\1_\uk \ne 0$. Moreover, the space of maps between any two 1-morphisms is finite dimensional.
\item \label{co:adj} $\E_{i,\ell}$ and $\F_{i,-\ell}$ are left and right adjoints of each other up to specified shifts. More precisely
\begin{enumerate}
\item $(\E_{i,\ell} \1_\uk)^R \cong \1_\uk \F_{i,-\ell} \la \la \uk, \alpha_i \ra + 1 \ra$
\item $(\E_{i,\ell} \1_\uk)^L \cong \1_\uk \F_{i,-\ell} \la - \la \uk, \alpha_i \ra - 1 \ra$.
\end{enumerate}

\item \label{co:EF} For $i \in I$ and $\ell \in \{0,1\}$ we have
\begin{align*}
\E_{i,\ell} \F_{i,-\ell} \1_\uk &\cong \F_{i,-\ell} \E_{i,\ell} \1_{\uk} \bigoplus_{[\la \uk, \alpha_i \ra]} \1_\uk \ \ \text{ if } \la \uk, \alpha_i \ra \ge 0 \\
\F_{i,-\ell} \E_{i,\ell} \1_{\uk} &\cong \E_{i,\ell} \F_{i,-\ell} \1_{\uk} \bigoplus_{[- \la \uk ,\alpha_i \ra]} \1_\uk \ \ \text{ if } \la \uk, \alpha_i \ra \le 0
\end{align*}

\item \label{co:EiFj} If $i \ne j \in I$ then $\F_{j,-\ell'} \E_{i,\ell} \1_\uk \cong \E_{i,\ell} \F_{j,\ell'} \1_\uk$ for any $\ell,\ell' \in \{0,1\}$.

\item \label{co:theta}
For $i \in I$, $\ell \in \{0,1\}$ we have
$$\E_{i,\ell} \E_{i,\ell} \cong \E_{i,\ell}^{(2)} \la -1 \ra \oplus \E_{i,\ell}^{(2)} \la 1 \ra$$
for some 1-morphism $\E_{i,\ell}^{(2)}$. Moreover, if $\theta \in \spn \{\alpha_i: i \in I\}$ then $I \t I \in \End^2(\E_{i,\ell} \1_\uk \E_{i,\ell})$ induce a map between the summands $\E_{i,\ell}^{(2)} \la 1 \ra$ on either side which is
\begin{itemize}
\item nonzero if $\la \theta, \alpha_i \ra \ne 0$ and
\item zero if $\la \theta, \alpha_i \ra = 0$.
\end{itemize}

\item \label{co:triangle}
From the relations above it follows that if $\la i,j \ra = -1$ then
$$\Hom(\E_{i} \E_{j,1} \la -1 \ra \1_\uk, \E_{j,1} \E_{i} \1_\uk) \text{ and }
\Hom(\E_{j} \E_{i,1} \la -1 \ra \1_\uk, \E_{i,1} \E_{j} \1_\uk)$$
are both one-dimensional (or both zero). If $\alpha$ and $\beta$ are 2-morphisms which span these spaces, then we require the relation $\Cone(\alpha) \cong \Cone(\beta)$.

\item \label{co:vanish1} If $\alpha = \alpha_i$ or $\alpha = \alpha_i + \alpha_j$ for some $i,j \in I$ with $\la \alpha_i,\alpha_j \ra = -1$ then $\1_{\uk+r \alpha} = 0$ for $r \gg 0$ or $r \ll 0$.

\item \label{co:new} Suppose $i \ne j \in I$. If $\1_{\uk+\alpha_i}$ and $\1_{\uk+\alpha_j}$ are nonzero then $\1_\uk$ and $\1_{\uk+\alpha_i+\alpha_j}$ are also nonzero.

\end{enumerate}

\subsection{Some remarks on this definition}

\begin{itemize}
\item The concept of a $(\g,\t)$ action was introduced in \cite{C2} in order to give a simplified version of some of the earlier definitions from \cite{CR,KL,Rou}. The $(L\gl_n,\t)$ action in the current paper extends the definition from \cite{C2} to the affine case. It coincides with that appearing in \cite{CL} except that it appears at level zero rather than level one.
\item $\K$ being triangulated means that $\Hom(\K(\uk),\K(\uk'))$ is a triangulated category for any two $\uk,\uk' \in \bZ^n$. Graded means that the 1-morphisms are equipped with an auto-equivalence $\la 1 \ra$ (which should not be confused with the cohomological shift $[1]$). The grading shift $\la 1 \ra$ corresponds to multiplication by $q$ at the level of the quantum group. Moreover, $\K$ is idempotent complete if for any 2-morphism $f$ with $f^2=f$ the image of $f$ is contained in $\K$.
\item The reason we write $\alpha_0$ for $- \sum_{i=1}^{n-1} \alpha_i$ is because we are working with a level zero representation which means that $\la \uk, \alpha_0 \ra = \la \uk, - \sum_{i=1}^{n-1} \alpha_i \ra$ for any $\uk$.
\item For $\t \in \spn \{\alpha_i: i \in I\}$ we abuse notation and denote by $\t \in \End^2(\1_\uk)$ for its image under the map $\spn \{\alpha_i: i \in I\} \rightarrow \End^2(\1_\uk)$.
\item Condition (\ref{co:theta}) corresponds to the alternative (equivalent) definition of an $(\sl_n,\t)$ action from \cite[Section 13]{C2}. We prefer this definition because it is easier to check.
\item In the remainder of this paper we will have that $\K(\uk)$ is zero (i.e. $\1_\uk=0$) if and only if some $k_i < 0$. Subsequently, conditions (\ref{co:vanish1}) and (\ref{co:new}) are trivial to check.
\end{itemize}

\subsection{Affine braid group actions}\label{sec:braids}

We now explain how such an action gives rise to an action of the affine braid groupoid $\ABr_n$ on $\K$. Recall that $\ABr_n$ has objects indexed by $\bZ^n$ and morphisms
\begin{itemize}
\item $T_i \in \Hom(\uk, s_i \cdot \uk)$ for $i \in I$,
\item $\phi_\alpha \in \End(\uk)$ for $\alpha = \sum_{i \in I} a_i \alpha_i$ an element of the root lattice.
\end{itemize}
subject to the following relations
\begin{itemize}
\item $T_iT_j \cong T_jT_i$ if $\la i,j \ra = 0$,
\item $T_iT_jT_i \cong T_jT_iT_j$ if $\la i,j \ra = -1$ (the braid relation),
\item $\phi_{\alpha} \phi_{\beta} \cong \phi_\beta \phi_\alpha$ for any $\alpha,\beta$ in the root lattice,
\item $T_i \phi_{\alpha} \cong \phi_{\alpha} T_i$ if $\la \alpha_i, \alpha \ra = 0$,
\item $T_i \phi_\alpha T_i \cong \phi_{s_i \cdot \alpha}$ if $\la \alpha_i, \alpha \ra = -1$.
\end{itemize}

Consider first an $(\sl_2,\t)$ action generated by $\E$ and $\F$. Then one can define the Rickard complexes
\begin{align*}
[\dots \rightarrow \E^{(2)} \F^{(\l+2)} \la -2 \ra \rightarrow \E \F^{(\l+1)} \la -1 \ra \rightarrow \F^{(\l)}] & \text{ if } \l \ge 0, \\
[\dots \rightarrow \F^{(2)} \E^{(-\l+2)} \la -2 \ra \rightarrow \F \E^{(-\l+1)} \la -1 \ra \rightarrow \E^{(-\l)}] [-\l] \la \l \ra & \text{ if } \l \le 0
\end{align*}
to give us 1-morphisms in $\Hom(\K(k,l),\K(l,k))$, where $\l := -k+l$. Following \cite{CR,CKL2} one can show that these if $\K$ is triangulated then these complexes have a unique convolution $\T$ and that this $\T$ is invertible.

\begin{Remark}
The definition above differs from the ones in \cite{CR,CKL2} by the extra shift $[-\l] \la \l \ra$ when $\l \le 0$. This is for convenience since it will simplify some subsequent relations.
\end{Remark}

\begin{Remark}\label{rem:1}
The differentials in the Rickard complexes can be made explicit. Conveniently this is not entirely necessary since one can show \cite[Lemma 5.9]{CK3} that 
$$\Hom(\E^{(s)} \F^{(\l+s)} \la -1 \ra, \E^{(s-1)} \F^{(\l+s-1)}) \ \ \text{ and } \ \ \Hom(\E^{(s-1)} \F^{(\l+s-1)} \la -1 \ra, \E^{(s)} \F^{(\l+s)})$$
are both one-dimensional. This implies that the nonzero differentials in the Rickard complexes are unique (up to rescaling). Since rescaling leads to homotopic complexes this means that the Rickard complexes above are unique (up to isomorphism) as long as you assume the differentials are nonzero.
\end{Remark} 

Given an $(L\gl_n,\t)$ action as above we have two $\sl_2$ actions for each $i \in I$, one generated by $\E_i,\F_i$ and one by $\E_{i,1},\F_{i,-1}$. We denote the resulting equivalences $\T_i$ and $\T_{i,1}$ respectively. Following \cite{CK3} it follows that $\T_i \T_j \cong \T_j \T_i$ if $\la i,j \ra = 0$ and $\T_i \T_j \T_i \cong \T_j \T_i \T_j$ if $\la i,j \ra = -1$ (and likewise if we replace with $\T_{i,1}$ or $\T_{j,1}$).

Consider $i,j \in I$ with $\la i,j \ra = -1$. Using \cite[Lemma A.4]{C2} we have that
$$\dim \Hom(\E_i \E_{j,1} \la -1 \ra \1_\uk, \E_{j,1} \E_i \1_\uk) \le 1$$
with equality if both 1-morphisms are nonzero. We denote this nonzero map $\alpha$. This explains the existence of the first map in condition (\ref{co:triangle}). Similarly, we have a map
$$\beta: \E_j \E_{i,1} \la -1 \ra \1_\uk \rightarrow \E_{i,1} \E_j \1_\uk.$$

\begin{Remark}
Alternatively one can think of $\alpha$ as one of the generators of the quiver Hecke algebra action constructed in \cite{C2} for the $(\sl_3,\t)$ action generated by $\E_i,\F_i,\E_{j,1},\F_{j,-1}$ (and likewise for $\beta$). However, this extra structure will not be necessary for our purposes.
\end{Remark}

\begin{Lemma}\label{lem:conjugation}
If $\la i,j \ra = -1$ then $\T_i \E_{j,1} \T_i^{-1} \cong \T_j \E_{i,1} \T_j^{-1}$.
\end{Lemma}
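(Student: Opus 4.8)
The plan is to reduce the claimed isomorphism to an equality of conjugated 1-morphisms that we can recognize after applying the braid functor $\T_i$ to the defining map $\alpha$ from condition (\ref{co:triangle}). Since $\la i,j\ra = -1$, applying $s_i$ to the root $\alpha_j$ gives $s_i\cdot\alpha_j = \alpha_i+\alpha_j$, so $\T_i$ carries the weight space $\K(\uk)$ to $\K(s_i\cdot\uk)$ and the 1-morphism $\E_{j,1}\1_\uk$ to something in $\Hom(\K(s_i\cdot\uk),\K(s_i\cdot(\uk+\alpha_j)))$. First I would recall, from the $\sl_2$-theory of \cite{CKL2,CK3}, the standard conjugation formulas: for the $\sl_2$ action generated by $\E_i,\F_i$, the equivalence $\T_i$ acts on the "other" generators by $\T_i \E_{j,1}\T_i^{-1}$ sitting inside a triangle involving $\E_{j,1}$, $\E_i\E_{j,1}$ (or $\E_{j,1}\E_i$), and a grading/cohomological shift — this is exactly the content that makes the first map in (\ref{co:triangle}) meaningful. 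The upshot should be that $\T_i\E_{j,1}\T_i^{-1} \cong \Cone(\alpha')$ for a canonical map $\alpha'$ built from $\alpha$, up to shift.

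The key steps, in order: (1) Write down the Rickard complex for $\T_i$ and compute $\T_i \circ \E_{j,1}$ and $\E_{j,1}\circ \T_i$ as complexes, using relation (\ref{co:EiFj}) to commute $\F_i$-type terms past $\E_{j,1}$ and relation (\ref{co:EF}) / the $\sl_2$ relations to simplify; this is the kind of bookkeeping done in \cite[Section 5]{CK3}. (2) Identify $\T_i\E_{j,1}\T_i^{-1}$ as the convolution of a two-term complex whose differential is (a shift of) the map $\alpha: \E_i\E_{j,1}\la-1\ra\1_\uk \to \E_{j,1}\E_i\1_\uk$ — i.e. $\T_i\E_{j,1}\T_i^{-1}\1_\uk \cong \Cone(\alpha)$ up to an explicit shift depending on $\la\uk,\alpha_i\ra$. (3) Run the mirror-image computation with the roles of $i$ and $j$ exchanged to get $\T_j\E_{i,1}\T_j^{-1}\1_\uk \cong \Cone(\beta)$ up to the same shift, where $\beta: \E_j\E_{i,1}\la-1\ra\1_\uk \to \E_{i,1}\E_j\1_\uk$. (4) Invoke condition (\ref{co:triangle}) directly: it asserts precisely $\Cone(\alpha)\cong\Cone(\beta)$. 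Combining (2), (3), (4) yields $\T_i\E_{j,1}\T_i^{-1}\cong\T_j\E_{i,1}\T_j^{-1}$.

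For step (2) there is a subtlety in pinning down which cohomological/grading shift appears, and in checking that the differential of the truncated complex really is the distinguished map $\alpha$ rather than some a priori different element of the $\Hom$-space; here I would lean on Remark \ref{rem:1} and on \cite[Lemma A.4]{C2}, which guarantee these relevant $\Hom$-spaces are at most one-dimensional, so any nonzero differential is forced to be a scalar multiple of $\alpha$. One should also verify that $\alpha$ (hence the cone) is nonzero exactly when both 1-morphisms in sight are nonzero, but in our eventual application $\1_\uk = 0 \iff$ some $k_i<0$, so the degenerate cases are either symmetric under $i\leftrightarrow j$ or vacuous, and the isomorphism holds trivially there.

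The main obstacle I anticipate is step (2): correctly extracting $\T_i\E_{j,1}\T_i^{-1}$ as a two-term cone. The Rickard complex for $\T_i$ is long (it has $|\la\uk,\alpha_i\ra|+1$ terms), and after composing with $\E_{j,1}$ on both sides and conjugating, most terms must cancel; making this cancellation precise — and tracking the resulting shift uniformly in the weight — is the delicate part. Fortunately this is essentially the computation already carried out in \cite{CK3} for the finite-type braid relations $\T_i\T_j\T_i\cong\T_j\T_i\T_j$, and the conjugation statement is a known consequence of that machinery; so I expect the proof to be short, amounting to "apply \cite[Section 5]{CK3} to the $\sl_2$ action generated by $\E_i,\F_i$, together with condition (\ref{co:triangle})," with the one new input being that the map whose cone we take is the $\alpha$ (resp. $\beta$) singled out by (\ref{co:triangle}).
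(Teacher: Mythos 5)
Your proposal is correct and follows essentially the same route as the paper. The paper's proof identifies $\T_i\E_{j,1}\T_i^{-1}\cong\Cone\bigl(\E_i\E_{j,1}\la -1\ra\xrightarrow{\alpha}\E_{j,1}\E_i\bigr)$ by directly citing \cite[Lemma 5.2]{C1} (applied to the $\sl_3$ pair $\{\E_i,\E_{j,1}\}$), then uses condition (\ref{co:triangle}) to get $\Cone(\alpha)\cong\Cone(\beta)\cong\T_j\E_{i,1}\T_j^{-1}$; your steps (2)--(4) reproduce exactly this, the only difference being that you propose to re-derive the cone identification from the Rickard complex by hand (bookkeeping in the style of \cite{CK3}, with uniqueness of the differential supplied by Remark \ref{rem:1} and \cite[Lemma A.4]{C2}) rather than invoking the ready-made statement from \cite{C1}.
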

\begin{proof}
Replacing $\E_j$ with $\E_{j,1}$ in \cite[Lemma 5.2]{C1} we find that
\begin{equation}\label{eq:4}
\Cone \left( \E_i \E_{j,1} \la -1 \ra \xrightarrow{\alpha} \E_{j,1} \E_i \right) \T_i \cong \T_i \E_{j,1}
\end{equation}
where the complex on the left hand side is in cohomological degrees $-1$ and $0$. Likewise, exchanging $i$ and $j$ in (\ref{eq:4}) we get
\begin{equation}\label{eq:5}
\Cone \left(\E_j \E_{i,1} \la -1 \ra \xrightarrow{\beta} \E_{i,1} \E_j \right) \T_j \cong \T_j \E_{i,1}.
\end{equation}
Putting this together we have
\begin{align*}
\T_i \E_{j,1} \T_i^{-1}
&\cong \Cone \left(\E_i \E_{j,1} \la -1 \ra \xrightarrow{\alpha} \E_{j,1} \E_i \right) \\
&\cong \Cone \left(\E_j \E_{i,1} \la -1 \ra \xrightarrow{\beta} \E_{i,1} \E_j \right) \\
&\cong \T_j \E_{i,1} \T_j^{-1}
\end{align*}
where we used condition (\ref{co:triangle}) to get the second isomorphism.
\end{proof}

\begin{Corollary}\label{cor:conjugation}
If $\la i,j \ra = -1$ then
$$\T_i \T_{j,1} \T_i^{-1} \cong \T_j \T_{i,1} \T_j^{-1}, \ \ \text{ and } \ \ \T_{i,1} \T_i \T_{j,1} \cong \T_j \T_{i,1} \T_i.$$
\end{Corollary}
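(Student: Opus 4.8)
The plan is to derive both isomorphisms from Lemma~\ref{lem:conjugation} together with the standard properties of Rickard complexes recalled in Remark~\ref{rem:1} and \cite{CK3}. First I would record the evident $\F$-companion of Lemma~\ref{lem:conjugation}, namely $\T_i\F_{j,-1}\T_i^{-1}\cong\T_j\F_{i,-1}\T_j^{-1}$; this is proved either by running the proof of that lemma with $\F$'s in place of $\E$'s, or by passing to adjoints (by condition~(\ref{co:adj}) $\F_{j,-1}$ is an adjoint of $\E_{j,1}$ up to a grading shift, and conjugation by the equivalence $\T_i$ commutes with taking adjoints). For the first isomorphism I would then apply the triangulated equivalence $X\mapsto\T_i X\T_i^{-1}$ term by term to the Rickard complex defining $\T_{j,1}\1_\uk$. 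Being additive, this functor commutes with grading shifts, with divided powers (which are cut out of the composites $\E_{j,1}^{a}$, $\F_{j,-1}^{b}$ by canonical idempotents) and, being triangulated, with convolutions. By Lemma~\ref{lem:conjugation} and its $\F$-companion each term $\E_{j,1}^{(a)}\F_{j,-1}^{(b)}\la c\ra$ is carried to $\T_j\big(\E_{i,1}^{(a)}\F_{i,-1}^{(b)}\la c\ra\big)\T_j^{-1}$, so $\T_i\T_{j,1}\T_i^{-1}$ is a convolution of the complex obtained by applying $X\mapsto\T_j X\T_j^{-1}$ to the Rickard complex defining $\T_{i,1}$. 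That complex has nonzero differentials, and since $X\mapsto\T_j X\T_j^{-1}$ takes convolutions to convolutions, $\T_j\T_{i,1}\T_j^{-1}$ is also a convolution of it; uniqueness of such convolutions then gives $\T_i\T_{j,1}\T_i^{-1}\cong\T_j\T_{i,1}\T_j^{-1}$ --- once one checks, via $(s_js_i)^2=s_is_j$, that both sides are $1$-morphisms from $\K(s_i\cdot\uk)$ to $\K(s_is_j\cdot\uk)$ and hence comparable.

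For the second isomorphism I would first reduce to a ``mixed'' braid relation. Using the first isomorphism and the finite braid relation $\T_i\T_j\T_i\cong\T_j\T_i\T_j$ one computes
\[
\T_{i,1}\T_i\T_{j,1}\;\cong\;\T_{i,1}\big(\T_i\T_{j,1}\T_i^{-1}\big)\T_i\;\cong\;\T_{i,1}\big(\T_j\T_{i,1}\T_j^{-1}\big)\T_i\;=\;\big(\T_{i,1}\T_j\T_{i,1}\big)\T_j^{-1}\T_i,
\]
so the claim is equivalent to $\T_{i,1}\T_j\T_{i,1}\cong\T_j\T_{i,1}\T_j$. This last relation I would obtain from the symmetric form of the observation recalled just before Lemma~\ref{lem:conjugation}: when $\la i,j\ra=-1$ the $1$-morphisms $\E_j,\F_j,\E_{i,1},\F_{i,-1}$ generate an $(\sl_3,\t)$-action whose two braid generators are precisely $\T_j$ and $\T_{i,1}$, so the braid relation between them is a special case of the braid-group theorem of \cite{CK3}.

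The step I expect to be the main obstacle is the uniqueness assertion used in the first paragraph: one needs that an object realized as a convolution of a $\T_j$-conjugated Rickard complex with nonzero differentials is determined up to isomorphism, i.e. that the relevant negative $\Hom$-spaces between consecutive terms vanish. This is exactly what \cite{CK3} establishes for genuine Rickard complexes, and it transports along the equivalence $\T_j(\cdot)\T_j^{-1}$; the only real care needed is to ensure that no stray grading shift is introduced, which is where the pairing conventions in condition~(\ref{co:adj}) must be tracked honestly. The remaining ingredients --- the $\F$-companion of the lemma, the divided-power compatibility, and the weight bookkeeping --- are routine.
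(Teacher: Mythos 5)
Your proof takes essentially the same route as the paper's: for the first isomorphism you conjugate the Rickard complex for $\T_{j,1}$ term by term, transport each term via Lemma~\ref{lem:conjugation} (together with its adjoint/$\F$-version and the divided-power compatibility) into the $\T_j$-conjugated Rickard complex for $\T_{i,1}$, and invoke uniqueness from Remark~\ref{rem:1}, keeping track of the same grading/weight bookkeeping the paper flags. For the second isomorphism your algebraic rearrangement differs cosmetically, but it rests on exactly the two ingredients the paper uses — the first isomorphism and the braid relation $\T_{i,1}\T_j\T_{i,1}\cong\T_j\T_{i,1}\T_j$, which the paper likewise derives from the $(\sl_3,\t)$-action observation recalled before Lemma~\ref{lem:conjugation}.
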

\begin{proof}
Applying the isomorphism in Lemma \ref{lem:conjugation} repeatedly we find that
$$\T_i \E_{j,1}^{(r)} \T_i^{-1} \cong \T_j \E_{i,1}^{(r)} \T_j^{-1}$$
for any $r \in \N$. Taking adjoints we also get $\T_i \F_{j,-1}^{(r)} \T_i^{-1} \cong \T_j \F_{i,-1}^{(r)} \T_j^{-1}$. Thus
$$\T_i \F_{j,-1}^{(t+s)} \E_{j,1}^{(s)} \T_i^{-1} \cong \T_j \F_{i,-1}^{(t+s)} \E_{i,1}^{(s)} \T_j^{-1}$$
for any $s,t$. Since the differentials in the Rickard complexes are unique up to rescaling (see Remark \ref{rem:1}) this implies that $\T_i \T_{j,1} \T_i^{-1} \cong \T_j \T_{i,1} \T_j^{-1}$ (there is a minor issue of the $[-\l]\la \l \ra$ shift in the definition of $\T_{j,1}$ and $\T_{i,1}$ but it is easy to see the shifts agree since $\la s_i \cdot \uk, \alpha_j \ra = \la s_j \cdot \uk, \alpha_i \ra$ for any $\uk \in \bZ^n$). This proves the first isomorphism.

Now, $\T_{i,1}$ and $\T_j$ satisfy the braid relation $\T_{i,1} \T_j \T_{i,1} \cong \T_j \T_{i,1} \T_j$ for the same reason that $\T_i$ and $\T_j$ do. Thus $\T_j \T_{i,1} \T_j^{-1} \cong \T_{i,1}^{-1} \T_j \T_{i,1}$ which means $\T_i \T_{j,1} \T_i^{-1} \cong \T_{i,1}^{-1} \T_j \T_{i,1}$. This proves the second isomorphism.
\end{proof}

\begin{Proposition}\label{prop:phi}
For $i \in I$ define $\phi_i := \T_{i,1}\T_i$. Then $\phi_i \phi_j \cong \phi_j \phi_i$.
\end{Proposition}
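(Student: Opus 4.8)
The plan is to derive the relation purely formally from Corollary \ref{cor:conjugation} together with the braid relations, working inside the affine braid groupoid. First I would dispose of the easy cases: if $i=j$ there is nothing to prove, and if $\la i,j\ra = 0$ then $\T_i,\T_{i,1},\T_j,\T_{j,1}$ commute with one another (the relation $\T_i\T_j\cong\T_j\T_i$ holds equally with either index replaced by its loop counterpart, as noted just before Lemma \ref{lem:conjugation}), so $\phi_i$ and $\phi_j$ commute. Hence it remains to treat the case $\la i,j\ra = -1$.

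In that case I would first put $\phi_i\phi_j$ and $\phi_j\phi_i$ into a common form. By the second isomorphism of Corollary \ref{cor:conjugation}, $\T_{i,1}\T_i\T_{j,1}\cong\T_j\T_{i,1}\T_i$, so
$$\phi_i\phi_j = \T_{i,1}\T_i\T_{j,1}\T_j \cong \T_j\T_{i,1}\T_i\T_j .$$
Exchanging $i$ and $j$ (the hypothesis $\la i,j\ra=-1$ is symmetric) gives $\T_{j,1}\T_j\T_{i,1}\cong\T_i\T_{j,1}\T_j$, hence
$$\phi_j\phi_i = \T_{j,1}\T_j\T_{i,1}\T_i \cong \T_i\T_{j,1}\T_j\T_i .$$
So the proposition reduces to the identity $\T_j\T_{i,1}\T_i\T_j\cong\T_i\T_{j,1}\T_j\T_i$. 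For this I would use the first isomorphism of Corollary \ref{cor:conjugation} in the rearranged form $\T_i\T_{j,1}\cong\T_j\T_{i,1}\T_j^{-1}\T_i$ (legitimate since each $\T_i$ is invertible) together with the braid relation $\T_i\T_j\T_i\cong\T_j\T_i\T_j$:
$$\T_i\T_{j,1}\T_j\T_i \cong \T_j\T_{i,1}\T_j^{-1}\T_i\T_j\T_i \cong \T_j\T_{i,1}\T_j^{-1}\T_j\T_i\T_j = \T_j\T_{i,1}\T_i\T_j ,$$
which is exactly what is needed.

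There is no genuine obstacle here: the whole argument is bookkeeping with relations already in place, the substantive work being contained in Lemma \ref{lem:conjugation} and Corollary \ref{cor:conjugation}. The only point requiring a moment of care is that these are isomorphisms of $1$-morphisms in $\K$ rather than equalities, so one should check that the chain of isomorphisms composes correctly and is compatible with the inverses $\T_i^{-1}$; but this is precisely what it means for $\ABr_n$ to act on $\K$, which is already established, and the same manipulations are what will then be used to produce the operators $\phi_\alpha$ for an arbitrary element $\alpha$ of the root lattice.
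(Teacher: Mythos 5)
Your proof is correct and essentially identical to the paper's: both dispose of the case $\la i,j\ra=0$ the same way, and for $\la i,j\ra=-1$ both rely on exactly the same ingredients from Corollary \ref{cor:conjugation} together with the braid relation $\T_i\T_j\T_i\cong\T_j\T_i\T_j$. The only cosmetic difference is that you reduce $\phi_i\phi_j$ and $\phi_j\phi_i$ to a common middle term and then bridge them, whereas the paper writes one linear chain of isomorphisms from $\phi_i\phi_j$ to $\phi_j\phi_i$; the underlying moves (apply the second isomorphism of Corollary \ref{cor:conjugation}, insert $\T_j^{-1}\T_j$ via the braid relation, apply the first isomorphism, apply the second isomorphism again with $i,j$ swapped) are the same.
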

\begin{proof}
If $\la i,j \ra = 0$ this is clear since $\E_i,\E_{i,1},\F_i,\F_{i,1}$ all commute with $\E_j,\E_{j,1},\F_j,\F_{j,1}$. If $\la i,j \ra = -1$ then we have
\begin{align*}
\phi_i \phi_j
&\cong \T_{i,1} \T_i \T_{j,1} \T_j \\
&\cong \T_j \T_{i,1} \T_i \T_j \\
&\cong \T_j \T_{i,1} \T_j^{-1} \T_i \T_j \T_i \\
&\cong \T_i \T_{j,1} \T_i^{-1} \T_i \T_j \T_i \\
&\cong \T_{j,1} \T_j \T_{i,1} \T_i \\
&\cong \phi_j \phi_i
\end{align*} where we used Corollary \ref{cor:conjugation} to get the second, fourth and fifth isomorphisms.
\end{proof}

For $\alpha = \sum_{i \in I} a_i \alpha_i \in Y$ we define
$$\phi_{\alpha} := \prod_{i \in I} \phi_i^{a_i}.$$
By Proposition \ref{prop:phi} this is well defined.

\begin{Theorem}\label{thm:affinebraid}
The $\T_i, \phi_{\alpha}$ defined above give rise to an affine braid group action.
\end{Theorem}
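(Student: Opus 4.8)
The plan is to check the defining relations of the affine braid groupoid $\ABr_n$ one by one against the $\T_i$ and $\phi_\alpha$ constructed above. Several of these are already in hand: the relations $T_iT_j \cong T_jT_i$ for $\la i,j\ra = 0$ and $T_iT_jT_i \cong T_jT_iT_j$ for $\la i,j\ra = -1$ follow from the discussion preceding Lemma~\ref{lem:conjugation} (citing \cite{CK3}), and the relation $\phi_\alpha \phi_\beta \cong \phi_\beta \phi_\alpha$ is exactly Proposition~\ref{prop:phi} together with the definition of $\phi_\alpha$ as a product of the commuting $\phi_i$. So what remains is to verify the two mixed relations: $T_i \phi_\alpha \cong \phi_\alpha T_i$ when $\la \alpha_i, \alpha\ra = 0$, and $T_i \phi_\alpha T_i \cong \phi_{s_i\cdot\alpha}$ when $\la \alpha_i,\alpha\ra = -1$.

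For the commuting relation, since $\phi_\alpha$ is a product of $\phi_j = \T_{j,1}\T_j$, it suffices to show $\T_i \phi_j \cong \phi_j \T_i$ whenever $\la i,j\ra = 0$; but in that case $\E_j,\E_{j,1},\F_j,\F_{j,-1}$ all commute with $\E_i,\F_i$ (and hence with the Rickard complex $\T_i$), so this is immediate. For the conjugation relation it is enough, by multiplicativity of $\phi$, to treat the generators $\alpha = \alpha_j$ with $\la \alpha_i,\alpha_j\ra = -1$, i.e.\ to show $\T_i \phi_j \T_i \cong \phi_{s_i \cdot \alpha_j}$. Here $s_i \cdot \alpha_j = \alpha_i + \alpha_j$, so $\phi_{s_i\cdot\alpha_j} = \phi_i \phi_j$ (using Proposition~\ref{prop:phi}), and the goal becomes $\T_i \T_{j,1}\T_j\, \T_i \cong \T_{i,1}\T_i\T_{j,1}\T_j$. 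I expect this to follow by the same kind of manipulation used in the proof of Proposition~\ref{prop:phi}: write $\T_i\T_{j,1}\T_j\T_i = (\T_i\T_{j,1}\T_i^{-1})(\T_i\T_j\T_i)$, apply Lemma~\ref{lem:conjugation}/Corollary~\ref{cor:conjugation} to rewrite $\T_i\T_{j,1}\T_i^{-1} \cong \T_j\T_{i,1}\T_j^{-1}$, use the braid relation $\T_i\T_j\T_i \cong \T_j\T_i\T_j$, and then reassemble and apply the second isomorphism of Corollary~\ref{cor:conjugation} ($\T_{i,1}\T_i\T_{j,1} \cong \T_j\T_{i,1}\T_i$) to land on $\T_{i,1}\T_i\T_{j,1}\T_j = \phi_i\phi_j$. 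One should double-check the direction: it may be cleaner to verify the equivalent form $\phi_i^{-1}\T_i\phi_j \cong \T_i\phi_j\T_i^{-1}\phi_i^{-1}\cdots$, but morally these are all bracketings of the braid relations among $\T_i,\T_j,\T_{i,1},\T_{j,1}$ supplied by Corollary~\ref{cor:conjugation}.

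\textbf{Main obstacle.} The only subtlety I anticipate is bookkeeping with the grading and cohomological shifts $[-\l]\la\l\ra$ built into the definitions of $\T_{i,1}$ and $\T_{j,1}$: one must confirm that all the shift contributions cancel in the composite isomorphisms, exactly as was done parenthetically in the proof of Corollary~\ref{cor:conjugation} (where the key input was $\la s_i\cdot\uk,\alpha_j\ra = \la s_j\cdot\uk,\alpha_i\ra$). Since the relations we are checking are all ``conjugation'' type equalities of the form $XYX^{-1}\cong Z$ or $XYX \cong XY'X$ with matching weight data on both sides, the shifts should match automatically, but this should be stated explicitly. Beyond that, the proof is a formal consequence of Proposition~\ref{prop:phi}, Lemma~\ref{lem:conjugation}, Corollary~\ref{cor:conjugation}, and the braid relations among the $\T_i$ from \cite{CK3}, so I would present it as a short verification of the $\ABr_n$ relations rather than anything requiring new geometric input.
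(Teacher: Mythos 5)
There is a genuine gap in your reduction step. For the commuting relation, you claim ``it suffices to show $\T_i \phi_j \cong \phi_j \T_i$ whenever $\la i,j\ra = 0$,'' arguing that $\phi_\alpha$ is a product of $\phi_j$'s. But the hypothesis $\la \alpha, \alpha_i \ra = 0$ does \emph{not} imply that $\alpha$ is supported only on simple roots $\alpha_j$ orthogonal to $\alpha_i$. For instance, $\alpha = 2\alpha_1 + \alpha_2$ satisfies $\la \alpha, \alpha_2 \ra = 0$, yet $\T_2$ commutes with neither $\phi_1$ nor $\phi_2$ individually. Your factor-by-factor argument therefore does not establish $\T_i \phi_\alpha \cong \phi_\alpha \T_i$ in the generality required. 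The same objection applies to your reduction of the conjugation relation to the case $\alpha = \alpha_j$: if $\la \alpha, \alpha_i \ra = -1$, $\alpha$ need not be a simple root (e.g.\ $\alpha = \alpha_1 + \alpha_2$ with $i=3$).

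The paper's proof handles exactly this issue. After the easy orthogonal case, it uses the relation $\T_j \phi_i \T_j \cong \phi_i \phi_j$ (which you do have, via Corollary~\ref{cor:conjugation}) to deduce $\T_j \phi_i^2 \phi_j \cong \phi_i^2 \phi_j \T_j$ — that is, $\T_j$ commutes with $\phi_{2\alpha_i + \alpha_j}$ even though it commutes with neither $\phi_i$ nor $\phi_j$ alone. Combined with the genuinely orthogonal factors, this is enough to get $\T_j \phi_\alpha \cong \phi_\alpha \T_j$ for arbitrary $\alpha$ with $\la \alpha, \alpha_j \ra = 0$. The conjugation relation is then derived by writing $\alpha = (\alpha - \alpha_i) + \alpha_i$ with $\la i,j\ra = -1$, so that $\la \alpha - \alpha_i, \alpha_j \ra = 0$, commuting $\T_j$ past $\phi_{\alpha - \alpha_i}$, and applying $\T_j \phi_i \T_j \cong \phi_i \phi_j$. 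To repair your argument you would need to replace your ``reduce to generators'' step with this commutation-for-general-$\alpha$ argument and then deduce the conjugation relation from it; the ingredients (Corollary~\ref{cor:conjugation}, Proposition~\ref{prop:phi}) are the right ones, but the order and the intermediate claim $\T_j \phi_i^2 \phi_j \cong \phi_i^2 \phi_j \T_j$ are essential.
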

\begin{proof}
First note that $\T_j \phi_i \cong \phi_i \T_j$ if $\la i,j \ra = 0$ for obvious reasons. Moreover, using Corollary \ref{cor:conjugation}, if $\la i,j \ra = -1$ then
$$\T_j \phi_i \T_j \cong \T_j \T_{i,1} \T_i \T_j \cong \T_{i,1} \T_i \T_{j,1} \T_j \cong \phi_i \phi_j.$$
This also implies that
$$\T_j \phi_i^2 \phi_j \cong \T_j \phi_i \T_j \phi_i \T_j \cong \phi_i^2 \phi_j \T_j.$$
It is an easy exercise to see that this implies $\T_j \phi_\alpha \cong \phi_\alpha \T_j$ if $\la \alpha, \alpha_j \ra = 0$. Moreover, if $\la \alpha, \alpha_j \ra = -1$ then $$\T_j \phi_\alpha \T_j \cong \T_j \phi_{\alpha-\alpha_i} \phi_i \T_j \cong \phi_{\alpha-\alpha_i} \T_j \phi_i \T_j \cong \phi_{\alpha+\alpha_j} \cong \phi_{s_j \cdot \alpha}$$
where the second isomorphism follows since $\la \alpha-\alpha_i, \alpha_j \ra = 0$.
\end{proof}

\begin{Remark}
Note that one gets a finite braid group action on both $\Kom(\K)$ and $\K$. However, this action extends to an affine one only on $\K$. In particular, in the proof above one needs to apply relation (\ref{co:triangle}) which requires using the triangulated structure of $\K$ and not just working in the homotopy category.
\end{Remark}

The following result which is conceptually helpful.

\begin{Lemma}\label{lem:phiE}
If $\la \alpha, \alpha_i \ra = 0$ then $\E_i \phi_\alpha \cong \phi_\alpha \E_i$. If $\la \alpha, \alpha_i \ra = -1$ then $\phi_\alpha \E_{i,1} \cong \E_i \phi_\alpha$.
\end{Lemma}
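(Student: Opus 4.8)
Here is how I would attack the statement.

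\smallskip

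The lemma asserts that conjugation by $\phi_\alpha$ carries $\E_i$ to $\E_i$ in the first case and $\E_{i,1}$ to $\E_i$ in the second, in both cases with no grading shift. I would reduce everything to conjugation by a \emph{single} $\phi_j$. When $\la i,j\ra=0$ there is nothing to do: every $1$- and $2$-morphism of index $j$, in particular $\T_j,\T_{j,1}$ and hence $\phi_j$, commutes with $\E_i$ and with $\E_{i,1}$. When $\la i,j\ra=-1$ the assertion to establish is
\[
\phi_j\,\E_{i,1}\,\phi_j^{-1}\cong\E_i,\qquad\text{equivalently}\qquad \E_i\,\phi_j\cong\phi_j\,\E_{i,1}.
\]
Granting this, the general statement follows by peeling off the factors of $\phi_\alpha=\prod_j\phi_j^{a_j}$ one at a time (they commute, by Proposition \ref{prop:phi}): at each stage the running $1$-morphism is some $\E_{i,\ell}$ with $\ell$ the accumulated loop shift, and only the single-factor moves above and the trivial commutations are used. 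The point requiring care is that the running $\ell$ stay in $\{0,1\}$, so that one never writes down an undefined generator; this can be arranged by first conjugating by finite braid elements $\T_j$ with $\la i,j\ra=0$, which commute with $\E_i,\E_{i,1}$ and send $\phi_\alpha$ to $\phi_{s_j(\alpha)}$ (Theorem \ref{thm:affinebraid}) while fixing $\la\alpha,\alpha_i\ra$, in order to bring $\alpha$ into a form admitting a legal order of peeling.

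\smallskip

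For the key assertion, unwind $\phi_j=\T_{j,1}\T_j$. Lemma \ref{lem:conjugation} gives $\T_j\,\E_{i,1}\,\T_j^{-1}\cong\T_i\,\E_{j,1}\,\T_i^{-1}$, hence $\phi_j\,\E_{i,1}\,\phi_j^{-1}\cong\T_{j,1}\,\T_i\,\E_{j,1}\,\T_i^{-1}\,\T_{j,1}^{-1}$. Since $\T_{j,1}$ and $\T_i$ satisfy the braid relation (observed in the proof of Corollary \ref{cor:conjugation}) one has $\T_{j,1}\T_i\cong\T_i\T_{j,1}\T_i\T_{j,1}^{-1}$; substituting, and using that $\T_{j,1}$ interchanges $\E_{j,1}$ and $\F_{j,-1}$ up to the shift prescribed by (\ref{co:adj}), this becomes $\T_i\,\T_{j,1}\big(\T_i\,\F_{j,-1}\,\T_i^{-1}\big)\T_{j,1}^{-1}\,\T_i^{-1}$ up to a shift. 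Now $\T_i\,\F_{j,-1}\,\T_i^{-1}$ is a two-term complex by the adjoint of \eqref{eq:4}, and iterating the ``$\T$ conjugates a generator to a two-term cone'' formulas — \eqref{eq:4}, \eqref{eq:5} and their analogues from \cite[Lemma 5.2]{C1} for the $(\sl_3,\t)$-subactions generated by $\{\E_i,\E_{j,1}\}$, $\{\E_{i,1},\E_{j,1}\}$, $\{\E_i,\E_j\}$ — together with the $\E\F$-relation (\ref{co:EF}) to kill the cross-terms, the nested cones telescope to $\E_i$. A final check that the shifts cancel (using $\la s_i\cdot\uk,\alpha_j\ra=\la s_j\cdot\uk,\alpha_i\ra$, as in Corollary \ref{cor:conjugation}) gives the result with no shift.

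\smallskip

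I expect the collapse of the nested cones to be the main obstacle: one must track which $\Hom$-spaces are one-dimensional, so that the relevant differentials are forced (as in Remark \ref{rem:1}), and which composites such as $\E_{j,1}\F_{j,-1}$ split off copies of the identity via (\ref{co:EF}), and then verify that no spurious summand survives. A secondary difficulty is the normal-form bookkeeping in the reduction step, ensuring the order of peeling never exits $\ell\in\{0,1\}$. A possibly cleaner route to the key assertion is available: Corollary \ref{cor:conjugation} gives $\phi_j\,\T_{i,1}\,\phi_j^{-1}\cong\T_i$, and comparing these Rickard complexes term by term on the weight spaces where the bottom term is $\F_{i,-1}$ (resp.\ $\F_i$) — which is legitimate since conjugation by $\phi_j$ is an equivalence and therefore commutes with convolution — one reads off $\phi_j\,\F_{i,-1}\,\phi_j^{-1}\cong\F_i$; then $\phi_j\,\E_{i,1}\,\phi_j^{-1}\cong\E_i$ follows by the adjunction (\ref{co:adj}).
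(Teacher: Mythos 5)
Your overall plan — reduce to conjugation by a single $\phi_j$, use trivial commutation when $\la i,j\ra=0$, and establish $\phi_j\E_{i,1}\phi_j^{-1}\cong\E_i$ when $\la i,j\ra=-1$ — matches the paper's, and the first reduction to $\T_{j,1}\T_i\E_{j,1}\T_i^{-1}\T_{j,1}^{-1}$ via Lemma \ref{lem:conjugation} is exactly what the paper does. But from that point the paper finishes in one line: since $\{\E_i,\E_{j,1}\}$ is an $\sl_3$ pair with braid operators $\T_i,\T_{j,1}$, the identity $\T_{j,1}\T_i\E_{j,1}\T_i^{-1}\T_{j,1}^{-1}\cong\E_i$ is a direct instance of \cite[Lemma 5.2]{C1} (the categorical analogue of $T_{s_2s_1}(E_2)=E_1$ in $\sl_3$). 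The elaborate nested-cone telescoping you propose instead — braid-commuting $\T_{j,1}$ past $\T_i$, invoking adjunction to turn $\E_{j,1}$ into $\F_{j,-1}$, then iterating cone formulas and killing cross-terms with (\ref{co:EF}) — is not only unnecessary but is never actually carried out; you acknowledge yourself that the collapse ``is the main obstacle.'' As stated it is not a proof. (Also, $\{\E_{i,1},\E_{j,1}\}$ is not an $\sl_3$ pair in the relevant sense, since the degree-$2$ commutator falls outside the generating data, so the cited analogue of \cite[Lemma 5.2]{C1} does not apply to it.) Your alternative route via $\phi_j\T_{i,1}\phi_j^{-1}\cong\T_i$ and a ``term-by-term'' comparison of Rickard complexes is an appealing idea but also incomplete: these are convolutions, not complexes, so an isomorphism of convolutions does not by itself produce termwise isomorphisms, and the weight-space bookkeeping needed to extract $\phi_j\F_{i,-1}\phi_j^{-1}\cong\F_i$ is not supplied.

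The second genuine gap is in the peeling argument. When $\alpha$ has a nonzero $\alpha_i$-component, peeling off a factor of $\phi_i$ would require knowing $\phi_i\E_{i,\ell}\phi_i^{-1}$, which would leave the range $\ell\in\{0,1\}$ (it should be $\E_{i,\ell+2}$ in loop-algebra terms). Your proposed remedy — preconjugating by $\T_k$ with $\la i,k\ra=0$ — cannot address this: such $\T_k$ commute with $\phi_i$ and fix the $\alpha_i$-coefficient of $\alpha$ (since $s_k(\alpha_i)=\alpha_i$), so the problematic $\phi_i$ factor survives unchanged. The paper handles it differently: it directly proves the additional commutation $\E_i\phi_j^2\phi_i\cong\phi_j^2\phi_i\E_i$ (note $\la 2\alpha_j+\alpha_i,\alpha_i\ra=0$), by a short chain of manipulations that again reduces to the $\sl_3$ conjugation facts from \cite[Lemma 5.2]{C1} applied to the pairs $\{\E_{i,1},\E_j\}$ and $\{\E_i,\E_j\}$, together with $\T_j\phi_i\T_j\cong\phi_j\phi_i$ from Theorem \ref{thm:affinebraid}. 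With that extra identity in hand, the $\phi_i$-factor can be absorbed into a combination of the form $\phi_j^2\phi_i$, and the remaining peeling uses only the two basic single-factor moves. Without something playing the role of this extra computation, your reduction does not close.
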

\begin{proof}
If $\la i,j \ra = 0$ then $\E_i$ commutes with $\phi_j$ for obvious reasons.

Now suppose $\la i,j \ra = -1$. Then by Lemma \ref{lem:conjugation} we have
$$\phi_j \E_{i,1} \phi_j^{-1} \cong \T_{j,1} \T_j \E_{i,1} \T_j^{-1} \T_{j,1}^{-1} \cong \T_{j,1} \T_i \E_{j,1} \T_i^{-1} \T_{j,1}^{-1}.$$
The right most term above equals $\E_i$ by applying \cite[Lemma 5.2]{C1} to $\{ \E_i, \E_{j,1} \}$ which forms an $\sl_3$ pair. So we get $\phi_j \E_{i,1} \cong \E_i \phi_j$.

On the other hand, still assuming $\la i,j \ra = -1$, we get
\begin{align*}
\E_i \phi_j^2 \phi_i &\cong \phi_j \E_{i,1} \phi_j \phi_i \cong \phi_j \E_{i,1} \T_j \phi_i \T_j \cong \phi_j \E_{i,1} \T_j \T_{i,1} \T_i \T_j \\
& \cong \phi_j \T_j \T_{i,1} \E_j \T_i \T_j \cong \phi_j \T_j \T_{i,1} \T_i \T_j \E_i \cong \phi_j^2 \phi_i \E_i.
\end{align*}
The result follows.
\end{proof}

Finally, we define the shifted complexes $\T_i' \1_\uk := \T_i \1_\uk [-k_i] \la k_i \ra$. These complexes have the nice property that they are shifted precisely so that they lie in positive cohomological degrees, starting from degree zero. They also appeared, for instance, in \cite{C1} for the purposes of defining homological knot invariants. It is easy to check that they still braid. They will only play a role in Sections \ref{sec:shift}, \ref{sec:equiv} and \ref{sec:skewhowe}. One also defines the corresponding $\phi'_i := \T'_{i,1} \T'_i$.

\section{The main result}\label{sec:main}

We take $\tK^{n,N}_{\GrBD,m}$ to be the triangulated 2-category whose nonzero objects are indexed by sequences $\uk \in \bZ^n$ with $\sum_i k_i = N$, the 1-morphisms are kernels inside $D(\Y(\uk) \times \Y(\uk'))$ and the 2-morphisms are maps between kernels.

For $i \in I$, $\ell \in \{0,1\}$ we define kernels
\begin{align*}
\sE_{i,\ell} \1_\uk &:= \O_{\Y^1_i(\uk)} \otimes \det(\sV_i/\sV_i')^{\otimes -\ell} [i \ell] \{k_i-1-i \ell\} \in D(\Y(\uk) \times \Y(\uk+\alpha_i)) \\
\1_\uk \sF_{i,-\ell} &:= \O_{\Y^1_i(\uk)} \otimes \det(\sV'_i/\sV_i)^{\otimes \ell} [-i \ell] \{k_{i+1}+i \ell \} \in D(\Y(\uk+\alpha_i) \times \Y(\uk))
\end{align*}
To define the linear map $\spn \{\alpha_i: i \in I\} \rightarrow \End^2(\1_\uk)$ recall that we have the map
$$\ch: \Y(\uk) \rightarrow \A_{k_1} \times \dots \times \A_{k_n}$$
where $\A_{k_i} = \Spec \C[e_1^{(i)}, \dots, e_{k_i}^{(i)}]$. Then $\alpha_i$ acts by multiplication by $- e_1^{(i)} + e_1^{(i+1)}$ and extend this action linearly to all of $\spn \{\alpha_i: i \in I\}$.

\begin{Theorem}\label{thm:main}
The data above defines an $(L\gl_n,\theta)$ action on $\tK^{n,N}_{\GrBD,m}$.
\end{Theorem}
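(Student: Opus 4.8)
The plan is to verify the eight conditions (\ref{co:hom1})--(\ref{co:new}) in the definition of an $(L\gl_n,\t)$ action one at a time for the explicit kernels $\sE_{i,\ell}, \sF_{i,-\ell}$ and the $\End^2$-action given by multiplication by $-e_1^{(i)}+e_1^{(i+1)}$. Since $\K(\uk)=0$ exactly when some $k_i<0$, conditions (\ref{co:vanish1}) and (\ref{co:new}) are immediate, as the remarks already note. The strategy throughout is to exploit the bundle structure from Section \ref{sec:varieties}: $\Y(\uk)$ is an iterated $\Y(k)$-bundle, and the correspondences $\Y^r_i(\uk)$ only modify the data in positions $i,i+1$, so all the key geometric computations reduce to the rank-one case, i.e.\ to $\Y(k_i,k_{i+1})$ and its correspondence with $\Y(k_i-1,k_{i+1}+1)$. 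This parallels \cite{CKL1,CK3,C1} on the skew side, with the crucial difference being the bookkeeping of equivariant versus cohomological shifts: one has to check that the $\{1\}$-shifts in the definitions of $\sE_{i,\ell},\sF_{i,-\ell}$ are exactly what the axioms (\ref{co:adj})--(\ref{co:EF}) demand, using that $\ch:\Y(k)\to\A_k$ makes $\{2\}$ behave like multiplication by a degree-two element rather than a homological shift.

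Concretely, I would proceed as follows. Condition (\ref{co:hom1}): compute $\Hom(\O_{\Y(\uk)},\O_{\Y(\uk)}\la l\ra)$; since $\Y(\uk)$ is smooth with a $\Y(k)$-iterated-bundle structure and $\Y(k)$ maps properly to affine space $\A_k$ with connected fibres, the global functions pulled back are the obvious ones and one gets the required vanishing/one-dimensionality. Condition (\ref{co:adj}): use the formula $\sP^R=\sP^\vee\otimes\pi_2^*\omega_X[\dim X]$ from the notation section applied to $\sE_{i,\ell}\1_\uk$ supported on $\Y^1_i(\uk)\cong\Y(\ul)$; computing the relative dualizing sheaf of $\Y^1_i(\uk)$ inside $\Y(\uk)\times\Y(\uk+\alpha_i)$ (again a rank-one computation on $\Y(k_i,k_{i+1})$) and matching determinant twists $\det(\sV_i/\sV_i')$ against the shifts $\la\la\uk,\alpha_i\ra+1\ra$ gives (a) and (b). The sign conventions in the exponents $[i\ell]$ and $\{k_i-1-i\ell\}$ are designed precisely for this. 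Conditions (\ref{co:EF}) and (\ref{co:EiFj}): convolve the kernels and analyze the fibre product $\Y^1_i(\uk)\times_{\Y(\uk')}\Y^1_i(\uk')$; for $i\ne j$ the modifications are in disjoint positions so the correspondences are transverse and the kernels literally commute, giving (\ref{co:EiFj}); for the $i=i$ case one gets a blow-up/excess-intersection computation producing the direct sum $\bigoplus_{[\la\uk,\alpha_i\ra]}\1_\uk$ exactly as in \cite{CKL1}, with the quantum integer realized geometrically via the cohomology of a projective space (here a $\P^{|\la\uk,\alpha_i\ra|-1}$-bundle), and the grading $\{1\}$'s tracking $\la\cdot\ra$'s because of the $\Cx$-action $t\cdot z^k=t^{2k}z^k$.

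Condition (\ref{co:theta}): $\sE_{i,\ell}^2$ is a kernel on a correspondence that is a $\P^1$-bundle over $\Y^2_i(\uk)$, giving the divided-power splitting $\sE_{i,\ell}^{(2)}\la-1\ra\oplus\sE_{i,\ell}^{(2)}\la1\ra$; the nilHecke-type $\End^2$ class then acts on this bundle, and one checks that multiplication by $-e_1^{(i)}+e_1^{(i+1)}$ (the image of $\alpha_i$) moves the top summand nontrivially while elements $\t$ pairing to zero with $\alpha_i$ do not --- this is a local calculation with the tautological classes on the $\P^1$-bundle, essentially the statement that $e_1$ restricted to the $\Y(2)$-fibre generates the relevant piece of cohomology. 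Condition (\ref{co:triangle}) I would handle formally: it is a statement purely in terms of the already-established $(\sl_n,\t)$-action data, so as in \cite{C2,C1} the one-dimensionality of the two $\Hom$-spaces follows from conditions (\ref{co:hom1})--(\ref{co:theta}), and the isomorphism of cones follows from the uniqueness of the relevant maps together with the categorical relations, without further geometry. The main obstacle is condition (\ref{co:EF}) together with the precise grading-shift bookkeeping in (\ref{co:adj}): getting the excess-intersection/base-change computation right on the singular-looking fibre products (even though $\Y(\uk)$ itself is smooth by Corollary \ref{cor:smooth}, the fibre products of correspondences need not be), and making sure that the purely-equivariant shift $\{1\}$ plays the role that $[1]\{-1\}$ played on the skew side, is where all the real work lies. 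Everything else is either formal (reduction to rank one, then citing \cite{CKL1,CK3,C1}) or trivial (conditions (\ref{co:vanish1}),(\ref{co:new})).
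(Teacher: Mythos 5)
Your overall plan (verify the axioms one at a time, reduce to the two adjacent positions $i,i+1$) matches the paper's structure, and your treatment of conditions (\ref{co:adj}), (\ref{co:EiFj}), (\ref{co:hom1}), (\ref{co:vanish1}), (\ref{co:new}) is in the right spirit. But there are two places where the proposal would fail as written.

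First, your argument for condition (\ref{co:EF}) relies on the picture ``exactly as in \cite{CKL1}'': an excess-intersection computation in which the quantum integer is realized via the cohomology of a projective-space bundle. That picture is correct on the skew side, where the maps $Y(\uk)\to Y(\uk')$ are iterated Grassmannian bundles, but it is simply false here. The projection $\pi:\Y(k,1)\to\Y(k+1)$ is \emph{generically finite}, not a $\P^k$-bundle (this is the content of Corollary \ref{cor:small}: these maps are small, hence finite over an open dense set). The quantum integer $[k+1]$ instead comes from the algebraic fact that $\C[x,e_1,\dots,e_k]$ is a free $\C[e_1,\dots,e_{k+1}]$-module of rank $k+1$, which is what underlies Proposition \ref{prop:main}. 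Establishing Proposition \ref{prop:main} itself requires some real work (one must show the normalization defect $\pi_{2*}Q$ vanishes via a torsion argument after a Serre-duality comparison), and without it you do not get the splitting. Moreover, even granting that proposition, the paper does not compute $\sE_i\F_i$ by a direct fibre-product analysis; it runs an induction on $k_i+k_{i+1}$ using the ``insertion of a zero'' trick ($\uk\leftrightarrow\uk'=(\dots,k_i,0,k_{i+1},\dots)$, so $\sE_i\1_\uk\cong(\sE_{i+1}*\sE_i)\1_{\uk'}$) together with Krull--Schmidt. A direct excess-intersection approach would run into the problem that the relevant fibre products need not be of expected dimension when $k_i,k_{i+1}$ are both large.

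Second, you cannot handle condition (\ref{co:triangle}) formally. The one-dimensionality of the two $\Hom$-spaces does follow formally (as the paper notes), but the isomorphism $\Cone(\alpha)\cong\Cone(\beta)$ is a \emph{genuine additional relation}: $\alpha$ and $\beta$ are maps between different pairs of objects, and uniqueness up to scalar of each map individually says nothing about whether their cones agree. Indeed, Section \ref{sec:shift} shows that with an innocent-looking change of normalization one gets $\Cone(\alpha)\cong\Cone(\beta)[-1]\{1\}$ instead, so the relation is sensitive to the geometry and the precise twists $[i\ell]\{-i\ell\}$ in the definitions of $\sE_{i,1}$ and $\sF_{i,-1}$. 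The paper verifies the relation geometrically (Proposition \ref{prop:triangle}) by resolving the reducible variety $V_1\cup V_1'$ in two different ways via ideal-sheaf exact triangles and identifying both cones with the same sheaf $\O_{V_1'}\otimes\det(\sV_j/\sV_j')^{-1}[j]\{k_i+k_{i+1}-j\}$. This step is not avoidable.
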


In the remainder of this section we will prove this Theorem.

\subsection{Some preliminaries}

\begin{Lemma}\label{lem:dualizing}
Consider the projection
$$\pi: \Y(\dots, k_i,k_{i+1}, \dots) \rightarrow \Y(\dots,k_i+k_{i+1},\dots)$$
which forgets $L_i$. Then the relative dualizing sheaf of $\pi$ is $\omega_\pi \cong \O_{\Y(\uk)} \{2k_i k_{i+1}\}$.
\end{Lemma}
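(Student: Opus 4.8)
\emph{Plan.} My plan is to reduce the statement to a two-step forgetful map $\bar\pi\colon\Y(k,l)\to\Y(k+l)$, and then run a Riemann--Hurwitz computation in which the ramification is cut out by a resultant whose $\C^\times$-weight is exactly $2k_ik_{i+1}$.

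To carry out the reduction, I would first note that the forgetful maps $\Y(k_1,\dots,k_n)\to\Y(k_1,\dots,k_{i-1})$ and $\Y(\dots,k_i+k_{i+1},\dots)\to\Y(k_1,\dots,k_{i-1})$ (dropping all lattices above $L_{i-1}$) are Zariski-locally trivial fibre bundles, hence smooth, and are compatible with $\pi$; thus $\omega_\pi$ is computed relative to this common base, and fibrewise we are reduced to the case $i=1$. Dropping the tail $L_3,\dots,L_n$ then gives a commutative square
$$\xymatrix{
\Y(k_1,k_2,k_3,\dots,k_n)\ar[r]^{\pi}\ar[d] & \Y(k_1+k_2,k_3,\dots,k_n)\ar[d]\\
\Y(k_1,k_2)\ar[r]^{\bar\pi} & \Y(k_1+k_2)
}$$
with smooth fibre-bundle vertical maps, and this square is Cartesian: a point of the fibre product is a chain $L_0\subset L_1\subset L_2\subset L_2'\subset\dots$ together with the sub-chain $L_0\subset L_2$, i.e. exactly a point of the upper-left variety. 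Since relative dualizing sheaves base change along flat maps, $\omega_\pi$ is pulled back from $\omega_{\bar\pi}$. So it suffices to prove $\omega_{\bar\pi}\cong\O_{\Y(k,l)}\{2kl\}$ for $\bar\pi\colon\Y(k,l)\to\Y(k+l)$, $(L_0\subset L_1\subset L_2)\mapsto(L_0\subset L_2)$.

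For the two-step case I would proceed as follows. The map $\bar\pi$ is proper, and over the open locus of $\Y(k+l)$ where $z$ acts on $L_2/L_0$ with pairwise distinct eigenvalues it is finite étale of degree $\binom{k+l}{k}$ (the fibre being the set of sums of $k$ of the eigenlines). By Corollary \ref{cor:small} the morphism $\bar\pi$ is small, so the locus $Z\subset\Y(k,l)$ over which $\bar\pi$ fails to be finite has codimension $\geq 2$; on its complement $V$ the restriction of $\bar\pi$ is finite and flat between smooth varieties, and $\omega_{\bar\pi}|_V\cong\O_V(R)$ with $R$ the ramification (different) divisor. Now the tangent space to a fibre of $\bar\pi$ at $(L_0\subset L_1\subset L_2)$ is $\Hom_{\C[z]}(L_1/L_0,L_2/L_1)$, which vanishes precisely when the characteristic polynomials of $z|_{L_1/L_0}$ and $z|_{L_2/L_1}$ are coprime; hence, on $V$, $\bar\pi$ is unramified exactly off the vanishing of the resultant
$$\mathrm{Res}\ :=\ \prod_{a,b}(\lambda_a-\mu_b)\ \in\ \O(\Y(k,l))$$
of those two polynomials, where $\lambda_a$ (resp. $\mu_b$) are the eigenvalues of $z$ on $\sV_1$ (resp. $\sV_2/\sV_1$), so that $\mathrm{Res}$ is a polynomial in the elementary symmetric functions of each family and is globally defined. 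A local analysis at the generic point of each component of $\{\mathrm{Res}=0\}$ — where exactly one pair of eigenvalues collides and $\bar\pi$ is, up to an étale factor, the simply branched double cover $\A^2\to\A^2/S_2$ near the diagonal — shows that the ramification index there is $2$ and that $\mathrm{Res}$ vanishes to order one, so $R=\mathrm{div}(\mathrm{Res})$ on $V$. Since $z$ has $\C^\times$-weight $2$, each $\lambda_a-\mu_b$ has weight $2$ and $\mathrm{Res}$ is a semi-invariant of weight $2kl$, so multiplication by $\mathrm{Res}$ is an equivariant isomorphism $\O_V\{-2kl\}\xrightarrow{\sim}\O_V(-R)$, whence $\omega_{\bar\pi}|_V=\O_V(R)\cong\O_V\{2kl\}$. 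As $\Y(k,l)$ is smooth and $Z$ has codimension $\geq 2$, this isomorphism extends uniquely over $\Y(k,l)$, giving $\omega_{\bar\pi}\cong\O_{\Y(k,l)}\{2kl\}$.

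The step I expect to be the main obstacle is the identification $R=\mathrm{div}(\mathrm{Res})$, i.e. pinning down the ramification multiplicities; this is the only point where one must analyse the local geometry of $\bar\pi$ rather than argue formally. A possible alternative, bypassing the multiplicity count, is to compute $\omega_{\Y(k)}$ directly from the Mirkovi\'c--Vybornov picture (the open set $\X(k)_0$ is a $GL_m$-equivariant affine-space bundle over $\G(m-r,m)$ by Theorem \ref{th:MViso}, with complement of codimension $\geq 2$), obtaining $\omega_{\Y(k)}\cong\det(\sV)^{\otimes-m}\{-k(k+1)\}$ with the twist fixed by restriction to a single $\C^\times$-fixed point; the iterated $\Y(l)$-bundle structure of $\Y(k,l)\to\Y(k)$ then gives $\omega_{\bar\pi}\cong\O\{-k(k+1)-l(l+1)+(k+l)(k+l+1)\}=\O\{2kl\}$, the $\det\sV$ factors cancelling.
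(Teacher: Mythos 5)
Your proposal is correct and is essentially the paper's proof, with the key identification made more explicit. The paper also reduces to $\bar\pi\colon\Y(k,l)\to\Y(k+l)$, restricts to an open set with codimension-$\geq 2$ complement where $\bar\pi$ is finite (the paper uses the locus where $z\in\End(L_2/L_0)$ is regular, you use the finite locus; both complements have codimension $\geq 2$ by Corollary~\ref{cor:small}), identifies $\omega_{\bar\pi}$ with $\O(\text{ramification divisor})$ there, and then computes the weight of that line bundle. Where you part company is only in presentation: the paper asserts, without elaborating, that the ramification divisor is the pullback under $\ch$ of the ramification divisor $E$ of $p\colon\A_k\times\A_l\to\A_{k+l}$, and then uses $\O(E)\cong\omega_p\cong\omega_{\A_k\times\A_l}\otimes p^*\omega_{\A_{k+l}}^{-1}$ together with $\omega_{\A_k}\cong\O\{-k(k+1)\}$ to get weight $-k(k+1)-l(l+1)+(k+l)(k+l+1)=2kl$; you instead identify the ramification divisor as $\mathrm{div}(\mathrm{Res})$ via the fibrewise tangent space $\Hom_{\C[z]}(L_1/L_0,L_2/L_1)$ and a local multiplicity check, and read off the weight $2kl$ directly from $\mathrm{Res}=\prod_{a,b}(\lambda_a-\mu_b)$. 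Since $\mathrm{div}_{\A_k\times\A_l}(\mathrm{Res})=E$, these are the same object and the same computation; your version supplies the justification for the step that the paper leaves implicit, which you rightly flag as the crux. Your alternative sketch (computing $\omega_{\Y(k)}$ from the Mirkovi\'c--Vybornov open chart and cancelling $\det\sV$ factors) is also consistent with the paper's final formula, though the paper never needs $\omega_{\Y(k)}$ itself, only the relative $\omega_p$.
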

\begin{proof}
To simplify notation we assume $\pi: \Y(k,l) \rightarrow \Y(k+l)$. Let $ U $ denote the locus where $ z \in \End(L_2/L_0)$ is regular (it has only one Jordan block for each eigenvalue).  Since a regular linear operator has only finitely many invariant subspaces, the restriction $ \pi_U : U \rightarrow \Y(k+l) $ is finite.

Let us call $D$ the ramification locus in $U$ (this is a divisor). Since $\Y(k,l)$ and $\Y(k+l)$ are smooth (by Corollary \ref{cor:smooth}) the relative dualizing sheaf is a line bundle which is uniquely determined by its restriction to $U$ (here we use that the complement of $U$ has codimension $\ge 2$ by Corollary \ref{cor:small}). Since $\pi_U$ is finite the relative dualizing sheaf is $\O_U(D)$. Hence $\omega_\pi \cong \O_{\Y(k,l)}(\oD)$.

Now consider the commutative diagram
$$\xymatrix{
\Y(k,l) \ar[r]^{\pi} \ar[d]^{\ch} & \Y(k+l) \ar[d]^{\ch} \\
\A_k \times \A_l \ar[r]^{p} & \A_{k+l}
}$$
where $p$ is the natural projection $(\A^k/S_k) \times (\A^l/S_l) \rightarrow (\A^{k+l})/S_{k+l}$. Then $\O_{\Y(k,l)}(\oD) \cong \ch^*(\O_{\A_k \times \A_l}(E))$ where $E$ is the ramification divisor of $p$. On the other hand,
$$\O_{\A_k \times \A_l}(E) \cong \omega_p \cong \omega_{\A_k \times \A_l} \otimes p^* (\omega_{\A_{k+l}}^{-1}).$$
The result follows since $\omega_{\A_k} \cong \O_{\A_k} \{-k(k+1)\}$ where we use that $\A_k \cong \Spec \C[e_1, \dots, e_k]$ where $e_i$ has weight $2i$.

\end{proof}

\begin{Corollary}
Using the same notation as in Lemma \ref{lem:dualizing}, $\pi_* \O_{\Y(\uk)}$ has no higher cohomology.
\end{Corollary}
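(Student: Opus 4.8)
The plan is to deduce the vanishing of $R^q\pi_*\O_{\Y(\uk)}$, $q>0$, from Grauert--Riemenschneider vanishing applied to $\pi$, using Lemma~\ref{lem:dualizing} to trade $\omega_{\Y(\uk)}$ for $\O_{\Y(\uk)}$. Write $X = \Y(\dots, k_i, k_{i+1}, \dots)$ and $Y = \Y(\dots, k_i + k_{i+1}, \dots)$, so $\pi : X \to Y$; both are smooth by Corollary~\ref{cor:smooth}, and $\pi$ is proper.

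First I would check that $\pi$ is surjective and generically finite, so that $\dim X = \dim Y$. Surjectivity holds because any endomorphism of a finite-dimensional vector space has invariant subspaces of every dimension, so over any point of $Y$ one can lift $L_i$; generic finiteness holds because over the dense open locus in $Y$ where $z$ acts on $L_{i+1}/L_{i-1}$ with distinct eigenvalues, the fibre of $\pi$ is the finite set of spans of subsets of the eigenlines. (Equivalently, $\pi$ occurs as a composition factor of the small morphism $\Y(\dots)\to\Y(k_1+\dots+k_n)$ of Corollary~\ref{cor:small}, which is in particular generically finite.)

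Next, Grauert--Riemenschneider vanishing for the proper morphism $\pi$ from the smooth variety $X$ gives $R^q\pi_*\omega_X = 0$ for $q > \dim X - \dim\pi(X)$, which by the previous paragraph is $q > 0$. By Lemma~\ref{lem:dualizing} we have $\omega_X \cong \pi^*\omega_Y \otimes \O_X\{2k_ik_{i+1}\}$, and since the equivariant twist $\O_X\{2k_ik_{i+1}\}$ is pulled back from $Y$, this is $\omega_X \cong \pi^*\bigl(\omega_Y\{2k_ik_{i+1}\}\bigr)$, the pullback of a line bundle. By the projection formula,
\[
R^q\pi_*\O_X \otimes \omega_Y\{2k_ik_{i+1}\} \cong R^q\pi_*\omega_X = 0 \qquad (q>0),
\]
and tensoring with $\omega_Y^{-1}\{-2k_ik_{i+1}\}$ yields $R^q\pi_*\O_X = 0$ for $q>0$, i.e. $\pi_*\O_{\Y(\uk)}$ has no higher cohomology.

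There is no real obstacle here; the argument is a two-line application of known results once the geometric input is in place. The only point needing (easy) care is confirming that $\pi$ is surjective with $\dim X = \dim Y$, so that the Grauert--Riemenschneider bound reads $q>0$ rather than $q > \dim X - \dim Y$ — and this is immediate from the smallness established in Corollary~\ref{cor:small}, or from the explicit description of the generic fibre above.
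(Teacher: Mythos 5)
Your proof is correct and follows essentially the same route as the paper: reduce via Lemma~\ref{lem:dualizing} (and the projection formula) to the vanishing of the higher direct images of $\omega_{\Y(\uk)}$, then invoke a vanishing theorem using that $\pi$ is generically finite. The only cosmetic difference is that you cite Grauert--Riemenschneider directly, whereas the paper derives that vanishing inline by twisting with an ample line bundle $L$ on the target and applying Kawamata--Viehweg to the nef and big bundle $\pi^*L$.
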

\begin{proof}
By Lemma \ref{lem:dualizing} it suffices to show that $\pi_* \omega_{\Y(\uk)}$ has no higher cohomology. But this is the same as showing that $H^i(\pi_* \omega_{\Y(\uk)} \otimes L) = 0$ for any very ample line bundle $L$ and $i > 0$. But
$$H^i(\pi_* \omega_{\Y(\uk)} \otimes L) \cong H^i(\omega_{\Y(\uk)} \otimes \pi^* L)$$
which, by Kawamata-Viehweg vanishing theorem, is zero for $i > 0$ since $\pi^* L$ is nef and big (this is where we use that $\pi$ is generically finite).
\end{proof}

\begin{Proposition}\label{prop:main}
Consider the same notation as in Lemma \ref{lem:dualizing} but with $k_{i+1}=1$. Then we have
$$\pi_* \O_{\Y(\uk)} \cong \bigoplus_{[k_i+1]} \O_{\Y(\dots,k_i+1,\dots)} \{-k_i\}.$$
Moreover, if $\Y(\uk) \rightarrow \A^1 = \Spec \C[x]$ is the map which records the eigenvalue of $z$ on $L_{i+2}/L_{i+1}$ then $x \in \Hom(\O_{\Y(\uk)}, \O_{\Y(\uk)} \{2\})$ induces a map $x: \pi_* \O_{\Y(\uk)} \rightarrow \pi_* \O_{\Y(\uk)} \{2\}$ which is an isomorphism on $k_i$ of the summands. The analogous result also holds if $k_i=1$.
\end{Proposition}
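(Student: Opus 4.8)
The plan is to reduce everything to the case $n=2$ and then to identify the pushforward with the structure sheaf of an explicit spectral cover.

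\emph{Reduction to $n=2$.} The map $\pi$ forgetting $L_i$ and the function $x$ depend only on the two steps $L_{i-1}\to L_i\to L_{i+1}$ around position $i$. So, using the iterated-bundle projection $\Y(\uk)\to\Y(k_1,\dots,k_{i-1})$ (which only touches lattices $L_j$, $j<i$), then the identification $\Y(k_i,1,k_{i+2},\dots,k_n)\cong\Y(k_i,1)\times_{\Y(k_i+1)}\Y(k_i+1,k_{i+2},\dots,k_n)$ together with the flat projection $\Y(k_i+1,k_{i+2},\dots,k_n)\to\Y(k_i+1)$, flat base change reduces both assertions (when $k_{i+1}=1$) to the single map $\pi_0:\Y(k,1)\to\Y(k+1)$ with $k=k_i$. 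The case $k_i=1$ is entirely analogous, reducing to $\Y(1,l)\to\Y(l+1)$, with ``$z$-invariant hyperplane'' replaced throughout by ``$z$-invariant line'' (the eigenvalue $x$ now being read off the line $L_i/L_{i-1}$, which is again a root of the characteristic polynomial of $z$ on the merged quotient). From here on I work with $\pi_0$.

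\emph{The spectral cover, and reduction to a normality statement.} On $\Y(k+1)$ let $\mathcal{W}$ be the rank $k+1$ bundle with fibre $L_{\mathrm{top}}/L_0$, carrying its weight-$2$ endomorphism $z$, and let $\chi(\mu)=\mu^{k+1}-e_1\mu^k+\dots\pm e_{k+1}$ be the characteristic polynomial of $z$ on $\mathcal{W}$ ($e_j$ pulled back along $\ch:\Y(k+1)\to\A_{k+1}$, $\mu$ of weight $2$). Put $Z:=\Spec_{\Y(k+1)}\bigl(\O_{\Y(k+1)}[\mu]/(\chi(\mu))\bigr)$; since $\chi$ is monic, $h:Z\to\Y(k+1)$ is finite and flat and $h_*\O_Z=\bigoplus_{j=0}^k\O_{\Y(k+1)}\cdot\mu^j\cong\bigoplus_{[k+1]}\O_{\Y(k+1)}\{-k\}$ as a $\C^\times$-equivariant sheaf, which is exactly the sheaf in the statement. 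Since $L_2/L_1$ is a $z$-equivariant quotient of $\mathcal{W}$, its eigenvalue $x$ is a root of $\chi$, so $(\pi_0,x)$ defines a morphism $g:\Y(k,1)\to Z$ with $h\circ g=\pi_0$, carrying $x$ to multiplication by $\mu$. Now $g$ is proper ($\pi_0$ proper, $h$ separated) and birational: over the dense locus where $z$ has distinct eigenvalues on $\mathcal{W}$, the bundle splits \'etale-locally into $z$-eigenlines, the $z$-invariant hyperplanes are the $k+1$ coordinate ones, and $g$ matches them with the $k+1$ sheets of $Z$. Hence $g_*\O_{\Y(k,1)}$ is a coherent, torsion-free, module-finite $\O_Z$-subalgebra of the function field of $Z$, so it equals $\O_Z$ provided $Z$ is normal. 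Granting that, $\pi_{0*}\O=h_*g_*\O=h_*\O_Z$, and multiplication by $x=\mu$ on $h_*\O_Z$ is the companion matrix of $\chi$, which is an isomorphism onto its image precisely on the $k$ summands $\O\cdot\mu^0,\dots,\O\cdot\mu^{k-1}$; this gives the ``moreover''. (By the Corollary above $R^{>0}\pi_{0*}\O=0$, so $R\pi_{0*}\O$ is concentrated in degree zero.)

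\emph{Normality of $Z$, and where the work is.} Being finite and flat over the smooth $\Y(k+1)$, $Z$ is Cohen--Macaulay, hence $S_2$; by Serre's criterion it remains to check regularity of $Z$ at its points of codimension $\le 1$. Away from $\ch^{-1}(\Delta)$, with $\Delta\subset\A_{k+1}$ the discriminant hypersurface, $h$ is finite \'etale so $Z$ is smooth. At a generic point of $\ch^{-1}(\Delta)$ the operator $z$ acts regularly --- the non-regular locus has codimension $\ge 2$, as in the proof of Lemma \ref{lem:dualizing} (via Corollary \ref{cor:small}) --- with a single eigenvalue of multiplicity two; since $\ch$ is smooth along the regular locus and $\Delta$ is reduced ($\operatorname{disc}$ being squarefree), the divisor $\ch^{-1}(\Delta)$ is reduced there, and a Hensel factorization writes the relevant quadratic factor of $\chi$ as $(\mu-\lambda_0)^2-(\mathrm{unit})\cdot t$ with $t$ a local parameter, so $Z$ is regular. (Alternatively, one can see this very concretely on the Mirkovi\'c--Vybornov charts $\X_\mu$ of $\Y(k+1)\cong\X(k+1)$: on each $\X_\mu$ the constant term of $\chi$ is, up to sign, one of the affine coordinates, so $Z|_{\X_\mu}$ is a graph and hence smooth --- modulo checking that these charts see all of codimension $\le 1$.) I expect this codimension-one regularity of $Z$ to be the only genuinely non-formal point; the rest is the fibre-product bookkeeping together with the two facts just cited, namely the smallness of the projections (Corollary \ref{cor:small}) and the reducedness of the discriminant.
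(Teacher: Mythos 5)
Your spectral cover $Z=\Spec_{\Y(k+1)}\bigl(\O_{\Y(k+1)}[\mu]/(\chi(\mu))\bigr)$ is the same scheme the paper calls $\Y'(k,1)=\Y(k+1)\times_{\A_{k+1}}(\A_k\times\A_1)$ (since $\C[\A_k\times\A_1]\cong\C[\A_{k+1}][\mu]/(\chi(\mu))$), so the setup coincides with the paper's. Where you genuinely diverge is in the key step. The paper does \emph{not} try to prove $\Y'(k,1)$ normal: it writes $\pi_{1*}\O_{\Y(k,1)}$ as the normalization of $\O_{\Y'(k,1)}$, pushes the cokernel $Q$ down, and kills $\pi_{2*}Q$ by comparing the pushforward triangle with its Verdier dual (via Lemma \ref{lem:dualizing}) and using that a torsion sheaf cannot be a direct summand of a torsion-free one. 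You instead prove $Z$ normal via Serre's criterion and invoke Zariski's connectedness theorem. Your route has the virtue of actually establishing normality of $\Y'(k,1)$, which is a cleaner geometric statement; the paper's route is more formal and sidesteps that question.

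The gap is precisely the step you yourself flag as ``where the work is.'' You need $Z$ regular in codimension one, which you reduce to: the pullback divisor $\ch^{-1}(\Delta)$ is reduced at its generic points (equivalently, that $\ch$ is smooth along the regular locus, or that $\operatorname{disc}\circ\ch$ is squarefree there). You assert this but do not prove it, and it is not proved in the paper either. (Lemma \ref{lem:dualizing} implicitly touches on similar issues when identifying $\oD$ with $\ch^*E$, but no smoothness statement for $\ch$ is established.) This is a real piece of content: it is a Kostant-type assertion, true, but not formal, and it is exactly what the paper's duality argument is engineered to avoid. Moreover, your proposed ``concrete'' alternative via the Mirkovi\'c--Vybornov charts is incorrect as stated: the constant term of $\chi$ is $\pm\det(A)$, and $\det(A)$ is an affine coordinate of $M_\mu$ only when $\mu$ has a single block. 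Already for $\mu=(2,1)$ one has $\det(A)=ec-ad$ in the chart coordinates, which is not a coordinate, so $Z|_{\X_\mu}$ is not visibly a graph. The Hensel argument is the one to pursue, but you would need to supply a proof (not just a citation to Corollary \ref{cor:small}, which only controls the codimension of the non-regular locus) that $d\ch$ is surjective at regular points, before the proposal is complete.

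Everything else checks out: the reduction to $\Y(k,1)\to\Y(k+1)$ by flat base change, the identification $h_*\O_Z\cong\bigoplus_{[k+1]}\O_{\Y(k+1)}\{-k\}$, the companion-matrix description of multiplication by $x=\mu$ (giving the ``moreover''), and the use of the Kawamata--Viehweg corollary to get degree-zero concentration.
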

\begin{proof}
To simplify notation we assume that all the other $k_j$ are zero so that the map is $\pi: \Y(k,1) \rightarrow \Y(k+1)$. Consider the following commutative diagram
$$\xymatrix{
\Y(k,1) \ar[r]^{\pi_1} \ar[rd]_{\ch} & \Y'(k,1) \ar[d]^{\ch'} \ar[r]^{\pi_2} & \Y(k+1) \ar[d]^{\ch} \\
& \A_k \times \A_1 \ar[r]^{p} & \A_{k+1}
}$$
where $\Y'(k,1)$ is the fiber product $\Y(k+1) \times_{\A_{k+1}} \A_k \times \A_1$. Notice that $\pi = \pi_2 \circ \pi_1$. Since $p$ is flat we have
\begin{align*}
\pi_{2*} \ch'^* (\O_{\A_k \times \A_1})
&\cong \ch^* p_* (\O_{\A_k \times \A_1}) \\
&\cong \ch^* (\bigoplus_{[k+1]} \O_{\A_{k+1}} \{-k\}) \\
&\cong \bigoplus_{[k+1]} \O_{\Y(k+1)} \{-k\}
\end{align*}
where the second isomorphism uses the standard fact that $\C[x,e_1,\dots,e_k]$ is a free $\C[e_1,\dots,e_{k+1}]$-module of rank $k+1$ generated by $1,x,\dots,x^k$ (where we identify $\A_k \cong \Spec \C[e_1,\dots,e_k]$, $\A_1 = \Spec \C[x]$ and $\A_{k+1} = \Spec \C[e_1,\dots,e_{k+1}]$). Note that, in particular, $x$ induces an isomorphism between $k$ of the summands here.

It remains to show that $\pi_* \O_{\Y(k,1)} \cong \pi_{2*} \O_{\Y'(k,1)}$. Since $\pi_1$ is a birational map we have $\pi_{1*} \O_{\Y(k,1)} \cong \tilde{\O}_{\Y'(k,1)}$, the normalization of the structure sheaf of $\Y'(k,1)$ (a priori we do not know it is normal). Now we have the exact triangle
$$\O_{\Y'(k,1)} \rightarrow \tilde{\O}_{\Y'(k,1)} \rightarrow Q$$
where $Q$ is the quotient. Pushing forward by $\pi_2$ we get
\begin{equation}\label{eq:1}
\bigoplus_{[k+1]} \O_{\Y(k+1)} \{-k\} \rightarrow \pi_* \O_{\Y(k,1)} \rightarrow \pi_{2*} Q.
\end{equation}
It remains to show $\pi_{2*} Q = 0$. To do this apply Verdier duality functor $\D$ to the sequence. We have that $\D(\O_{\Y(k+1)}) \cong \omega_{\Y(k+1)} [d]$, where $d=\dim \Y(k+1)=\dim \Y(k,1)$ and that
$$\D \pi_* \O_{\Y(k,1)} \cong \pi_* \D \O_{\Y(k,1)} \cong \pi_* \omega_{\Y(k,1)} [d] \cong \pi_* \O_{\Y(k,1)} \otimes \omega_{\Y(k+1)} \{2k\} [d]$$
where we used Lemma \ref{lem:dualizing} to get the last isomorphism. Thus, after tensoring with $\omega_{\Y(k+1)}^{-1} [-d]\{-2k\}$, we get the exact triangle
\begin{equation}\label{eq:2}
\D \pi_{2*} Q \otimes \omega_{\Y(k+1)}^{-1} [-d] \rightarrow \pi_* \O_{\Y(k,1)} \rightarrow \bigoplus_{[k+1]} \O_{\Y(k+1)} \{-k\}.
\end{equation}
Now consider the composition
$$\bigoplus_{[k+1]} \O_{\Y(k+1)} \{-k\} \rightarrow \pi_* \O_{\Y(k,1)} \rightarrow \bigoplus_{[k+1]} \O_{\Y(k+1)} \{-k\}$$
using the maps from (\ref{eq:1}) and (\ref{eq:2}). These maps are injective so the composition is injective. On the other hand,
$$\Hom(\O_{\Y(k+1)}, \O_{\Y(k+1)} \{i\}) \cong \begin{cases} \C & \text{ if } i = 0 \\ 0 & \text{ if } i < 0 \end{cases}$$
which implies that $\End(\oplus_{[k+1]} \O_{\Y(k+1)})$ is an upper triangular matrix with multiples of the identity map on the diagonal. Together with injectivity this implies that the composition above is an isomorphism. Thus (\ref{eq:1}) splits and we have
$$\pi_* \O_{\Y(k,1)} \cong \bigoplus_{[k+1]} \O_{\Y(k+1)} \{-k\} \oplus \pi_{2*} Q.$$
But $\pi_* \O_{\Y(k,1)}$ is torsion free and $\pi_{2*} Q$ is torsion. This cannot happen unless $\pi_{2*} Q = 0$ and we are done.
\end{proof}
\begin{Remark}
By considering the composition
$$\Y(\dots,1^{k_i+k_{i+1}},\dots) \rightarrow \Y(\dots,k_i,k_{i+1},\dots) \xrightarrow{\pi} \Y(\dots, k_i+k_{i+1},\dots)$$
it is not difficult to show using the result above and unique decomposition that for arbitrary $k_i,k_{i+1}$ we have
$$\pi_* \O_{\Y(\uk)} \cong \bigoplus_{\qbins{k_i+k_{i+1}}{k_i}} \O_{\Y(\dots,k_i+k_{i+1},\dots)} \{-k_ik_{i+1}\}.$$
\end{Remark}

\subsection{Checking all the properties}

\begin{Proposition}\label{prop:adjoints}
For $i \in I$ and $\ell \in \{0,1\}$ we have
\begin{align*}
(\sE_{i,\ell} \1_\uk)^L &\cong \1_\uk \sF_{i,-\ell} \{-\uk \cdot \alpha_i-1\} \\
(\sE_{i,\ell} \1_\uk)^R &\cong \1_\uk \sF_{i,-\ell} \{\uk \cdot \alpha_i+1\}.
\end{align*}
\end{Proposition}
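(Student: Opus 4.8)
The plan is to compute the two kernels directly from the adjunction formulas of Section~\ref{sec:setup}, using the relative dualizing sheaf computation of Lemma~\ref{lem:dualizing} as essentially the only input. Write $X = \Y(\uk)$ and $Y = \Y(\uk+\alpha_i)$, and let $Z = \Y^1_i(\uk) \subset X\times Y$, with its two projections $p_1\colon Z\to X$ (forgetting $L_i'$) and $p_2\colon Z\to Y$ (forgetting $L_i$). Since $Z \cong \Y(k_1,\dots,k_{i-1},k_i-1,1,k_{i+1},\dots,k_n)$, all three of $X$, $Y$, $Z$ are smooth (Corollary~\ref{cor:smooth}), and the projection of each onto $\Y(N)$ (where $N = \sum_j k_j$) is small by Corollary~\ref{cor:small}, hence generically finite and dominant; therefore $\dim X = \dim Y = \dim Z =: d$ and $\codim_{X\times Y} Z = d$. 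This dimension count is one of the two places the geometry of Section~\ref{sec:geometry} enters, and it is responsible for the fact that no cohomological shift survives in the statement.

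First I would unwind the kernels. Set $\sL := \sV_i/\sV_i'$, the line bundle on $Z$ with fibre $L_i/L_i'$. Then $\sE_{i,\ell}\1_\uk = \O_Z\otimes\sL^{-\ell}[i\ell]\{k_i-1-i\ell\}$, while $\1_\uk\sF_{i,-\ell} = \O_Z\otimes\sL^{\ell}[-i\ell]\{k_{i+1}+i\ell\}$; for the latter one must use that on $Z$ the bundle $\sV_i$ is the one pulled back from the first factor in each case, so that $\sV_i$ and $\sV_i'$ exchange roles in the quotient and $\sL^{+1}$ genuinely appears. Grothendieck duality for the closed embedding $Z\hookrightarrow X\times Y$ gives $\O_Z^\vee\cong\O_Z\otimes\det N_{Z/X\times Y}\,[-d]$, and hence
$$(\sE_{i,\ell}\1_\uk)^\vee \;\cong\; \O_Z\otimes\sL^{\ell}\otimes\det N_{Z/X\times Y}\,[-i\ell-d]\{-k_i+1+i\ell\}.$$

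Next I would rewrite the normal bundle determinant using smoothness: from $\omega_Z \cong p_1^*\omega_X\otimes p_2^*\omega_Y\otimes\det N_{Z/X\times Y}$ one gets $\det N_{Z/X\times Y}\otimes p_1^*\omega_X\cong\omega_{p_2}$ and $\det N_{Z/X\times Y}\otimes p_2^*\omega_Y\cong\omega_{p_1}$. The map $p_2$ forgets $L_i$, so it is precisely a projection of the kind treated in Lemma~\ref{lem:dualizing}, merging the two codimensions $1$ and $k_{i+1}$; thus $\omega_{p_2}\cong\O_Z\{2k_{i+1}\}$. Likewise $p_1$ forgets $L_i'$, merging codimensions $k_i-1$ and $1$, so $\omega_{p_1}\cong\O_Z\{2(k_i-1)\}$. (Recognising $p_1,p_2$ as these ``forget a middle lattice'' maps is the second geometric input.) Since $(\sE_{i,\ell}\1_\uk)^R$ is $(\sE_{i,\ell}\1_\uk)^\vee$ tensored with $p_1^*\omega_X$ and shifted by $[d]$, the shift $[-d]$ from duality cancels and I obtain
$$(\sE_{i,\ell}\1_\uk)^R \cong \O_Z\otimes\sL^{\ell}\otimes\omega_{p_2}\,[-i\ell]\{-k_i+1+i\ell\} \cong \O_Z\otimes\sL^{\ell}[-i\ell]\{2k_{i+1}-k_i+1+i\ell\},$$
which equals $\1_\uk\sF_{i,-\ell}\{k_{i+1}-k_i+1\} = \1_\uk\sF_{i,-\ell}\{\uk\cdot\alpha_i+1\}$ since $\uk\cdot\alpha_i = k_{i+1}-k_i$. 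The left adjoint is symmetric: tensoring instead with $p_2^*\omega_Y[d]$ replaces $\omega_{p_2}$ by $\omega_{p_1}$ and gives $(\sE_{i,\ell}\1_\uk)^L\cong\O_Z\otimes\sL^{\ell}[-i\ell]\{k_i-1+i\ell\} = \1_\uk\sF_{i,-\ell}\{-\uk\cdot\alpha_i-1\}$.

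I do not expect a serious obstacle: once Lemma~\ref{lem:dualizing} and the dimension count are in place this is a short calculation. The part that needs genuine care is the bookkeeping of the $\sL^{\pm\ell}$ twists and the $[i\ell]$ shifts — in particular pinning down the convention behind $\det(\sV_i'/\sV_i)$ in the definition of $\sF_{i,-\ell}$ so that it reads as an honest line-bundle quotient on $Z$ — together with assigning the two determinant-of-normal-bundle identities to the correct adjoint.
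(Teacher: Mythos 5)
Your proof is correct and takes essentially the same route as the paper's: observe that $\Y(\uk)$, $\Y(\uk+\alpha_i)$ and $\Y^1_i(\uk)$ all have the same dimension, apply Grothendieck duality for the closed embedding $\Y^1_i(\uk)\hookrightarrow\Y(\uk)\times\Y(\uk+\alpha_i)$, and identify the resulting twist with the relative dualizing sheaves $\omega_{p_1},\omega_{p_2}$ computed via Lemma~\ref{lem:dualizing}. The paper only writes out the $L$-adjoint and is terser about the $[i\ell]$ shifts and the $\det(\sV'_i/\sV_i)$ convention in the definition of $\sF_{i,-\ell}$; your careful bookkeeping of both, together with doing the $R$-adjoint explicitly, is a welcome (and correct) elaboration rather than a different argument.
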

\begin{proof}
We prove the first isomorphism (the second follows similarly). Consider the two maps
$$\Y(\uk) \xleftarrow{\pi_1} \Y_i^1(\uk) \xrightarrow{\pi_2} \Y(\uk+\alpha_i).$$
Notice that all three varieties have the same dimension $d=m \sum_i k_i$. Hence, as a sheaf in $D(\Y(\uk) \times \Y(\uk+r\alpha_i))$ we have
$$\O_{\Y_i^1(\uk)}^{-1} \cong \omega_{\Y_i^1(\uk)} \otimes \omega_{\Y(\uk) \times \Y(\uk+\alpha_i)}^{-1} [-d] \cong \omega_{\pi_1} \otimes \pi_2^* \omega_{\Y(\uk+\alpha_i)}^{-1}.$$
Thus
\begin{align*}
(\sE_{i,\ell} \1_\uk)^L
& \cong \omega_{\pi_1} \otimes \det(\sV'_i/\sV_i)^{\otimes \ell} \{-(k_i-1)\} \\
& \cong \O_{\Y_i^1(\uk)} \otimes \det(\sV'_i/\sV_i)^{\otimes \ell} \{k_i-1\} \cong \1_\uk \sF_{i,-\ell} \{k_i-k_{i+1}-1\}
\end{align*}
where the second isomorphism uses Lemma \ref{lem:dualizing}. The result follows since $\uk \cdot \alpha_i = -k_i + k_{i+1}$.
\end{proof}

\begin{Proposition}\label{prop:ij}
For $i \ne j \in I$ and $\ell,\ell' \in \{0,1\}$ we have
$$\sF_{j,-\ell'} * \sE_{i,\ell} \cong \sE_{i,\ell} * \sF_{j,-\ell'}.$$
\end{Proposition}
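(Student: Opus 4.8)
The statement $\sF_{j,-\ell'} * \sE_{i,\ell} \cong \sE_{i,\ell} * \sF_{j,-\ell'}$ for $i \ne j$ is the ``far commutation'' relation, which is always the easiest of the categorical relations: the two correspondences involved modify disjoint steps of the flag, so geometrically everything decouples. The plan is to describe both convolution products as structure sheaves of the same fiber product, which is transverse (hence the derived structure is the naive one), and then match the equivariant and homological shifts.

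First I would reduce to the case $|i-j|$ arbitrary but recall that only the entries $L_{i-1}, L_i, L_i', L_j, L_j'$ (and their neighbors) are involved. The kernel $\sE_{i,\ell} \1_{\uk}$ is supported on $\Y_i^1(\uk) \subset \Y(\uk) \times \Y(\uk+\alpha_i)$, which modifies the $i$-th subspace only, and $\sF_{j,-\ell'}$ is supported on $\Y_j^1$, modifying the $j$-th subspace only. The composition $\sE_{i,\ell} * \sF_{j,-\ell'}$ (resp. $\sF_{j,-\ell'} * \sE_{i,\ell}$) is computed by pulling the two correspondences back to the triple product and pushing forward; because $i \ne j$ the relevant fiber product inside $\Y(\uk') \times \Y(\uk'') $ is
$$
Z := \{ (L_\bullet, L_\bullet'') : L_t = L_t'' \text{ for } t \notin \{i,j\},\ L_i'' \subset L_i \text{ (codim 1)},\ L_j'' \subset L_j \text{ (codim 1)} \},
$$
and one checks directly that the two intersections (in the two orders of composition) are both equal, scheme-theoretically, to $Z$ and that the intersection is transverse — i.e.\ $\pi_{12}^*\Y_i^1$ and $\pi_{23}^*\Y_j^1$ meet cleanly of the expected dimension inside the triple product, so $\pi_{12}^*\O_{\Y_i^1} \otimes \pi_{23}^*\O_{\Y_j^1} \cong \O_Z$ with no higher $\mathrm{Tor}$. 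Then $\pi_{13}$ restricted to $Z$ is an isomorphism onto $\Y_i^1 \cap \Y_j^1 \subset \Y(\uk) \times \Y(\uk + \alpha_i - \alpha_j)$, so $\pi_{13*}\O_Z$ is again a structure sheaf, and the same holds in the other order. This gives an isomorphism of the underlying sheaves.

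It then remains to check that the line bundle twists and the shifts agree. On both sides the twist is $\det(\sV_i/\sV_i')^{\otimes -\ell} \otimes \det(\sV_j''/\sV_j)^{\otimes \ell'}$ since the bundles $\sV_i, \sV_i'$ and $\sV_j, \sV_j''$ live on disjoint steps and are unaffected by the other modification; here one must be careful that the pullbacks of $\sV_i/\sV_i'$ along the two composition routes are canonically identified, which is clear because $L_i, L_i'$ do not change when we modify $L_j$. For the shifts: from the definitions, $\sE_{i,\ell}\1_{\uk+?}$ contributes $[i\ell]\{k_i - 1 - i\ell\}$ and $\sF_{j,-\ell'}\1_{\uk}$ contributes $[-j\ell']\{k_{j+1} + j\ell'\}$; the only subtlety is that the weight label $\uk$ seen by each kernel differs between the two orders (e.g.\ on one side $\sF_{j,-\ell'}$ acts at weight $\uk$, on the other at weight $\uk + \alpha_i$), but since $i \ne j$ we have $(\uk + \alpha_i)_j = k_j$ and $(\uk+\alpha_i)_{j+1} = k_{j+1}$ (as $j, j+1 \ne i$ — one checks the single edge case $j+1 = i$ separately, where the shift $k_{j+1}$ vs.\ $k_{j+1}$ in $\sE$ is still unaffected because $\sF_{j}$ changes $k_j, k_{j+1}$ but $\sE_i$ reads $k_i$ only), so the total homological and equivariant shifts match on the nose.

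The main obstacle — really the only place requiring genuine care rather than bookkeeping — is verifying the transversality (clean intersection of the expected dimension, so that the derived tensor product on the triple product has no higher Tor). I would handle this by working in the local model: over a point, $\Y_i^1$ and $\Y_j^1$ impose independent conditions (a codimension-one modification at step $i$ versus step $j$), and a direct tangent-space computation — or the iterated-bundle structure of $\Y(\uk)$ from Section~\ref{sec:varieties}, which lets one fiber over $\Y$ of the untouched steps and reduce to the product $\Y_i^1(k_i) \times \Y_j^1(k_j)$ — shows the intersection is smooth of the expected codimension. Given smoothness of all the $\Y(\uk)$ (Corollary~\ref{cor:smooth}) and equality of dimensions, cleanness of the intersection is then automatic.
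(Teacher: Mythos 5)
Your approach matches the paper's: both compute the two convolutions as pushforwards of (the same) fiber product in the triple product and conclude by matching twists and shifts. The paper does this explicitly only for $j=i+1$, $\ell=\ell'=0$, whereas you attempt uniformity, which is fine.

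Two details in your write-up are off, though not fatally. First, the condition in $Z$ at position $j$ should be $L_j \subset L_j''$ (the $\sF_j$ direction grows $L_j$), not $L_j'' \subset L_j$. Second, your bookkeeping for the shifts needs care precisely in the edge case $j = i-1$: there, because $(\alpha_{i-1})_i \ne 0$, the individual equivariant shifts of $\sE_{i,\ell}$ and $\sF_{i-1,-\ell'}$ are \emph{not} the same between the two orders --- they change by $\pm 1$ respectively and only the \emph{sum} agrees. Your parenthetical claim that the $\sE_i$ shift is ``unaffected because $\sF_j$ changes $k_j,k_{j+1}$ but $\sE_i$ reads $k_i$ only'' is backwards: when $j+1=i$, $\sF_j$ does change $k_i$, so $\sE_i$'s shift \emph{is} affected and you need the cancellation. (Your stated sufficient condition ``$(\uk+\alpha_i)_j = k_j$ and $(\uk+\alpha_i)_{j+1}=k_{j+1}$'' also fails for $j=i+1$; it happens not to matter because the $\sF_j$ shift reads $k_{j+1}$ and not $k_j$.) Finally, you correctly flag the Tor-vanishing of the clean intersection as the substantive point --- the paper glosses over this as well --- but note that your proposed reduction via the iterated bundle structure to $\Y^1_i(k_i) \times \Y^1_j(k_j)$ does not literally decouple the two modifications when $|i-j|=1$, since the flag condition $L''_{i-1}\subset L''_i$ (for $j=i-1$) ties them together; a direct dimension count on the fiber product is cleaner.
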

\begin{proof}
Suppose $j=i+1$ and $\ell=0=\ell'$ (the other cases are similar). Then
\begin{align*}
(\sF_j * \sE_i) \1_\uk
&\cong \O_{\Y^1_{i+1}(\uk+\alpha_i-\alpha_{i+1})} \{k_{i+2}-1\} * \O_{\Y^1_i(\uk)} \{k_i-1\} \\
&\cong \pi_{13*}(\O_{\pi_{12}^{-1} \Y^1_i(\uk)} \otimes \O_{\pi_{23}^{-1} \Y^1_{i+1}(\uk+\alpha_i-\alpha_{i+1})}) \{k_i+k_{i+2}-2\} \\
&\cong \pi_{13*}(\O_\Y) \{k_i+k_{i+2}-2\}
\end{align*}
where $\Y$ is the locus
$$\{\dots L_{i-1} \xrightarrow{k_{i-1}-1} L_i'=L_i'' \xrightarrow{1} L_i \xrightarrow{k_i} L_{i+1}=L_{i+1}' \xrightarrow{1} L_{i+1}'' \xrightarrow{k_{i+1}-1} L_{i+2} \dots\}$$
and $\pi_{13}$ forgets $L_i'$ and $L_{i+1}'$. This means that $\pi_{13}$ is an isomorphism of $\Y$ onto its image and hence
$$(\sF_j * \sE_i) \1_\uk \cong \O_{\Y(\dots,k_{i-1}-1,1,k_i,1,k_{i+1}-1,\dots)} \{k_i+k_{i+2}-2\}.$$
A similar argument shows that $(\sE_i * \sF_j) \1_\uk$ is the same kernel.
\end{proof}

\begin{Proposition}\label{prop:ef}
For $i \in I$ and $\ell \in \{0,1\}$ we have
\begin{align*}
(\sE_{i,\ell} * \sF_{i,-\ell}) \1_\uk \cong (\sF_{i,-\ell} * \sE_{i,\ell}) \1_{\uk} \bigoplus_{[-k_i+k_{i+1}]} \1_\uk \ \ & \text{ if } k_i \le k_{i+1} \\
(\sF_{i,-\ell} * \sE_{i,\ell}) \1_\uk \cong (\sE_{i,\ell} * \sF_{i,-\ell}) \1_{\uk} \bigoplus_{[k_i-k_{i+1}]} \1_\uk \ \ & \text{ if } k_i \ge k_{i+1}
\end{align*}
\end{Proposition}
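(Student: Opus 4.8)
The plan is to compute both convolutions $\sE_{i,\ell} * \sF_{i,-\ell} \1_\uk$ and $\sF_{i,-\ell} * \sE_{i,\ell} \1_\uk$ explicitly as sheaves on $\Y(\uk) \times \Y(\uk)$, reducing to the local picture around the $i$-th step, and then compare them. Since the shifts by $\det(\sV_i/\sV_i')^{\otimes \pm\ell}$ and the homological/equivariant twists in the definition of $\sE_{i,\ell},\sF_{i,-\ell}$ are just tensor twists pulled back appropriately, it suffices to treat the case $\ell=0$; the general case follows by tracking these twists, which cancel symmetrically in the two composites, or noting that $\sE_{i,1},\sF_{i,-1}$ differ from $\sE_i,\sF_i$ by the line bundle $\det(\sV_i/\sV_i')$ which is the same for both convolutions. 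By the iterated-bundle structure of $\Y(\uk)$ over $\Y(\dots,k_{i-1})$ and the fact that nothing outside positions $i,i+1$ is touched, we may further reduce to $\uk = (k,l)$, i.e. to $\Y(k,l) = \{L_0 \xrightarrow{k} L_1 \xrightarrow{l} L_2\}$.

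First I would identify $(\sF_i * \sE_i)\1_\uk$ geometrically. The composite convolution is supported on the locus $Z := \{(L_\bullet, L_\bullet') : L_0=L_0', L_2=L_2', L_1'\subseteq L_1 \text{ and } \dim L_1/L_1' \le 1, \dim L_1'/L_0\le \dim, \dots\}$ — more precisely $\sE_i$ lowers $L_1$ by codimension $1$ then $\sF_i$ raises it back, so the intermediate space $\tilde{Z} = \{L_0 \xrightarrow{k} L_1 \xrightarrow{1} \tilde{L}_1 \xleftarrow{1} L_1' \xrightarrow{l-1}\cdots\}$... Let me restate: the fibre product parametrizes $L_0 \subseteq L_1' \subseteq L_1 \subseteq L_2$ with $zL_i\subseteq L_i$, $\dim L_1/L_1'=1$, $\dim L_1/L_0 = k$, $\dim L_2/L_1=l$, together with the "return" giving back a point of $\Y(k,l)$. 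After pushing to $\Y(k,l)\times\Y(k,l)$ along $\pi_{13}$, one sees $\sF_i*\sE_i\1_\uk \cong \pi_*\O_{\Y(k-1,1,l)}$ where $\pi:\Y(k-1,1,l)\to\Y(k,l)\times_{\Y(k+l)}\Y(k,l)$ or, more usefully, $\sF_i*\sE_i\1_\uk$ is the pushforward to the diagonal-ish locus of $\O$ of the three-step variety $\{L_0 \xrightarrow{k-1} \cdot \xrightarrow{1} L_1 \xrightarrow{l} L_2\}$ fibered over $\Y(k,l)$. Symmetrically, $\sE_i*\sF_i\1_\uk$ involves raising then lowering $L_1$, giving a variety $\{L_0 \xrightarrow{k} L_1 \xrightarrow{1}\cdot\xrightarrow{l-1} L_2\}$. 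The key observation is that both of these are (up to shift) pushforwards of structure sheaves of a $\P^1$-bundle or an $\A^1$-bundle over suitable incidence varieties, and the difference between "lower-then-raise" and "raise-then-lower" is governed exactly by Proposition \ref{prop:main}: the extra step can be merged into an adjacent step, producing $\pi_*\O \cong \oplus_{[k]}\O\{\cdots\}$ (or $\oplus_{[l]}$). Concretely, I would use the composition $\Y(k-1,1,l)\to\Y(k,l)$ factoring through merging the first two steps, apply Proposition \ref{prop:main} (with the roles of $k_i,k_{i+1}$ as there), and likewise for $\Y(k,1,l-1)\to\Y(k,l)$ merging the last two steps.

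Putting this together: write $A := \sE_i*\sF_i\1_\uk$ and $B:=\sF_i*\sE_i\1_\uk$. Using Proposition \ref{prop:main} one obtains $A \cong C \oplus (\oplus_{[l]}\1_\uk\{\cdots\})$ and $B \cong C \oplus (\oplus_{[k]}\1_\uk\{\cdots\})$ where $C$ is the common kernel $\O_{\Y(k-1,1,1,l-1)}$ (the "generic" part where $\sE$ and $\sF$ touch genuinely different steps), or more precisely both equal this common part plus a contribution from the locus where the two operations interact, and the interaction term is exactly a sum of copies of the identity kernel whose multiplicity is $[k]$ in one case and $[l]$ in the other. Assuming $k\le l$, we have $[l] = [k] + q^{?}[l-k]$ with the right shifts; comparing weights via the equivariant degrees carried by the line bundles and the map $\ch$ gives $[l]-[k]$ on the nose as $[-k_i+k_{i+1}]=[l-k]$ with the grading shift landing symmetrically, yielding $A \cong B \oplus \bigoplus_{[l-k]}\1_\uk$ when $k\le l$ and the reverse when $k\ge l$. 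I expect the main obstacle to be bookkeeping the equivariant shifts $\{\cdot\}$ (and the $\det(\sV_i/\sV_i')$ twists when $\ell=1$) so that the quantum integers $[k_i-k_{i+1}]$ come out exactly balanced rather than off by a shift; the geometric content — that both composites are pushforwards of structure sheaves of smooth varieties that can be compared via Proposition \ref{prop:main} and the fact that $\Hom(\O_\Y,\O_\Y\{j\})$ vanishes for $j<0$ and is $\C$ for $j=0$ — is fairly direct given the earlier results, so the real work is the weight computation and verifying that the ``common part'' $C$ genuinely agrees in the two convolutions (which follows since both reduce to $\O$ of the four-step variety $\Y(\dots,k-1,1,1,l-1,\dots)$ after the same pushforward).
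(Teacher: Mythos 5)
Your approach is genuinely different from the paper's, and while the broad idea (compute both convolutions, split off a diagonal piece, compare) is reasonable and is in the spirit of how the $\sl_2$-case was handled in \cite{CKL1}, there are real gaps in the way you have set it up, and the paper's actual proof sidesteps exactly these difficulties.

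The paper proves Proposition \ref{prop:ef} by induction on $k_i + k_{i+1}$. The base case $k_i=k_{i+1}=1$ is immediate (both composites are $\pi^*\pi_*$), and the cases $k_i=0$ or $k_{i+1}=0$ follow directly from Proposition \ref{prop:main}. For the inductive step the authors insert a zero into the sequence, identifying $\Y(\uk)$ with $\Y(\uk')$ for $\uk'=(\dots,k_i,0,k_{i+1},\dots)$, under which $\sE_i\1_\uk$ becomes $(\sE_{i+1}*\sE_i)\1_{\uk'}$ and $\1_\uk\sF_i$ becomes $\1_{\uk'}(\sF_i*\sF_{i+1})$. They then evaluate $[\sE_{i+1}*(\sF_i*\sE_i)*\sF_{i+1}]\1_{\uk'}$ in two different ways --- once grouping inwards and once commuting $\sE_{i+1}$ past $\sF_i$ via Proposition \ref{prop:ij} --- and use Krull--Schmidt together with the $q$-identity $[k_{i+1}-1][k_i]-[k_{i+1}][k_i-1]=[-k_i+k_{i+1}]$ to conclude. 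No fibre product, no Tor considerations, and no direct identification of the convolutions as sheaves on $\Y(\uk)\times\Y(\uk)$ is ever needed.

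In your proposal, by contrast, the decomposition $A\cong C\oplus\bigoplus_{[l]}\1_\uk\{\cdot\}$, $B\cong C\oplus\bigoplus_{[k]}\1_\uk\{\cdot\}$ is the entire content, and its verification is where the hard work lives. Three specific concerns: (i) The $q$-integer bookkeeping as you state it is wrong. You write ``$[l]=[k]+q^{?}[l-k]$'', but for $l>k\ge1$ no power of $q$ makes $[l]-[k]$ proportional to $[l-k]$ (e.g.\ $[3]-[1]=q^2+q^{-2}\ne q^{?}[2]$). What actually holds, with the two summands carrying \emph{different} shifts, is $q^k[l]-q^l[k]=[l-k]$; tracking the kernels carefully gives a diagonal contribution $\bigoplus_{[l]}\1_\uk\{k\}$ for $\sE_i*\sF_i$ and $\bigoplus_{[k]}\1_\uk\{l\}$ for $\sF_i*\sE_i$, which is compatible with that identity. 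You flag the shift computation as ``the main obstacle,'' and indeed this is where your argument as written does not close. (ii) The claim that the two convolutions share a ``common part'' $C$ is asserted, not proven. The two fibre products (intersect a codimension-1 subspace vs.\ sum two codimension-1 subspaces) agree on the open non-degenerate locus, but the convolution kernels need not agree as sheaves along the closure; one would at minimum have to analyze the derived tensor product there, and whether the exact triangle separating the diagonal from the non-degenerate component actually splits. The paper never needs to know what $C$ is. (iii) You treat the composites as ``pushforwards of structure sheaves of smooth varieties,'' but a priori the fibre products occurring in the convolutions could contribute higher Tor terms; one would have to check expected dimension and Tor-independence at each step, which, again, the paper's induction is designed to avoid.

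In short: your proposal is a plausible but substantially harder route, and as written it contains a wrong $q$-integer identity, undetermined shifts, and an unjustified claim that the two ``common parts'' coincide. The paper's induction on $k_i+k_{i+1}$ with the inserted-zero trick replaces all of this with a two-line Krull--Schmidt argument.
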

\begin{proof}
We work out the case $\ell=0$ (the case $\ell=1$ is the same). The result will follow essentially formally from Propositions \ref{prop:main} and \ref{prop:ef}. First note that if $k_{i+1}=0$ then $(\sF_i * \sE_i) \1_\uk \cong \bigoplus_{[k_{i+1}]} \1_\uk$ follows immediately from Proposition \ref{prop:main}. Likewise for $(\sE_i * \sF_i) \1_\uk \cong \bigoplus_{[k_{i+1}]} \1_\uk$ if $k_i=0$.

We will now use this to imply the first assertion for arbtirary $k_i \le k_{i+1}$ (the second assertion for $k_i \ge k_{i+1}$ follows similarly). The proof is by induction on $k_i+k_{i+1}$. The base case of the induction is $k_i=k_{i+1}=1$ where we need to show that $(\sE_i * \sF_i) \1_\uk \cong (\sF_i * \sE_i) \1_\uk$. This follows easily since both sides correspond to $\pi^* \pi_*$ where $\pi$ forgets $L_i$.

Let $ \uk $ be an arbitrary sequence and let $\uk' = (\dots, k_i,0,k_{i+1},\dots)$ and note that we have a canonical identification $\Y(\uk)=\Y(\uk')$. Under this identification it follows (almost by definition) that $\sF_i \1_\uk$ can be identified with $(\sF_i * \sF_{i+1}) \1_{\uk'}$. Likewise $\1_\uk \sE_i, \sE_i \1_\uk$ and $\1_\uk \sF_i$ can be identified with $\1_{\uk'} (\sE_{i+1} * \sE_i), (\sE_{i+1} * \sE_i) \1_{\uk'}$ and $\1_{\uk'} (\sF_i * \sF_{i+1})$ respectively. Thus it remains to show that
\begin{equation}\label{eq:3}
(\sE_{i+1} * \sE_i) * (\sF_i * \sF_{i+1}) \1_{\uk'} \cong (\sF_i * \sF_{i+1}) * (\sE_{i+1} * \sE_i) \1_{\uk'} \bigoplus_{[-k_i+k_{i+1}]} \1_{\uk'}.
\end{equation}
To evaluate the left hand side of (\ref{eq:3}), we have
\begin{align*}
[\sE_{i+1} * (\sF_i * \sE_i) * \sF_{i+1}] \1_{\uk'}
&\cong \text{ LHS of (\ref{eq:3})} \bigoplus_{[k_i-1]} (\sE_{i+1} * \sF_{i+1}) \1_{\uk'} \\
&\cong \text{ LHS of (\ref{eq:3})} \bigoplus_{[k_{i+1}][k_i-1]} \1_{\uk'}
\end{align*}
where the first isomorphism follows by the induction hypothesis and the second from Proposition \ref{prop:main}. On the other hand, using Proposition \ref{prop:ij} we have
\begin{align*}
[\sE_{i+1} * \sF_i * \sE_i * \sF_{i+1}] \1_{\uk'}
&\cong [\sF_i * \sE_{i+1} * \sF_{i+1} * \sE_i] \1_{\uk'} \\
&\cong [\sF_i * \sF_{i+1} * \sE_{i+1} * \sE_i] \1_{\uk'} \bigoplus_{[k_{i+1}-1]} (\sF_i * \sE_i) \1_{\uk'} \\
&\cong (\sF_i * \sF_{i+1}) * (\sE_{i+1} * \sE_i) \1_{\uk'} \bigoplus_{[k_{i+1}-1][k_i]} \1_{\uk'}
\end{align*}
where the second isomorphism follows again by the induction hypothesis and the third by Proposition \ref{prop:main}. Since $[k_{i+1}-1][k_i] - [k_{i+1}][k_i-1] = [-k_i+k_{i+1}]$ it follows, using uniqueness of decomposition, that
$$\text{ LHS of (\ref{eq:3})} \cong (\sF_i * \sF_{i+1}) * (\sE_{i+1} * \sE_i) \1_{\uk'} \bigoplus_{[-k_i+k_{i+1}]} \1_{\uk'} = \text{ RHS of (\ref{eq:3})}.$$
\end{proof}

\begin{Proposition}\label{prop:triangle}
If $\la \alpha_i, \alpha_j \ra = -1$ then
\begin{equation}\label{eq:toshow}
\Cone(\sE_i * \sE_{j,1} \xrightarrow{\alpha} \sE_{j,1} * \sE_{i} \{1\}) \cong \Cone(\sE_{j} * \sE_{i,1} \xrightarrow{\beta} \sE_{i,1} * \sE_{j} \{1\})
\end{equation}
where $\alpha$ and $\beta$ are the unique (up to rescaling) nonzero maps.
\end{Proposition}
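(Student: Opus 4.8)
The plan is to compute the four convolution kernels occurring in~\eqref{eq:toshow}, to identify $\alpha$ and $\beta$ with explicit sheaf maps, and then to recognise both cones as a twist of the structure sheaf of a single ``extra'' correspondence. Using the symmetry $i\leftrightarrow j$ of the statement we may assume $j=i+1$; the case $j=i-1$ is identical once the $i$-dependent shifts $[i\ell]$ are carried along. Throughout we suppress the untouched levels $L_k=L_k''$ for $k\neq i,i+1$.

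\emph{The convolutions.} Tracing through the definitions of $\sE_{i,\ell}$ and of the convolution product, one finds that $\sE_i*\sE_{j,1}\1_\uk$ and $\sE_{i,1}*\sE_j\1_\uk$ are both supported on the correspondence
\[
Z_1=\{(L_\bullet,L_\bullet''):L_i''\subset L_i\subset L_{i+1}''\subset L_{i+1},\ \dim L_i/L_i''=\dim L_{i+1}/L_{i+1}''=1\},
\]
whereas $\sE_{j,1}*\sE_i\1_\uk$ and $\sE_j*\sE_{i,1}\1_\uk$ are both supported on
\[
Z_2=\{(L_\bullet,L_\bullet''):L_i''\subset L_i,\ L_i''\subset L_{i+1}''\subset L_{i+1},\ \dim L_{i+1}/L_{i+1}''=1\},
\]
the difference being that in $Z_1$ the intermediate subspaces $L_i$ and $L_{i+1}''$ are forced to be nested. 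In each case the relevant fibre product inside $\Y(\uk)\times\Y(\cdot)\times\Y(\uk+\alpha_i+\alpha_j)$ is an intersection of preimages of the iterated-bundle correspondences $\Y^1_\bullet(\cdot)$, hence of smooth subvarieties; one checks, by the kind of local analysis used in Section~\ref{sec:geometry}, that these meet in the expected dimension with reduced intersection and that $\pi_{13}$ restricts to a closed immersion. Hence each of the four kernels is an explicit cohomological-and-equivariant shift of $\O_{Z_1}$ or of $\O_{Z_2}$, twisted by a line bundle built from the $\det(\sV_\bullet/\sV_\bullet')$ factors. One also checks that $Z_1\subset Z_2$ with $Z_1$ smooth and irreducible, that $Z_2=Z_1\cup Z_1'$ is the reduced union of two components of the same dimension --- with $Z_1'$ the closure of the locus where $L_i\cap L_{i+1}''=L_i''$ (so $L_i\not\subset L_{i+1}''$) --- and that $Y:=Z_1\cap Z_1'$ is a Cartier divisor in each component.

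\emph{The maps and the cones.} By \cite[Lemma A.4]{C2} applied to the relevant $\sl_3$-pairs, the Hom-spaces in which $\alpha$ and $\beta$ live are at most one-dimensional, so it suffices to exhibit one nonzero map of the right degree in each case. After absorbing the common cohomological shift and the common line-bundle twist, $\alpha$ becomes the unique nonzero morphism $\O_{Z_1}\to\O_{Z_2}\otimes\mathcal{N}_\alpha$. Since any such morphism has image supported on $Z_1$, it factors through the ideal sheaf $\mathcal{I}_{Z_1'/Z_2}\cong\O_{Z_1}(-Y)$ of the other component, and (a direct check showing that the twist $\mathcal{N}_\alpha$ restricts to $\O_{Z_1}(Y)$ on $Z_1$) it is an isomorphism onto $\mathcal{I}_{Z_1'/Z_2}\otimes\mathcal{N}_\alpha$; hence $\alpha$ is injective with cokernel $\O_{Z_1'}\otimes\mathcal{N}_\alpha|_{Z_1'}$. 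Similarly $\beta$ becomes the natural restriction surjection $\O_{Z_2}\twoheadrightarrow\O_{Z_1}\otimes\mathcal{N}_\beta$, whose kernel is $\mathcal{I}_{Z_1/Z_2}\otimes\mathcal{N}_\beta\cong\O_{Z_1'}(-Y)\otimes\mathcal{N}_\beta$. Thus, from the two short exact sequences $0\to\O_{Z_1'}(-Y)\to\O_{Z_2}\to\O_{Z_1}\to0$ and $0\to\O_{Z_1}(-Y)\to\O_{Z_2}\to\O_{Z_1'}\to0$ on the reducible scheme $Z_2=Z_1\cup Z_1'$, we obtain
\[
\Cone(\alpha)\cong\O_{Z_1'}\otimes\mathcal{M}_\alpha,\qquad \Cone(\beta)\cong\O_{Z_1'}(-Y)\otimes\mathcal{M}_\beta,
\]
for explicit line bundles $\mathcal{M}_\alpha,\mathcal{M}_\beta$ on $Z_1'$, both objects lying in the same cohomological degree.

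To finish, one checks that $\mathcal{M}_\alpha\cong\O_{Z_1'}(-Y)\otimes\mathcal{M}_\beta$, i.e.\ that the two twists agree. This is a bookkeeping computation keeping track of the factors $\det(\sV_\bullet/\sV_\bullet')^{\pm\ell}$ and the equivariant shifts in the definitions of the kernels, the relative dualizing sheaves of Lemma~\ref{lem:dualizing}, and the conormal contributions appearing in the cokernel and kernel above; the crucial input is that $Y\subset Z_1'$ is $\C^\times$-equivariantly cut out by a single function --- the difference between $e_1^{(i)}$ and the eigenvalue of $z$ acting on the line $L_{i+1}''/L_i''$, which detects the degeneration of the nesting --- so that $\O_{Z_1'}(-Y)$ is an explicit equivariant shift of $\O_{Z_1'}$. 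The main obstacle is exactly this final matching, together with the preliminary geometry of $Z_2$ (that the fibre products are reduced of the expected dimension and that $Z_2$ is the reduced union $Z_1\cup Z_1'$ with $Y$ Cartier in $Z_1'$); this is in the same spirit as, but more delicate than, the computations in Propositions~\ref{prop:ij} and~\ref{prop:ef} and Lemma~\ref{lem:dualizing}.
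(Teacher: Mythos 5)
Your overall strategy matches the paper's: identify the supports of the four convolutions as a pair of correspondences $Z_1\subset Z_2=Z_1\cup Z_1'$, realise $\alpha$ and $\beta$ as the two arrows in the standard short exact sequences on the reducible scheme $Z_2$, read off both cones as twists of $\O_{Z_1'}$, and match the twists. That is exactly what the paper does (with $V_1=Z_1$, $V_2=Z_2$, $V_1'=Z_1'$). The computations of the supports, the two exact sequences, and the identification of $\Cone(\alpha)$ with the quotient and $\Cone(\beta)$ with the (shifted) kernel are all correct.

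However, the ``crucial input'' you invoke for the final matching is wrong, and it is not a cosmetic slip: it is the step that actually carries the isomorphism. You assert that $Y=Z_1\cap Z_1'$ is cut out inside $Z_1'$ by a single $\C^\times$-equivariant \emph{function}, so that $\O_{Z_1'}(-Y)$ is merely an equivariant shift $\{\,\cdot\,\}$ of $\O_{Z_1'}$. This is false. The function you seem to have in mind --- the difference of the eigenvalues of $z$ on the two lines $L_i/L_i''$ and $L_{i+1}/L_{i+1}''$, i.e.\ $x_i-x_{i+1}$ --- vanishes \emph{identically} on $Z_1'$ (that equality of eigenvalues is precisely the closure condition defining $Z_1'$), so it cannot cut out a divisor there. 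It is the ideal of $Y$ inside the \emph{other} component $Z_1$ that is principal, generated by $x_i-x_{i+1}$, giving $\sI_{Y,Z_1}\cong\O_{Z_1}\{-2\}$. Inside $Z_1'$, by contrast, the locus $Y$ is where the nesting $L_i\subset L_{i+1}''$ holds, and this is the vanishing locus of the natural map of line bundles $\sV_i/\sV_i'\to\sV_{i+1}/\sV_{i+1}'$; hence
$$\sI_{Y,Z_1'}\;\cong\;\O_{Z_1'}\otimes(\sV_i/\sV_i')\otimes(\sV_{i+1}/\sV_{i+1}')^{-1},$$
a genuinely nontrivial line bundle (with no extra equivariant shift). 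Also, as stated, ``the eigenvalue of $z$ acting on the line $L_{i+1}''/L_i''$'' does not make sense for general $\uk$, since $L_{i+1}''/L_i''$ has dimension $k_{i+1}$, not $1$.

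This matters because the two cones carry visibly different line-bundle twists --- $\Cone(\alpha)$ ends up with $\det(\sV_j/\sV_j')^{-1}$ and $\Cone(\beta)$ with $\det(\sV_i/\sV_i')^{-1}$ --- and it is exactly the nontrivial factor $(\sV_i/\sV_i')\otimes(\sV_{i+1}/\sV_{i+1}')^{-1}$ in $\sI_{Y,Z_1'}$ that converts one into the other when $j=i+1$. If you replace it by a mere equivariant shift, as you propose, the final bookkeeping will not close: $\mathcal{M}_\alpha$ and $\O_{Z_1'}(-Y)\otimes\mathcal{M}_\beta$ will differ by the nontrivial line bundle $\det(\sV_i/\sV_i')\otimes\det(\sV_{i+1}/\sV_{i+1}')^{-1}$ on $Z_1'$. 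So the gap is real; correcting it (compute $\sI_{Y,Z_1'}$ as a section of a line bundle, not a function, and keep track of the resulting $\det$ factors) recovers the paper's proof.
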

\begin{proof}
Suppose $j=i+1$ (the case $j=i-1$ is similar). Recall the uniqueness of $\alpha$ and $\beta$ follows as a formal consequence of the categorical action.

Since all the intersections of the convolutions have the expected dimension it is not difficult to check that
$$\sE_i * \sE_{j,1} \cong \O_{V_1} \otimes \det(\sV_j/\sV_j')^{-1} [j] \{k_i+k_{i+1}-2-j\}$$
where $V_1 = \{\dots \xrightarrow{k_{i-1}} L_{i-1} \xrightarrow{k_i-1} L_i' \xrightarrow{1} L_i \xrightarrow{k_{i+1}-1} L_{i+1}' \xrightarrow{1} L_{i+1} \xrightarrow{k_{i+2}} \dots\}$ and
$$\sE_{j,1} * \sE_{i} \cong \O_{V_2} \otimes \det(\sV_j/\sV_j')^{-1} [j] \{k_i+k_{i+1}-1-j\}$$
where $V_2 = \{\dots \xrightarrow{k_{i-1}} L_{i-1} \xrightarrow{k_i-1} L_i' \overset{1}{\underset{k_{i+1}}{\rightrightarrows}} \begin{matrix} L_i \\ L_{i+1}' \end{matrix} \overset{k_{i+1}}{\underset{1}{\rightrightarrows}} L_{i+1} \xrightarrow{k_{i+2}} \dots \}$. Now $V_2$ is the union of two components: one is $V_1$ and the other is the closure of the locus where the eigenvalues of $z|_{L_{i+1}/L'_{i+1}}$ and $z|_{L_i/L_i'}$ are the same (we call this $V_1'$).

Now, consider the standard exact triangle
$$\sI_{V_1 \cap V_1', V_1} \rightarrow \O_{V_1 \cup V_1'} \rightarrow \O_{V_1'}$$
where $\sI_{V_1 \cap V_1', V_1}$ is the ideal sheaf of $V_1 \cap V_1' \subset V_1$. In this case $V_1 \cap V_1' \subset V_1$ consists of the locus where the eigenvalues of $z|_{L_{i+1}'/L_{i+1}}$ and $z|_{L_i'/L_i}$ agree which is carved out by $x_{i+1}-x_i$ where $x_i,x_{i+1}: V_1 \rightarrow \A^1$ record these eigenvalues. It follows that $\sI_{V_1 \cap V_1', V_1} \cong \O_{V_1}\{-2\}$. Tensoring with $\det(\sV_j/\sV_j')^{-1}$ and the appropriate grading shift we end up with an exact triangle
$$\sE_i * \sE_{j,1} \rightarrow \sE_{j,1} * \sE_{i} \{1\} \rightarrow \O_{V_1'} \otimes \det(\sV_j/\sV_j')^{-1} [j] \{k_i+k_{i+1}-j\}.$$
It follows that
$$\Cone(\sE_i * \sE_{j,1} \xrightarrow{\alpha} \sE_{j,1} * \sE_{i} \{1\}) \cong \O_{V_1'} \otimes \det(\sV_j/\sV_j')^{-1} [j] \{k_i+k_{i+1}-j\}.$$
This gives us the left hand side of (\ref{eq:toshow}).

To simplify the right hand side of (\ref{eq:toshow}) first note that the same argument as above shows that
\begin{align*}
\sE_j * \sE_{i,1} &\cong \O_{V_2} \otimes \det(\sV_i/\sV_i')^{-1} [i] \{k_i+k_{i+1}-1-i\} \\
\sE_{i,1} * \sE_j &\cong \O_{V_1} \otimes \det(\sV_i/\sV_i')^{-1} [i] \{k_i+k_{i+1}-2-i\}.
\end{align*}
One then considers the exact triangle
$$\sI_{V_1 \cap V_1', V_1'} \rightarrow \O_{V_1 \cup V_1'} \rightarrow \O_{V_1}.$$
In this case $V_1 \cap V_1' \subset V_1'$ consists of the locus where $L_i \subset L_{i+1}'$. This is given as the vanishing locus of the following natural map of line bundles
$$\sV_i/\sV_i' \rightarrow \sV_{i+1}/\sV_{i+1}'.$$
It follows that
$$\sI_{V_1 \cap V_1', V_1'} \cong \O_{V_1'} \otimes \det(\sV_i/\sV_i') \otimes \det(\sV_{i+1}/\sV_{i+1}')^{-1}$$
which gives the exact triangle
$$\O_{V_1'} \otimes \det(\sV_{i+1}/\sV_{i+1}')^{-1} [i] \{k_i+k_{i+1}-1-i\} \rightarrow \sE_j * \sE_{i,1} \rightarrow \sE_{i,1} * \sE_j \{1\}$$
Then the same argument as before shows that
$$\Cone(\sE_j * \sE_{i,1} \xrightarrow{\beta} \sE_{i,1} * \sE_j \{1\}) \cong \O_{V_1'} \otimes \det(\sV_j/\sV_j')^{-1} [i+1] \{k_i+k_{i+1}-1-i\}.$$
The result follows since $j=i+1$.
\end{proof}

\begin{Proposition}\label{prop:theta}
For $i \in I$, $\ell \in \{0,1\}$ we have
$$\sE_{i,\ell} * \sE_{i,\ell} \cong \sE_{i,\ell}^{(2)} \{-1\} \oplus \sE_{i,\ell}^{(2)} \{1\}$$
where $\sE_{i,\ell}^{(2)} \cong \O_{\Y_i^2(\uk)} \otimes \det(\sV_i/\sV_i')^{\otimes -\ell} [2i \ell] \{2(k_i-2-i\ell)\}$. Moreover, for $\theta \in \spn \{\alpha_i: i \in I\}$ the map $I * \t * I \in \End^2(\sE_{i,\ell} * id * \sE_{i,\ell})$ induces a map between the summands $\sE_{i,\ell}^{(2)} \{1\}$ on either side which is
\begin{itemize}
\item nonzero if $\la \theta, \alpha_i \ra \ne 0$ and
\item zero if $\la \theta, \alpha_i \ra = 0$.
\end{itemize}
\end{Proposition}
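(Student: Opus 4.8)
The plan is to realize the convolution $\sE_{i,\ell} * \sE_{i,\ell}$ geometrically through a fibre product and then to read off both assertions from Proposition~\ref{prop:main}. Set $\umu := \uk + \alpha_i$ and let $Z := \Y^1_i(\uk) \times_{\Y(\umu)} \Y^1_i(\umu)$ be the fibre product computing the convolution, with $p \colon Z \to \Y^2_i(\uk)$ the map forgetting the middle term and $p_2 \colon Z \to \Y(\umu)$ the projection to the middle factor. Concretely $Z$ parametrizes a flag together with a chain $L''_i \subset L'_i \subset L_i$ of $z$-invariant subspaces, each of codimension $1$ (with $p$ forgetting $L'_i$ and $p_2$ forgetting $L''_i$ and $L_i$), so $Z \cong \Y(k_1, \dots, k_{i-1}, k_i - 2, 1, 1, k_{i+1}, \dots, k_n)$. (If $k_i < 2$ both sides of the Proposition vanish, so assume $k_i \ge 2$.)

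\emph{Step 1: the decomposition.} First I would note that $\pi_{12}^{-1}\Y^1_i(\uk)$ and $\pi_{23}^{-1}\Y^1_i(\umu)$ are smooth of complementary codimension inside the triple product and meet in the expected dimension $m\sum_j k_j$ (immediate since each $\Y^1_i$ is smooth by Corollary~\ref{cor:smooth}), so they are Tor-independent and $\pi_{12}^*\O_{\Y^1_i(\uk)} \otimes \pi_{23}^*\O_{\Y^1_i(\umu)} \cong \O_Z$. A short bookkeeping of the determinant twists, using $\det(L_i/L'_i)\otimes\det(L'_i/L''_i) \cong \det(L_i/L''_i)$, which is the pullback of $\det(\sV_i/\sV_i')$ from $\Y^2_i(\uk)$, then gives uniformly in $\ell$
$$\sE_{i,\ell} * \sE_{i,\ell} \cong p_*\O_Z \otimes \det(\sV_i/\sV_i')^{\otimes -\ell}[2i\ell]\{2k_i - 3 - 2i\ell\}.$$
Now $p$ is a map of precisely the type treated in Proposition~\ref{prop:main} (forgetting a term with a codimension-$1$ step on each side), so $p_*\O_Z \cong \bigoplus_{[2]}\O_{\Y^2_i(\uk)}\{-1\} \cong \O_{\Y^2_i(\uk)}\oplus\O_{\Y^2_i(\uk)}\{-2\}$; more precisely $\O_Z \cong \O_{\Y^2_i(\uk)}[a]/(a^2 - ta + e_2)$, where $a \in \O(Z)$ records the eigenvalue of $z$ on the line $L'_i/L''_i$ and $t,e_2$ are the pullbacks of the elementary symmetric functions of $z$ acting on $\sV_i/\sV_i' = L_i/L''_i$, so the two summands are $\O_{\Y^2_i(\uk)}\cdot 1$ and $\O_{\Y^2_i(\uk)}\cdot a$. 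Substituting this back and comparing with the definition of $\sE^{(2)}_{i,\ell}$ yields $\sE_{i,\ell}*\sE_{i,\ell} \cong \sE^{(2)}_{i,\ell}\{1\}\oplus\sE^{(2)}_{i,\ell}\{-1\}$, where $\O_{\Y^2_i(\uk)}\cdot 1$ is identified with $\sE^{(2)}_{i,\ell}\{1\}$ and $\O_{\Y^2_i(\uk)}\cdot a$ with $\sE^{(2)}_{i,\ell}\{-1\}$.

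\emph{Step 2: the $\t$-map.} Since the middle identity is the diagonal kernel and $\t$ acts on it by multiplication by the function $\t$ on $\Y(\umu)$, the $2$-morphism $I * \t * I$ acts on $\sE_{i,\ell}*\sE_{i,\ell} = p_*(\O_Z \otimes (\text{twist}))$ as multiplication by $p_2^*\t \in \O(Z)$. Write $p_2^*\t = \mu + \nu a$ with $\mu \in \O_{\Y^2_i(\uk)}\cdot 1$ of degree $2$ and $\nu \in \C$; then multiplication by $p_2^*\t$ sends the generator $1$ of the source summand $\O_{\Y^2_i(\uk)}\cdot 1$ to $\mu\cdot 1 + \nu\cdot a$, so its component landing in $\O_{\Y^2_i(\uk)}\cdot a$ — which, after the shift $\{2\}$ on the target, is exactly the target copy of $\sE^{(2)}_{i,\ell}\{1\}$ — is multiplication by the scalar $\nu$. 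Since $\Hom(\sE^{(2)}_{i,\ell},\sE^{(2)}_{i,\ell})$ is one-dimensional in degree $0$ (as $\Y^2_i(\uk)$ is connected), this scalar is the map in question, and it remains to identify $\nu$. For $\t = \alpha_j$ one expands $p_2^*\alpha_j = -p_2^*e_1^{(j)} + p_2^*e_1^{(j+1)}$, evaluating each trace $e_1^{(j)}$ along the refined chain $L''_i \subset L'_i \subset L_i$ and using additivity of the trace; this shows the $a$-coefficient of $p_2^*e_1^{(j)}$ is $1$ for $j = i$, is $-1$ for $j = i+1$, and $0$ otherwise, hence the $a$-coefficient of $p_2^*\alpha_j$ is $+1$ for $j = i\pm 1$, is $-2$ for $j = i$, and $0$ otherwise. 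Therefore for $\t = \sum_j c_j\alpha_j$ one gets $\nu = c_{i-1} - 2c_i + c_{i+1} = -\la\t,\alpha_i\ra$, which is nonzero if and only if $\la\t,\alpha_i\ra \ne 0$.

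\emph{Main obstacle.} The only genuine input is Proposition~\ref{prop:main}, which is already available; everything else is bookkeeping. The point that requires care is that the ``source'' and ``target'' copies of $\sE^{(2)}_{i,\ell}\{1\}$ sit in \emph{different} summands of the decomposition — the target copy being the $\{2\}$-shift of the $\sE^{(2)}_{i,\ell}\{-1\}$ summand — so the relevant component of $I * \t * I$ is an off-diagonal matrix entry, which is why it is a scalar rather than a function and why it detects exactly $\la\t,\alpha_i\ra$; keeping the grading shifts straight so that the two copies of $\O_{\Y^2_i(\uk)}$ in $p_*\O_Z$ land on $\sE^{(2)}_{i,\ell}\{1\}$ and $\sE^{(2)}_{i,\ell}\{-1\}$ in the correct order is the main source of possible error.
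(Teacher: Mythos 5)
Your proof is correct and follows essentially the same route as the paper's: reduce the first claim to Proposition~\ref{prop:main} via the fibre-product description of $\sE_{i,\ell}*\sE_{i,\ell}$, then analyze the degree-$2$ map by decomposing the pulled-back functions $e_1^{(j)}$ along the two-element basis of $p_*\O_Z$ over $\O_{\Y^2_i(\uk)}$. The only (minor, and in your favor) differences are that you handle $\ell=0$ and $\ell=1$ uniformly and you make the second step more explicit by exhibiting $p_*\O_Z\cong\O_{\Y^2_i(\uk)}[a]/(a^2-ta+e_2)$ and directly reading off the off-diagonal scalar $\nu=-\la\theta,\alpha_i\ra$, where the paper argues more briefly that $e_1^{(j)}$ for $j\ne i,i+1$ and $e_1^{(i)}+e_1^{(i+1)}$ are pulled back (hence act diagonally) while $e_1^{(i)}$ alone is not.
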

\begin{proof}
We will consider the case $\ell=0$ (the case $\ell=1$ is the same). In this case
$$\sE_i * \sE_i \1_\uk\cong \pi_* \O_{\Y(\dots,k_i-2,1,1,k_{i+1},\dots)} \{2k_i-3\}$$
where $\pi: \Y(\dots,k_i-2,1,1,k_{i+1},\dots) \rightarrow \Y(\dots,k_i-2,2,k_{i+1},\dots)$ forgets the obvious flag. By Proposition \ref{prop:main} this is equal to
$$\bigoplus_{[2]} \O_{\Y(\dots,k_i-2,2,k_{i+1},\dots)} \{-1\} \{2k_i-3\} \cong \bigoplus_{[2]} \O_{\Y_i^2(\uk)} \{2(k_i-2)\} \cong \bigoplus_{[2]} \sE_i^{(2)}$$
which proves the first claim.

To prove the second claim consider the composition
$$\Y(\dots,k_i-2,1,1,k_{i+1},\dots) \rightarrow \Y(\dots,k_i-1,k_{i+1}+1,\dots) \xrightarrow{\ch} \A_{\uk+\alpha_i}$$
where $\A_{\uk+\alpha_i} = [\dots \times \A_{k_i-1} \times \A_{k_{i+1}+1} \times \dots]$. As before, we denote by $e_1^{(i)}$, for $i=1, \dots, n$, the degree two generators on the right hand side. For $j \ne i,i+1$ it is clear that
$$e_1^{(j)}: \pi_* \O_{\Y(\dots,k_i-2,1,1,k_{i+1},\dots)} \rightarrow \pi_* \O_{\Y(\dots,k_i-2,1,1,k_{i+1},\dots)} \{2\}$$
induces a map $\oplus_{[2]} \sE_i^{(2)} \rightarrow \oplus_{[2]} \sE_i^{(2)} \{2\}$ which is zero between the summands $\sE_i^{(2)} \{1\}$ on either side (because in this case $e_1^{(j)}$ is pulled back from $\Y(\dots,k_i-2,2,k_{i+1},\dots)$). Likewise the same is true of $(e_1^{(i)} + e_1^{(i+1)})$.

On the other hand, it is not hard to see following the proof in Proposition \ref{prop:main} that $e_1^{(i)}$ (or equivalently $e_1^{(i+1)}$) induces a nonzero map between the $\sE_i^{(2)}\{1\}$ summands. The second part of the Proposition now follows since under the map
$$\alpha_i \mapsto \mbox{ multiplication by } (e_1^{(i)} - e_1^{(i+1)})$$
it is elementary to check that $\la \theta, \alpha_i \ra = 0$ is equivalent to
$$\theta \mapsto \mbox{ multiplication by } \sum_i a_i e_1^{(i)}$$
where $a_i=a_{i+1}$.
\end{proof}
\begin{Remark}
The argument above also shows that $(\sE_{i,\ell})^{*r} \cong \oplus_{[r]!} \sE_{i,\ell}^{(r)}$ where
$$\sE_{i,\ell}^{(r)} \cong \O_{\Y_i^r(\uk)} \otimes \det(\sV_i/\sV_i')^{\otimes -\ell} [i \ell r] \{r(k_i-r-i \ell)\} \in D(\Y(\uk) \times \Y(\uk+r\alpha_i)).$$
Likewise one can show that $(\sF_{i,-\ell})^{*r} \cong \oplus_{[r]!} \sF_{i,-\ell}^{(r)}$ where
$$\sF_{i,-\ell}^{(r)} \cong \O_{\Y_i^r(\uk)} \otimes \det(\sV_i/\sV_i')^{\otimes \ell} [-ir \ell] \{r(k_{i+1}+i \ell)\} \in D(\Y(\uk+r\alpha_i) \times \Y(\uk)).$$
\end{Remark}

\begin{proof}[Proof of Theorem \ref{thm:main}]
Propositions \ref{prop:adjoints}, \ref{prop:ij}, \ref{prop:ef}, \ref{prop:triangle}, \ref{prop:theta} imply conditions (\ref{co:adj}), (\ref{co:EiFj}), (\ref{co:EF}), (\ref{co:triangle}), (\ref{co:theta}) from Section \ref{sec:catactions}.

To check condition (\ref{co:hom1}) notice that the $\C^\times$-equivariant map $\Y(\uk) \rightarrow \A_\uk$ is proper and the $\C^\times$-action on $\A_\uk$ contracts everything to zero. This implies that the space of maps between any two (coherent) sheaves is finite dimensional and also that $\Hom(\O_{\Y(\uk)}, \O_{\Y(\uk)} \{l\})$ is zero if $l < 0$ and one-dimensional if $l=0$. This completes the proof of Theorem \ref{thm:main}.
\end{proof}

\subsection{Some remarks on shifts}\label{sec:shift}

It is a bit strange that the definitions of $\sE_{i,1}$ and $\sF_{i,-1}$ at the beginning of this section involve the shifts $[i]\{-i\}$. These shifts are necessary in Proposition \ref{prop:triangle}. If one removes these shifts, let us call these new kernels $\sE'_{i,1}$ and $\sF'_{i,-1}$, then we find that for $j=i+1$ we have
$$\Cone(\sE_i * \sE'_{j,1} \xrightarrow{\alpha} \sE'_{j,1} * \sE_i \{1\}) \cong \Cone(\sE_{j} * \sE'_{i,1} \xrightarrow{\beta} \sE'_{i,1} * \sE_{j} \{1\}) [-1]\{1\}.$$
We can work with this definition if we just change condition (\ref{co:triangle}) in Section \ref{sec:catactions} to say that $\Cone(\alpha) \cong \Cone(\beta) [-1]\{1\}$ if $j-i=1$. This is less natural in some cases (for example, for the action constructed in \cite{CL}) but, as we now explain, it makes things cleaner for the action in this paper (as well as the one in  \cite{CaKo}).

First, this changed definition has an effect on the affine braid group discussion from Section \ref{sec:braids}. Using the shifted complexes $\T'_i$ from the end of that section it is straight-forward to check that Lemma \ref{lem:conjugation} now reads
$$\T'_i \E'_{j,1} (\T'_i)^{-1} \cong \T'_j \E'_{i,1} (\T'_j)^{-1}$$
assuming $\la i,j \ra = -1$. So if we use $\phi' = \T'_{i,1} \T'_i$ then the argument from Section \ref{sec:braids} goes through to give an affine braid group action generated by $\T'_i$ and $\phi'_i$. The advantage of this braid group action is that, as we will see in Section \ref{sec:equiv}, it can be described more explicitly without the need of shifts. For example, the $\phi'$ turn out to be tensoring with some simple line bundle, whereas the $\phi$ are given by tensoring with a somewhat complicated shift of this line bundle.

\section{K-theory and symmetric Howe duality}\label{sec:Ktheory}

In \cite{CKL1}, we constructed a categorical $\sl_2$ action on $\oplus_{(k,l)} D(Y(k,l))$. This action generalizes to give an $\sl_n$ action on $\oplus_\uk D(Y(\uk))$, as explained in \cite{C1}. These varieties were recalled in Section \ref{sec:Ys}, but for the moment we just note that they are iterated Grassmannian bundles. Hence one has the identification
\begin{equation}\label{eq:K1}
K_{\Cx}(Y(\uk)) \cong \Lambda^{k_1}(\C^m) \otimes \dots \otimes \Lambda^{k_n}(\C^m)
\end{equation}
where $K_{\Cx}(X)$ denotes the Grothendieck group of equivariant coherent sheaves on $X$ tensored over $\Z[q,q^{-1}]$ (the $\Cx$-equivariant Grothendieck group of a point) with $\C(q)$. Note that in all cases that we consider the algebraic and topological K-theory will be the same. This action categorifies the natural $U_q(\sl_n)$ action on $\Lambda^N(\C^n \otimes \C^m)$ where $N = \sum_i k_i$. Using skew Howe duality one can use this action to obtain the R-matrix for $U_q(\sl_m)$ and subsequently the braid group action on the right hand side of (\ref{eq:K1}).

On the other hand there is an analogous story involving symmetric powers. In this case the big space is $\Sym^N(\C^n \otimes \C^m)$ and symmetric Howe duality can be used to construct the braid group action on $\Sym^{k_1}(\C^m) \otimes \dots \otimes \Sym^{k_n}(\C^m)$ (see \cite{TL}, Theorem 6.5). A categorified version of this action is given by the categorical $\sl_n$ action on $\oplus_\uk D(\Y(\uk))$. Lemma \ref{lem:Ktheory} below explains that the Grothendieck groups of $\Y(\uk)$ do indeed have the correct size.

\begin{Lemma}\label{lem:Ktheory}
We have a natural identification
\begin{equation}\label{eq:K2}
K_\Cx(\Y(\uk)) \cong \Sym^{k_1}(\C^m) \otimes \dots \otimes \Sym^{k_n}(\C^m).
\end{equation}
\end{Lemma}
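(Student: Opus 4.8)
The strategy is to exploit the fact, established in Section~\ref{sec:varieties}, that $\Y(\uk)$ is an iterated $\Y(k)$-bundle: the projection $\Y(k_1,\dots,k_n) \to \Y(k_1,\dots,k_{n-1})$ has fibers isomorphic to $\Y(k_n)$. Since $\Cx$-equivariant $K$-theory of a Zariski-locally-trivial fibre bundle (with affine-paving or cellular fibre) is the tensor product of the $K$-theory of the base with that of the fibre, it suffices to prove the case $n=1$, i.e. to show $K_\Cx(\Y(k)) \cong \Sym^k(\C^m)$, and then assemble the general statement by induction on $n$. (One should be a little careful that the bundle structure is compatible enough — e.g. that $\Y(k)$ has a filtrable structure or that the relevant $K$-theory is free — but this is the same kind of argument used for the $Y(\uk)$ in \eqref{eq:K1}.)

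**The base case.** For $n=1$ I would compute $K_\Cx(\Y(k))$ directly. By the isomorphism $\Y(k) \cong \X(k)$ of Section~\ref{sec:geometry}, and using the $\C^\times$-equivariant map $\ch \colon \X(k) \to \A_k = \A^k/S_k$ which contracts everything to the origin $0$ (the $\C^\times$-action dilates $\A^k$), one gets that restriction to the central fibre $\ch^{-1}(0)$ induces an isomorphism on $K$-theory: $K_\Cx(\X(k)) \cong K_\Cx(\ch^{-1}(0))$. But $\ch^{-1}(0) = \overline{\GrBD^{k\omega_1}}$ is the closed Schubert variety in the affine Grassmannian of $GL_m$ attached to the weight $k\omega_1$. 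By the geometric Satake equivalence, $H_*(\overline{\GrBD^{k\omega_1}})$ (equivalently its $K$-theory, which agrees here since the variety has an affine paving by Schubert cells) is the irreducible $GL_m$-representation of highest weight $k\omega_1$, namely $\Sym^k(\C^m)$. Tracking the $\C^\times$-weights — which is exactly the grading that makes the identification into a $\Z[q,q^{-1}]$-module isomorphism after $\otimes_{\Z[q,q^{-1}]} \C(q)$ — then gives $K_\Cx(\Y(k)) \cong \Sym^k(\C^m)$ as desired. Alternatively, and perhaps more self-containedly, one can use the open cover by the $\X_\mu$ together with Theorem~\ref{th:MViso} ($\X_\mu \cong M_\mu$ is an affine space) to build a cellular/affine stratification of $\X(k)$ indexed by $\mu \in \N^m$ with $|\mu|=k$, and read off that $K_\Cx(\X(k))$ is free of rank $\#\{\mu : |\mu|=k\} = \dim \Sym^k(\C^m)$, with the classes of the strata matching a natural monomial basis of $\Sym^k(\C^m)$.

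**Assembling and the main obstacle.** Given the base case, the inductive step writes $\Y(\uk) \to \Y(k_1,\dots,k_{n-1})$ as a $\Y(k_n)$-bundle and concludes $K_\Cx(\Y(\uk)) \cong K_\Cx(\Y(k_1,\dots,k_{n-1})) \otimes \Sym^{k_n}(\C^m) \cong \Sym^{k_1}(\C^m) \otimes \dots \otimes \Sym^{k_n}(\C^m)$ by induction. The main obstacle is the ``naturality'' of the identification: we do not just want an abstract isomorphism of $\C(q)$-vector spaces, but one compatible with the categorical $L\gl_n$-action of Theorem~\ref{thm:main}, i.e. under which the decategorified operators $[\sE_{i,\ell}], [\sF_{i,-\ell}]$ act as the quantum group generators of $\Sym^N(\C^n \otimes \C^m)$. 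So the real work is to check that the correspondences $\Y_i^r(\uk)$ of Section~\ref{sec:corr} induce, on these $K$-groups, precisely the matrix coefficients of the $U_q(\sl_n)$-action on $\Sym^N(\C^n \otimes \C^m)$ in the chosen basis — which amounts to a weight-space bookkeeping using Proposition~\ref{prop:main} (the $[k+1]$-fold splitting of $\pi_* \O$) and the shift conventions in the definitions of $\sE_{i,\ell}, \sF_{i,-\ell}$. This parallels exactly the skew case treated in \cite{C1,CKL1}, so the expectation is that it goes through by the same formal argument; the one genuinely new input is the base computation $K_\Cx(\Y(k)) \cong \Sym^k(\C^m)$, for which the affine Grassmannian / geometric Satake description of $\ch^{-1}(0)$ is the cleanest route.
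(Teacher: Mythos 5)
Your overall strategy matches the paper's: use the iterated $\Y(k)$-bundle structure to reduce to $n=1$, then compute $K_\Cx(\Y(k))$ by contracting onto the central fibre $\ch^{-1}(0) = \overline{\GrBD^{k\omega_1}}$ and invoking geometric Satake. However, there is a real subtlety you gloss over: geometric Satake identifies the \emph{intersection} cohomology $IH(\overline{\GrBD^{k\omega_1}})$ with $V_{k\omega_1} \cong \Sym^k(\C^m)$, not the ordinary cohomology $H^*$. Since $\overline{\GrBD^{k\omega_1}}$ is singular for $k\geq 2$, these differ a priori; the affine paving you mention only relates $K$-theory and $H_*$ to each other, not to $IH$. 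The paper closes this gap by observing that every weight multiplicity of $\Sym^k(\C^m)$ equals $1$, which forces $IH \cong H^*$; that observation must appear in your write-up.

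Your alternative base-case argument --- building a cellular stratification from the open sets $\X_\mu$ of Theorem~\ref{th:MViso} --- does not work as stated: the $\X_\mu$ form an open cover, not a stratification, and an open cover by affine spaces does not yield a cell decomposition. The paper's second route is instead a torus localization: choose a generic one-parameter subgroup $\diag(1,t,\dots,t^m)\times(t)$ of $GL_m \times \C^\times$ acting on $\X(k)$, identify the fixed points as lattices generated by $z^{\mu_1}e_1,\dots,z^{\mu_m}e_m$ with $\sum_i\mu_i = k$, and count that there are exactly $\binom{m+k-1}{k} = \dim\Sym^k(\C^m)$ of them. This gives the rank of the $K$-group cleanly without needing a paving.

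Finally, the ``main obstacle'' you flag --- matching the decategorified correspondences against the quantum group generators --- is not actually needed here. Lemma~\ref{lem:Ktheory} only asserts an identification of graded vector spaces; the upgrade to a $U_q(\sl_n)$-module isomorphism is Corollary~\ref{co:onKgroup}, whose proof is a one-line character argument: a finite-dimensional $U_q(\sl_n)$-representation is determined by the dimensions of its weight spaces, so once those dimensions match no further bookkeeping of $[\sE_{i,\ell}]$, $[\sF_{i,-\ell}]$ is required.
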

\begin{proof}
Since $\Y(\uk)$ is an iterated product of $\Y(k)$s it suffices to show that $K_\Cx(\Y(k)) \cong \Sym^k (\C^m)$. Here are two ways to do this.

First, note that $K_\Cx(\Y(k)) \cong H^*_{\Cx}(\Y(k))$. On the other hand, we have the map $\ch: \Y(k) \rightarrow \A_k$ and a $\C^\times$ action which contracts everything to $\ch^{-1}(0)$. So $H^*_\Cx(\Y(k)) \cong H^*_\Cx(\ch^{-1}(0))$. On the other hand, $\ch^{-1}(0)$ is isomorphic to the Schubert variety $\overline{\GrBD^{k \omega_1}}$ and by geometric Satake $IH(\overline{\GrBD^{k \omega_1}}) \cong V_{k \omega_1} \cong \Sym^k(\C^m)$. Finally, because each weight space of this representation is one dimensional it actually follows that $IH(\overline{\GrBD^{k \omega_1})}) \cong H^*(\overline{\GrBD^{k \omega_1}}))$ and hence $K_\Cx(\Y(k)) \cong \Sym^k(\C^m)$.

Alternatively, we can use localization. Note that $GL_m(\C) \times \C^\times$ acts on $\Y(k)$. Choose the $\C^\times \subset GL_m(\C) \times \C^\times$ which looks like
$$\diag(1,t,t^2,\dots,t^m) \times (t).$$
Using the notation from Section \ref{sec:geometry} we identify $\Y(k)$ with $\X(k)$. Then the $\C^\times$ fixed points are lattices $L \subset L_0$ where $L$ is generated by
$$z^{\mu_1}e_1, z^{\mu_2}e_2, \dots, z^{\mu_m}e_m$$
where $\mu_i \in \N$ satisfy $\sum_i \mu_i = k$. It is easy to see that the number of such points is $\binom{m+k-1}{k} = \dim \Sym^k(\C^m)$.
\end{proof}

Since a representation of $ U_q(\sl_n) $ is determined by the dimension of its weight spaces, we immediately deduce the following corollary.
\begin{Corollary} \label{co:onKgroup}
We have an isomorphism of $ U_q(\sl_n) $ representations
$$ \bigoplus_{\uk} K_{\Cx}(\Y(\uk)) \cong \Sym^N(\C^n \otimes \C^m) $$
where the direct sum ranges over those sequences $ \uk \in \N^n $ with $\sum_i k_i = N$.
\end{Corollary}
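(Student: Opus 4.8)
The plan is to push the categorical action of Theorem \ref{thm:main} down to Grothendieck groups and then invoke the fact that a finite-dimensional $U_q(\sl_n)$-module is determined up to isomorphism by the dimensions of its weight spaces.

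First I would observe that the $(L\gl_n,\theta)$ action on $\tK^{n,N}_{\GrBD,m}$ descends to the Grothendieck groups $\bigoplus_\uk K_\Cx(\Y(\uk))$: the classes $[\sE_{i,\ell}]$ and $[\sF_{i,-\ell}]$ act as $\C(q)$-linear operators, the equivariant shift $\{1\}$ becomes multiplication by $q$, and the isomorphisms in conditions (\ref{co:adj})--(\ref{co:theta}) decategorify to the defining relations of $U_q(\sl_n)$ (we restrict the $L\gl_n$-action to its finite $\sl_n$-part, which is all that is needed here). Thus $\bigoplus_\uk K_\Cx(\Y(\uk))$ becomes a $U_q(\sl_n)$-module, finite dimensional over $\C(q)$ since there are finitely many sequences $\uk$ with $\sum_i k_i = N$ and each $K_\Cx(\Y(\uk))$ is finite dimensional by Lemma \ref{lem:Ktheory}. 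By construction the summand $K_\Cx(\Y(\uk))$ is precisely the $\uk$-weight space; since $\sum_i k_i = N$ is fixed, distinct $\uk$ yield distinct $\sl_n$-weights, so this is the full weight-space decomposition.

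Next I would compare weight-space dimensions on the two sides. By Lemma \ref{lem:Ktheory} the $\uk$-weight space of $\bigoplus_\uk K_\Cx(\Y(\uk))$ has dimension $\dim_\C \big(\Sym^{k_1}(\C^m) \otimes \cdots \otimes \Sym^{k_n}(\C^m)\big) = \prod_i \binom{m+k_i-1}{k_i}$. On the other hand, using the monomial basis of $\Sym^N(\C^n \otimes \C^m)$ attached to the basis $\{e_i \otimes f_j\}$ of $\C^n \otimes \C^m$, a monomial $\prod_{i,j}(e_i \otimes f_j)^{a_{ij}}$ has $\gl_n$-weight $(\sum_j a_{1j}, \dots, \sum_j a_{nj})$, so the $\uk$-weight space of $\Sym^N(\C^n \otimes \C^m)$ (with respect to the torus of the first $\gl_n$-factor) is again $\Sym^{k_1}(\C^m) \otimes \cdots \otimes \Sym^{k_n}(\C^m)$, of the same dimension. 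Hence the two $U_q(\sl_n)$-modules have the same character.

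Finally, since at generic $q$ a finite-dimensional $U_q(\sl_n)$-module is determined up to isomorphism by its character, the two modules are isomorphic, which is the assertion. The only genuinely delicate point — and the step I would expect to be the main obstacle — is the first one: verifying that the categorical relations decategorify cleanly to the honest quantum group relations, with the correct bookkeeping of the shift $\{1\}$ against $q$ and of the $\gl_n$ versus $\sl_n$ weight conventions. This is by now routine (compare the standard decategorification arguments for $(\g,\theta)$ actions), so the corollary is essentially immediate from Lemma \ref{lem:Ktheory}.
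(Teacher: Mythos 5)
Your proposal is correct and is essentially the same argument as the paper's one-line proof: the paper likewise appeals to Lemma \ref{lem:Ktheory} for the weight-space dimensions and then invokes the fact that a $U_q(\sl_n)$-representation over $\C(q)$ is determined by its character. You have simply spelled out the routine intermediate steps (decategorification of the action, the $\gl_n$-weight of a monomial in $\Sym^N(\C^n\otimes\C^m)$) that the paper leaves implicit.
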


\section{Braid group actions} \label{sec:equiv}

We will now try to describe more explicitly the kernels corresponding to $\T_i$ and $\phi_i$ which, by Theorem \ref{thm:affinebraid}, give us an affine braid group action on $\oplus_\uk D(\Y(\uk))$. In fact, it turns out it is nicer to describe the shifted kernels of $\T'_i$ and $\phi'_i$ (c.f. Section \ref{sec:shift}).

\subsection{The case of $ \Y(1^n) $}
We begin with the braid group action on $D(\Y(1^n))$. We consider this case first, since the action here is simpler to study, and also because this weight space is the one which links skew and symmetric Howe duality (see Section \ref{sec:skewsym}).

Inside $\Y(1^n)$ consider the open locus $\Y^o(1^n)$ where the eigenvalues of $z \in \End(L_n/L_0)$ are all distinct. For $i \in I$ define $\Z_i^o(1^n,1^n) \subset \Y^o(1^n) \times \Y^o(1^n)$ as the locus $\{(L_\bullet, L'_\bullet)\}$ where $L_n=L_n'$ and if $z|_{L_\bullet/L_0}$ has eigenvalues $(\l_1, \dots, \l_n)$ then $z|_{L'_\bullet/L_0}$ has eigenvalues $(\l_1, \dots, \l_{i+1},\l_i, \dots, \l_n)$ (i.e. we switched eigenvalues $\l_i$ and $\l_{i+1}$). Notice that $\Z_i^o(1^n,1^n)$ is just the graph of an automorphism of $\Y^o(1^n)$. We then denote
$$\Z_i(1^n,1^n) := \overline{\Z_i^o(1^n,1^n)} \subset \Y(1^n) \times \Y(1^n).$$

\begin{Proposition}\label{prop:braid}
The kernels
$$\O_{\Z_i(1^n,1^n)} \otimes (\sV_i/\sV_{i-1}) \otimes (\sV'_{i+1}/\sV'_i)^{-1} \{1\} \text{ and } \O_{\Z_i(1^n,1^n)} \{1\} $$
induce the braid element $\T'_i$ and its inverse acting on $D(\Y(1^n))$. Similarly, the kernels
$$\O_{\Z_i(1^n,1^n)} \otimes (\sV_{i+1}'/\sV'_i)^{-1} \otimes (\sV_i'/\sV'_{i-1}) \{1\} \text{ and } \O_{\Z_i(1^n,1^n)} \otimes (\sV_i/\sV_{i-1})^{-1} \otimes (\sV'_i/\sV'_{i-1}) \{1\} $$
induce the braid element $\T'_{i,1}$ and its inverse acting on $D(\Y(1^n))$.
\end{Proposition}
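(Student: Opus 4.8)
The plan is to compute the Rickard complexes defining $\T_i$ and $\T_{i,1}$ (and their inverses) explicitly on the weight space $D(\Y(1^n))$, using the kernel descriptions of $\sE_{i,\ell}$ and $\sF_{i,-\ell}$ from Section~\ref{sec:main}, and then to identify the resulting convolution with $\O_{\Z_i(1^n,1^n)}$ twisted by the claimed line bundles and shifted by $\{1\}$. The starting observation is that on $D(\Y(1^n))$ every weight $\uk$ entering the complex has $k_i, k_{i+1} \in \{0,1,2\}$, so the Rickard complexes are extremely short: in the notation of Section~\ref{sec:braids} with $\l = -k_i+k_{i+1}$, after restricting to a fixed source weight with $(k_i,k_{i+1})=(1,1)$ we have $\l=0$ and the complex is just $[\sE_{i}\sF_{i}\la -1\ra \to \id]$ (a two-term complex), and similarly the $\T_{i,1}$ complex is $[\sE_{i,1}\sF_{i,-1}\la -1\ra \to \id]$.

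First I would compute the convolution of this two-term complex as a Fourier--Mukai kernel on $\Y(1^n)\times\Y(1^n)$. The composite $\sE_i * \sF_{i}\,\1_\uk$ (with $k_i=k_{i+1}=1$) is, by the kernel definitions and a fibre-product computation analogous to Proposition~\ref{prop:ij}, the structure sheaf of the locus $\{(L_\bullet, L'_\bullet): L_j=L_j' \text{ for } j\ne i\}$ — this is $\Y(1^n)\times_{\Y(\dots,2,\dots)}\Y(1^n)$ — with some line-bundle twist and shift that I would read off from the $\det(\sV_i/\sV_i')^{\otimes-\ell}$ factors and the $\{k_i-1-i\ell\}$, $\{k_{i+1}+i\ell\}$ shifts. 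The differential to $\id = \O_\Delta$ is the unique (up to scalar) nonzero map, so its cone is the structure sheaf of the \emph{residual} locus, i.e.\ the component of $\Y(1^n)\times_{\Y(\dots,2,\dots)}\Y(1^n)$ other than the diagonal. That residual component, on the open set $\Y^o(1^n)$ where $z$ has distinct eigenvalues, is exactly the graph of the transposition swapping $\l_i\leftrightarrow\l_{i+1}$; taking closures identifies it with $\Z_i(1^n,1^n)$. This gives $\T_i'$ as $\O_{\Z_i(1^n,1^n)}$ up to a twist and shift; I would then trace the accumulated grading shift through the definition $\T_i'\1_\uk := \T_i\1_\uk[-k_i]\la k_i\ra$ and through the passage $\la 1\ra \mapsto \{1\}$ (from Section~\ref{sec:catactions}, using that on this side the cohomological-equivariant shift is purely equivariant) to see that everything collapses to a single $\{1\}$ with no residual cohomological shift and no residual twist for $\T_i'$ — whereas for $\T_{i,1}'$ the $\det(\sV_i/\sV_i')$ factors in $\sE_{i,1},\sF_{i,-1}$ survive the convolution and produce the line bundle $(\sV_{i+1}'/\sV_i')^{-1}\otimes(\sV_i'/\sV_{i-1}')$ in the statement. (One also needs to handle the boundary weights where $(k_i,k_{i+1})\ne(1,1)$: there the terms of the Rickard complex are supported on lower-dimensional correspondences and, as in the proof of invertibility of $\T$ in \cite{CR,CKL2}, they do not contribute a new component — the convolution is still supported on $\Z_i(1^n,1^n)$.)

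For the inverses I would use $\T_i^{-1} = \T_i^R$ (or $\T_i^L$), i.e.\ take the kernel $(\O_{\Z_i(1^n,1^n)}\otimes \mathcal{L})^\vee \otimes \pi_2^*\omega_{\Y(1^n)}[\dim\Y(1^n)]$ from the adjunction formula in Section~\ref{sec:setup}, and simplify using that $\Z_i(1^n,1^n)$ is birational to $\Y(1^n)$ (it is the closure of a graph of an automorphism, hence generically an isomorphism) together with Corollary~\ref{cor:small}, so that $\omega_{\Z_i}$ is controlled and the dualizing twist can be computed as in Lemma~\ref{lem:dualizing}. This should turn the twist $(\sV_i/\sV_{i-1})\otimes(\sV_{i+1}'/\sV_i')^{-1}$ for $\T_i'$ into its inverse twist for $(\T_i')^{-1}$; but the stated inverse kernel for $\T_i'$ is simply $\O_{\Z_i(1^n,1^n)}\{1\}$, so the point is that the dualizing sheaf of $\Z_i(1^n,1^n)$ restricted suitably exactly cancels the $\det$ twist and the homological shift, leaving only $\{1\}$ — and analogously for $\T_{i,1}'$.

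\textbf{Main obstacle.} The genuinely delicate step is the identification of the cone of the Rickard differential with $\O_{\Z_i(1^n,1^n)}$ \emph{on the nose}, including the verification that $\Z_i(1^n,1^n)$ — defined as a closure — is Cohen--Macaulay (indeed has rational singularities) so that its structure sheaf is what appears, and the precise bookkeeping of all grading shifts and $\det$-twists so that they cancel to give exactly $\{1\}$ and exactly the line bundles in the statement. In particular one must check that the residual component $V_1'$ appearing (as in Proposition~\ref{prop:triangle}, which does essentially this computation for the $\sl_3$ triangle) coincides with $\Z_i$ and carries the trivial twist, and one must confirm the ideal-sheaf computation $\sI_{\Delta\cap\Z_i,\Delta}\cong \O_\Delta\{-2\}$ (the $x_i-x_{i+1}$ equation), which is what forces the single $\{1\}$ in the final answer rather than some $[1]$ or larger shift. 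I expect the rest — the $\T_{i,1}$ case, the inverses, and the boundary weights — to be routine variations once this core computation is in hand.
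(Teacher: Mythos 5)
Your overall strategy---form the two-term Rickard complex, compute $\sE_i * \sF_i$ as the structure sheaf of $\W_i = \{(L_\bullet,L'_\bullet): L_j = L_j' \text{ for } j\ne i\}$ (appropriately shifted), decompose $\W_i$ into the two components $\Delta$ and $\Z_i(1^n,1^n)$, and read off the cone---is exactly the paper's approach, and the observation that $\l=0$ collapses the complex to $\sE_i\sF_i\la-1\ra\to\id$ is correct. But there is a genuine error in your identification of the cone. You say the cone of $\O_{\W_i}\to\O_\Delta$ is ``the structure sheaf of the residual locus'' and later conclude there is ``no residual twist for $\T'_i$.'' That contradicts the statement you are proving: the kernel for $\T'_i$ \emph{is} twisted by $(\sV_i/\sV_{i-1})\otimes(\sV'_{i+1}/\sV'_i)^{-1}$. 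The cone is not $\O_{\Z_i}$ but the ideal sheaf $\sI_{\Delta,\W_i} = \sI_{\Delta\cap\Z_i,\,\Z_i} = \O_{\Z_i}(-\Delta\cap\Z_i)$, and the key computation (which you omit) is that $\Delta\cap\Z_i\subset\Z_i$ is cut out by the vanishing of the natural map of line bundles $\sV_i/\sV_{i-1}\to\sV'_{i+1}/\sV'_i$, giving precisely $\O_{\Z_i}(-\Delta\cap\Z_i)\cong\O_{\Z_i}\otimes(\sV_i/\sV_{i-1})\otimes(\sV'_{i+1}/\sV'_i)^{-1}$. This twist does not come from the $\det$ factors in the kernels of $\sE_i,\sF_i$ (those are trivial for $\ell=0$); it is purely a geometric ideal-sheaf phenomenon. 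You appear to have swapped the roles of the two kernels: the \emph{inverse} $(\T'_i)^{-1}$ is the untwisted $\O_{\Z_i}\{1\}$, and the ideal-sheaf computation $\sI_{\Delta\cap\Z_i,\Delta}\cong\O_\Delta\{-2\}$ you flag at the end is the one used for that inverse, not for $\T'_i$ itself.

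On the inverse: your proposed route via $\T_i^{-1}=\T_i^R$ and the Grothendieck--Verdier adjunction formula (computing $\sP^\vee\otimes\pi_2^*\omega_{\Y(1^n)}[\dim\Y(1^n)]$ and simplifying via $\omega_{\Z_i}$) is genuinely different from the paper's argument, which instead uses the \emph{other} exact triangle from the same decomposition, $\O_\Delta(-\Delta\cap\Z_i)\to\O_{\W_i}\to\O_{\Z_i}$, and the identification $\O_\Delta(-\Delta\cap\Z_i)\cong\O_\Delta\{-2\}$ via the characteristic-polynomial map $\ch$. The paper's route is lighter: it avoids computing any dualizing sheaves, trading them for the elementary observation that $\Delta\cap\Z_i\subset\Delta$ is carved out by $x_i-x_{i+1}$. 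Your duality approach would require knowing $\omega_{\Z_i}$ (which in turn needs $\Z_i$ to be Gorenstein or at least CM---you correctly flag this concern, but the paper sidesteps it entirely for the inverse). Finally, there are no ``boundary weights'' to worry about here: on $\Y(1^n)$ every $k_i=1$, and $\F_i^{(s)}\1_{1^n}=0$ for $s\ge 2$, so the Rickard complex really is the two-term complex you wrote.
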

\begin{proof}
The kernel for $\T_i$ in this case is
$$\sT_i := \Cone(\sE_i * \sF_i \{-1\} \rightarrow \O_{\Delta}).$$
Since everything intersects in the expected dimension it is straight-forward to check that
$$\sE_i * \sF_i \cong \O_{\W_i} \{1\} \in D(\Y(1^n) \times \Y(1^n))$$
where $\W_i = \{L_\bullet, L'_\bullet: L_j = L_j' \text{ for } j \ne i \}$. Now $\W_i$ is the union of two components: the diagonal $\Delta$ (where $L_i=L_i'$) and the locus $\Z_i(1^n,1^n)$ (the closure of the locus where $L_i \ne L_i'$). The intersection $\Delta \cap \Z_i(1^n,1^n) \subset \Z_i(1^n,1^n)$ is the locus where $L_i=L_i'$. In other words, this is the locus where the map $\sV_i/\sV_{i-1} \rightarrow \sV'_{i+1}/\sV_i'$ vanishes. This gives
$$\O_{\Z_i(1^n,1^n)}(-\Delta \cap \Z_i(1^n,1^n)) \cong \O_{\Z_i(1^n,1^n)} \otimes (\sV_i/\sV_{i-1}) \otimes (\sV'_{i+1}/\sV'_i)^{-1}.$$
Thus, from the standard exact triangle involving $\Delta,\Z_i(1^n,1^n)$ and $\W_i$ we get the exact triangle
$$\O_{\Z_i(1^n,1^n)} \otimes (\sV_i/\sV_{i-1}) \otimes (\sV'_{i+1}/\sV'_i)^{-1} \rightarrow \O_{\W_i} \rightarrow \O_\Delta.$$
Comparing with the exact triangle
$$\sT_i [-1] \rightarrow \sE_i * \sF_i \{-1\} \cong \O_{\W_i} \rightarrow \O_{\Delta}$$
we get that $\sT_i [-1] \cong \O_{\Z_i(1^n,1^n)} \otimes (\sV_i/\sV_{i-1}) \otimes (\sV'_{i+1}/\sV'_i)^{-1}$. The result follows since $\sT_i' = \sT_i [-1]\{1\}$.

A similar argument shows that the kernel for $\T_i^{-1}$ is $\O_{\Z_i(1^n,1^n)} \{1\}$. More precisely, one uses the exact triangle
$$\O_{\Delta}(-\Delta \cap \Z_i(1^n,1^n)) \rightarrow \O_{\W_i} \rightarrow \O_{\Z_i(1^n,1^n)}$$
together with the fact that $\O_{\Delta}(-\Delta \cap \Z_i(1^n,1^n)) \cong \O_{\Delta} \{-2\}$. To see this last fact consider the map
$$\ch: \Y(1^n) \rightarrow \A^n = \Spec \C[x_1, \dots, x_n]$$
and note that $\Delta \cap \Z_i(1^n,1^n) \subset \Delta$ is the locus where $x_i = x_{i+1}$.

The computations of the kernels for $\T_{i,1}$ and its inverse are similar.
\end{proof}

\subsection{The general case (conjectural)}

We can try to generalize Proposition \ref{prop:braid} to $\T'_w \1_\uk$ where $w \in S_n$ and $\T'_w$ is the composition of $\T'_i$s corresponding to $w$ (here we use the usual inclusion of the symmetric group into the braid group).

To do this recall the map $ \ch : \Y(\uk) \rightarrow \A_{\uk}$ and denote again by $\Y^o(\uk)$ the open locus where the eigenvalues of $z|_{L_n/L_0}$ are all distinct. For $w \in S_n$ the natural action $\A_\uk \xrightarrow{\sim} \A_{w \cdot \uk}$ lifts to a commutative diagram
$$\xymatrix{
\Y^o(\uk) \ar[rr]^{w \cdot } \ar[d]^\ch & & \Y^o(w \cdot \uk) \ar[d]^\ch \\
\A_\uk \ar[rr]^{w \cdot } & & \A_{w \cdot \uk}
}$$
The lift is uniquely determined by the condition that if $ w(L_\bullet) = L'_\bullet $, then $L_n = L_n'$ (together with the commutativity of the diagram above). 

We let $\Z_w^o(\uk, w \cdot \uk) \subset \Y^o(\uk) \times \Y^o(w \cdot \uk) $ denote the graph of the action of $ w \in S_n $ and we denote
$$ \Z_w(\uk, w \cdot \uk) := \overline{\Z_w^o(\uk, w \cdot \uk)} \subset \Y(\uk) \times \Y(w \cdot \uk).$$
The following conjecture generalizes a result of Bezrukavnikov-Riche \cite{BR}.

\begin{Conjecture}
For any $ w \in S_n$ the kernel $\O_{\Z_w(\uk, w \cdot \uk)} \{\ell_\uk(w)\}$ induces the inverse of the braid element $\T'_w \1_\uk$.
\end{Conjecture}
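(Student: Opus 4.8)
The strategy is an induction on the length $\ell_\uk(w)$, mirroring the proof of Proposition \ref{prop:braid} (which is the base case $\uk = 1^n$, $w = s_i$) and the Bezrukavnikov--Riche argument. The content to be established is that convolving the already-known kernels for $\T'_i$ with the ``big'' kernels $\O_{\Z_w(\uk, w\cdot\uk)}\{\ell_\uk(w)\}$ behaves multiplicatively whenever lengths add, i.e. that for a reduced expression $w = s_{i_1}\cdots s_{i_\ell}$ one has
$$\O_{\Z_{s_i}(\uk', s_i\cdot\uk')}\{1\} * \O_{\Z_{w'}(\uk, w'\cdot\uk)}\{\ell_\uk(w')\} \cong \O_{\Z_{s_iw'}(\uk, s_iw'\cdot\uk)}\{\ell_\uk(s_iw')\}$$
whenever $\ell_\uk(s_iw') = \ell_\uk(w')+1$ (with $\uk' = w'\cdot\uk$). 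Granting this, the conjectured formula follows since $\T'_w$ is by definition the corresponding composite of the $\T'_i$, and the $\{\ell_\uk(w)\}$ shifts add up correctly.

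\textbf{First steps.} I would set up the relevant correspondence diagram: let $Z = \Z_{s_i}(\uk',\uk) \times_{\Y(\uk')} \Z_{w'}(w'\cdot\uk, \uk)$ inside $\Y(\uk) \times \Y(\uk') \times \Y(w'\cdot\uk)$ (abusing notation to track the middle factor), with $\pi_{13}$ the projection forgetting the middle. Over the open locus $\Y^o$, both $\Z_{s_i}^o$ and $\Z_{w'}^o$ are graphs of isomorphisms, so their composition is the graph of the composite isomorphism, i.e. $\Z_{s_iw'}^o$; thus $\pi_{13}$ restricts to an isomorphism onto a dense open subset of $\Z_{s_iw'}(\uk, s_iw'\cdot\uk)$. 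The two facts to extract are: (a) $\pi_{13}|_Z$ is birational onto $\Z_{s_iw'}(\uk,\cdot)$, and (b) the fiber product $Z$ has the expected dimension (so that the derived convolution is the underived one, i.e. no higher Tor). Point (b) should follow from a dimension count using smallness of the projections $\Y(\uk) \to \Y(\sum k_j)$ (Corollary \ref{cor:small}) together with the length-additivity hypothesis, exactly as dimension estimates are used throughout Section \ref{sec:main}. Once (a) and (b) are in hand, $\O_Z \cong \O_{\pi_{13}^{-1}(\cdot)}$ and $\pi_{13*}\O_Z$ has no higher cohomology, so $\pi_{13*}\O_Z \cong \O_{\Z_{s_iw'}(\uk,\cdot)}$ provided that target is normal --- here one invokes (as in the proof of Proposition \ref{prop:main}) that $\Z_{s_iw'}(\uk,\cdot)$ has rational singularities, or argues via Verdier duality and torsion-freeness to kill the discrepancy sheaf.

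\textbf{The grading shifts and line bundles.} A subtlety is that the kernel for $\T'_i$ (as opposed to $\T_i$) involves not just $\O_{\Z_i}$ but the twist by $(\sV_i/\sV_{i-1})\otimes(\sV'_{i+1}/\sV'_i)^{-1}$ from Proposition \ref{prop:braid}; this is why the conjecture is cleanest in the weight space $\Y(1^n)$, and why for general $\uk$ one expects the bare $\O_{\Z_w}$ only up to tensoring with explicit line bundles assembled from the $\det(\sV_j/\sV_j')$. I would track these determinant twists through the convolution using the behaviour of $\omega_\pi$ in Lemma \ref{lem:dualizing}, checking that the boundary divisors $\Z_w \cap (\text{lower length strata})$ contribute precisely the expected line-bundle corrections --- this is the step that genuinely generalizes the ideal-sheaf bookkeeping of Proposition \ref{prop:braid} and Proposition \ref{prop:triangle}. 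One then verifies that $\O_{\Z_w}\{\ell_\uk(w)\}$ is invertible (hence induces an equivalence) by exhibiting the inverse kernel $\O_{\Z_{w^{-1}}}\{\ell_\uk(w)\}$ and checking $\O_{\Z_{w^{-1}}} * \O_{\Z_w} \cong \O_\Delta$ via the same birationality-plus-normality argument applied to $w^{-1}w = e$.

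\textbf{Main obstacle.} The hard part will be establishing that the fiber product $Z$ has the expected dimension and, relatedly, that $\Z_w(\uk, w\cdot\uk)$ is normal with rational singularities --- in other words, controlling the geometry of these closures $\Z_w$, which are \emph{not} smooth for general $\uk$ (unlike the correspondences $\Y^r_i(\uk)$ of Section \ref{sec:corr}). In the Bezrukavnikov--Riche setting this is where the geometry of the Steinberg variety and its components does the work; here one would need the analogous statement for the $\Z_w$ sitting inside $\Y(\uk)\times\Y(w\cdot\uk)$, presumably deduced from the factorization structure on the Beilinson--Drinfeld Grassmannian (as in Theorem \ref{th:factsmall} and Proposition \ref{prop:isfact}) by reducing to the distinct-eigenvalue locus and gluing. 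It is precisely the absence of a clean proof of this geometric input that makes the statement a conjecture rather than a theorem.
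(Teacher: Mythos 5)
This statement is a \emph{Conjecture} in the paper: the authors do not give a proof, and they explicitly flag it as a generalization of a result of Bezrukavnikov--Riche that remains open in this setting. So there is no paper argument to compare against, and the correct thing for your proposal to do is exactly what it does --- lay out a plausible strategy and isolate the missing geometric input.

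Your sketch is the natural one: induct on the length by convolving the kernel for $(\T'_i)^{-1}$ with the kernel for $(\T'_{w'})^{-1}$, using that over the distinct-eigenvalue locus $\Y^o$ each $\Z^o_w$ is a graph so the convolution is visibly a graph there, and then controlling the closure. You also correctly pin the obstruction on the geometry of $\Z_w(\uk, w\cdot\uk)$ for general $\uk$: one needs to know these closures have rational singularities (or at least are normal and Cohen--Macaulay of the right dimension) and that the triple fiber product has expected dimension, and neither is established in the paper. This is precisely what makes the statement a conjecture. One small correction to the displayed inductive formula: the shift on the $s_i$-kernel should be $\{k'_i k'_{i+1}\}$ (where $\uk' = w'\cdot\uk$), not $\{1\}$, since $\ell_{\uk'}(s_i) = k'_i k'_{i+1}$; the constant $1$ only holds when $\uk = 1^n$. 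With that fix, the shifts $\ell_\uk(w') + k'_i k'_{i+1} = \ell_\uk(s_i w')$ do add as required when the usual lengths add. Also worth noting, as you observe, that the line-bundle twists appearing in Proposition~\ref{prop:braid} for $\T'_i$ itself (as opposed to its inverse) mean that any attempt to prove the $\T'_w$ version directly, rather than the inverse version, would carry nontrivial determinant bookkeeping across the induction.
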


Here $\ell_\uk(w)$ denotes the weighted length of $w$ where an involution exchanging $k$ and $k'$ has weight $kk'$. In particular, $\ell_{\uk}(s_i) = k_i k_{i+1}$ while $\ell_{(1^n)}(w)$ is the usual length of $w$.

\subsection{The lattice action}

Finally, we would also like to have an explicit description of the $\phi_i$. Notice that these generated an action of a lattice. Let us denote by $\sL_i$ the line bundle $\det(\sV_i/\sV_{i-1})$.

\begin{Corollary}\label{cor:braid}
The functor $\phi'_i \1_\uk = \T'_{i,1} \T'_i \1_\uk$ is given by tensoring with the line bundle $\sL_{i+1}^{-1} \otimes \sL_i$.
\end{Corollary}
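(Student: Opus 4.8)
The plan is to compute the composite kernel $\T'_{i,1}*\T'_i\1_\uk$ directly. Unwinding the shifts in the definition of the primed complexes, one first checks that $\T'_{i,1}\T'_i\1_\uk \cong (\T_{i,1}\T_i)\1_\uk[-(k_i+k_{i+1})]\{k_i+k_{i+1}\}$, so the statement is equivalent to identifying the unshifted composite $\T_{i,1}\T_i\1_\uk$ with $(\,\cdot\,)\otimes(\sL_{i+1}^{-1}\otimes\sL_i)[k_i+k_{i+1}]\{-(k_i+k_{i+1})\}$. Next I would reduce to the $\sl_2$ case: the functors $\T_i$ and $\T_{i,1}$ are assembled from the categorical $\sl_2$ action at node $i$ (generated by $\sE_i,\sF_i$, resp.\ $\sE_{i,1},\sF_{i,-1}$), whose kernels are supported on the correspondences $\Y_i^r(\uk)$ modifying only the lattice $L_i$; hence the whole computation is local at the $i$-th spot, and it suffices to treat the $\sl_2$ action on $\bigoplus_{a+b=k_i+k_{i+1}}D(\Y(a,b))$ with $n=2$, the general case differing only by the passive flag steps along which everything is pulled back. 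There the claim reads $\T_{1,1}\T_1\1_{(a,b)}\cong(\,\cdot\,)\otimes(\det(\sV_2/\sV_1)^{-1}\otimes\det\sV_1)[a+b]\{-(a+b)\}$.

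For the computation itself I see two complementary routes. First, one works over the open dense locus $\Y^o(\uk)\subset\Y(\uk)$ where the generalized eigenvalues of $z$ on the successive quotients $L_j/L_{j-1}$ are pairwise disjoint: there $\T'_i$ restricts to the structure sheaf of the graph of the block-swap isomorphism $\sigma_i:\Y^o(\uk)\xrightarrow{\sim}\Y^o(s_i\uk)$ twisted by an explicit line bundle, and likewise for $\T'_{i,1}$; since $\sigma_i$ is an involution, the composite on $\Y^o$ is tensoring by a line bundle, which one identifies with $\sL_{i+1}^{-1}\otimes\sL_i$ using $\sigma_i^*\sL_i\cong\sL_{i+1}$ on $\Y^o$. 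Second, for $\uk=1^n$ the computation can be carried out globally: by Proposition \ref{prop:braid} both $\T'_i$ and $\T'_{i,1}$ are $\O_{\Z_i(1^n,1^n)}$ twisted by explicit line bundles, and convolving them one has $\Z_i(1^n,1^n)*\Z_i(1^n,1^n)\cong\O_\Delta$ (the intersection of the two pullbacks of $\Z_i(1^n,1^n)$ to the triple product is transverse, reduced, and maps isomorphically to the diagonal), while the line-bundle factors combine—once the homological shift is absorbed in passing from $\T$ to $\T'$—to $\sL_{i+1}^{-1}\otimes\sL_i$. For general $\uk$ one then extends the $\Y^o$ identification: the kernel of $\phi'_i\1_\uk$ is that of an auto-equivalence of $D(\Y(\uk))$ whose restriction to $\Y^o(\uk)\times\Y^o(\uk)$ is $\O_\Delta\otimes\sL_{i+1}^{-1}\sL_i$, and this must be upgraded to a global isomorphism of kernels.

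I expect the main obstacle to be precisely this upgrade together with the shift bookkeeping. For $\uk\neq 1^n$ the Rickard complex defining $\T_i$ is a genuine complex of length $\min(k_i,k_{i+1})+1$, so a priori $\T_{i,1}\T_i$ is a complex supported on a Steinberg-type subvariety rather than visibly the structure sheaf of the diagonal; one needs either to show directly, using Proposition \ref{prop:main} and the uniqueness of the Rickard differentials (Remark \ref{rem:1}), that the total complex collapses, or—more conceptually—to prove that $\phi'_i\1_\uk$ is $t$-exact for the standard $t$-structure on $D(\Y(\uk))$ (using that $\ch:\Y(\uk)\to\A_\uk$ is proper with $\Cx$ contracting everything to $\ch^{-1}(0)$, together with the explicit $\Y^o$ description) and then invoke that a $t$-exact auto-equivalence whose kernel restricts to a line-bundle-twisted diagonal on a dense open is itself of that form; alternatively, one transports the $\uk=1^n$ answer to arbitrary $\uk$ via compatibility of the categorical and braid group actions with the small map $\Y(1^{k_1},\dots,1^{k_n})\to\Y(\uk)$ of Corollary \ref{cor:small}. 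One must in addition track the homological and equivariant shifts carefully so that the final answer carries no shift whatsoever; this is exactly why one works with the primed normalization $\T'_i,\phi'_i$ of Section \ref{sec:shift}, and is where most of the routine care goes.
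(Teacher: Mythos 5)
Your proposal diverges from the paper's proof in a structural way, and the crucial step — promoting the computation from the open locus $\Y^o(\uk)$ to all of $\Y(\uk)$ for $\uk\neq 1^n$ — is exactly where you leave a genuine gap. You correctly flag it as ``the main obstacle,'' but none of your three proposed resolutions is carried through, and at least one is problematic: the claim that a $t$-exact auto-equivalence whose kernel restricts to $\O_\Delta\otimes(\text{line bundle})$ over a dense open must itself have this form is not automatic (a kernel is not in general determined by its restriction to an open set — there can be contributions supported on the complement), and establishing $t$-exactness of $\phi'_i\1_\uk$ from scratch would itself be nontrivial. The small-map transport route is also only a slogan: $\Y(1^{k_1},\dots,1^{k_n})\to\Y(\uk)$ is not an equivalence-inducing map, so one would need a specific compatibility theorem relating the two affine braid actions under pushforward/pullback along it, which is not supplied.

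The paper takes a different and cleaner route for general $\uk$: after the base case $\uk=1^n$ (which it handles as you do, via Proposition~\ref{prop:braid}), it never returns to geometry. Instead it induces on the weight $\uk$ using the formal categorical relations of Lemma~\ref{lem:phiE} (the commutation of $\phi_\alpha$ with $\E_i$, resp.\ $\E_{i,1}$), combined with the $\E\F$-decomposition from condition~(\ref{co:EF}) and Krull--Schmidt uniqueness of direct-sum decompositions. Concretely, writing $\gamma_i$ for tensoring by $\sL_{i+1}^{-1}\otimes\sL_i$ and assuming $\phi'_i\cong\gamma_i$ at $\uk+\alpha_j$, one compares $\gamma_\alpha^{-1}\F_j\E_j\phi'_\alpha\1_\uk$ with $\F_j\E_j\1_\uk$ using both commutation lemmas; unique decomposition then forces $\gamma_\alpha^{-1}\phi'_\alpha\1_\uk\cong\1_\uk$. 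This argument is purely formal in the $(L\gl_n,\t)$ action and sidesteps any need to control the kernel over the complement of $\Y^o(\uk)$. Your reduction-to-$\sl_2$ heuristic at the outset would in any case be inconsistent with this strategy, since the paper's induction uses $\E_j$ for $j\neq i$ to walk between weight spaces, so the argument is genuinely not local at node $i$. To complete your proposal you would need to actually prove one of your three glueing claims; as written, the proof is incomplete.
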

\begin{Remark}
If we use our original $(L\gl_n,\t)$ action (with the shifts) then $\phi_i \1_\uk$ is given by tensoring with $\sL_{i+1}^{-1} \otimes \sL_i [s]\{-s\}$ where $s = i \uk \cdot \alpha_i + k_i+k_{i+1}$.
\end{Remark}
\begin{proof}
If $\uk = 1^n$ then this follows immediately from Proposition \ref{prop:braid} by noting that $\T'_{i,1}$ is equal to the inverse of $\T'_i$ up to tensoring by some line bundles on one side. We now recursively prove this for the other weights.

More precisely, we will be using Lemma \ref{lem:phiE}, which also holds if you replace $\phi_i$ with $\phi'_i$ and $\E_{i,1}$ with $\E'_{i,1}$. Now consider $\E_j \1_\uk$ with $\uk \cdot \alpha_j < 0$ and suppose we know that, for all $i \in I$, $\phi'_i \1_{\uk + \alpha_j}$ is isomorphic to the functor $\gamma_i$ induced by tensoring with $\sL_{i+1}^{-1} \otimes \sL_i$. As in the definition of $\phi_\alpha$, we will denote
$$\gamma_\alpha := \prod_{i \in I} \gamma_i^{a_i}$$
whenever $\alpha = \sum_{i \in I} a_i \alpha_i$.

If $\la \alpha, \alpha_j \ra = 0$ then we get
\begin{align*}
\bigoplus_{- \uk \cdot \alpha_j} \gamma_\alpha^{-1} \phi'_\alpha \1_\uk \oplus \gamma_\alpha^{-1} \E_j \F_j \phi'_\alpha \1_\uk
&\cong \gamma_\alpha^{-1} \F_j \E_j \phi'_\alpha \1_\uk \\
&\cong \F_j \gamma_\alpha^{-1} \phi'_\alpha \E_j \1_\uk \\
&\cong \F_j \E_j \1_\uk \\
&\cong \bigoplus_{- \uk \cdot \alpha_j} \1_\uk \oplus \E_j \F_j \1_\uk
\end{align*}
where the second line follows using Lemma \ref{lem:phiE} and the fact that $\gamma_\alpha$ commutes with $\F_j$ (this is not hard to check), the third line uses the fact that $\gamma_\alpha \cong \phi'_\alpha$ on $D(\Y(\uk+\alpha_j))$. Unique decomposition then implies that $\gamma_\alpha^{-1} \phi'_\alpha \1_\uk \cong \1_\uk$.

On the other hand, if $\la i,j \ra = -1$ then we repeat the argument above with $\alpha_i$ instead of $\alpha$. Using Lemma \ref{lem:phiE} together with the fact that we have $\gamma_i \E_{j,1} \cong \E_j \gamma_i$ (essentially from definition) we find that $\gamma_i^{-1} \phi'_i \1_\uk \cong \1_\uk$. This completes the proof.
\end{proof}

\section{Relation to categorical geometric skew Howe duality}\label{sec:skewhowe}

We would like to explain three relationships between the geometric categorical skew and symmetric Howe duality pictures. The first involves skew-sym Howe duality as studied in \cite{RT,TVW}, the second (and related to the first) is a generalization of the main result from \cite{CK4} and the third involves linear Koszul duality.

\subsection{Skew-Sym Howe duality}\label{sec:skewsym}

In \cite{CKM} we used skew Howe duality to give a generators and relations description of the category of $U_q(\sl_m)$-representations generated by exterior powers of the standard representation (i.e. where the objects are $\Lambda^{k_1}(\C^m) \otimes \dots \otimes \Lambda^{k_n}(\C^m)$). We call this the skew side.

On the other hand, one can consider symmetric products instead of exterior products and study the category where the objects are of the form $\Sym^{k_1}(\C^m) \otimes \dots \otimes \Sym^{k_n}(\C^m)$. We call this the sym side. One can study this category using symmetric Howe duality and try to give a generators and relations description. This was done more recently in \cite{RT,TVW}.

In both cases it is relatively easy to give a set of generators for the space of morphisms. The more difficult step is to deal with the relations these generators satisfy. This is more difficult on the sym side because symmetric Howe duality does not provide enough relations. The recent insight of \cite{RT,TVW} is that one should study both sides together. It turns out that the missing relations can be encoded in one relation (\cite{TVW} relation 10) which relates the spaces $\End((\C^m)^{\otimes n})$ on both sides (note that $(\C^m)^{\otimes n}$ is the only object which belongs to both sides).

The 2-category $\tK^{n,N}_{\GrBD,m}$ in the current paper gives a geometric categorification of the sym side. In other words, using Lemma \ref{lem:Ktheory}, we know that taking Grothendieck groups we recover the tensor products of various $\Sym^k(\C^m)$. On the other hand, in prior work starting with \cite{CKL1}, we gave a geometric categorification of the skew side consisting of a 2-category $\K^{n,N}_{\GrBD,m}$, which is defined using $Y$s instead of $\Y$s (see Section \ref{sec:Ys}).

We now explain the geometric version of the extra relation from \cite{TVW} which ``glues together'' $\tK^{n,N}_{\GrBD,m}$ and $\K^{n,N}_{\GrBD,m}$. To do this we restrict to the case $N=n$. Consider the map $\ch: \Y(1^n) \rightarrow \A^n$. The fiber over $0 \in \A^n$ is $Y(1^n)$ and we denote $\iota: Y(1^n) \rightarrow \Y(1^n)$ its inclusion.

On $\Y(1^n) \times \Y(1^n)$ we have kernel $\sT_i$ inducing $\T_i$. On the other hand, we also have a categorical $\sl_n$ action on $\K^{n,N}_{{\sf Gr},m}$ which induces a braid group action on $Y(1^n)$. Abusing notation, we again denote the corresponding kernel on $Y(1^n) \times Y(1^n)$ by $\sT_i$.

\begin{Proposition}\label{prop:iota}
Consider the inclusions
$$Y(1^n) \times Y(1^n) \xrightarrow{id \times \iota} Y(1^n) \times \Y(1^n) \xrightarrow{\iota \times id} \Y(1^n) \times \Y(1^n).$$
Then we have $(\iota \times id)^* \sT_i \cong (id \times \iota)_* \sT_i [1]\{-2\}$.
\end{Proposition}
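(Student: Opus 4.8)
The plan is to compute both sides explicitly as kernels in $D(Y(1^n)\times\Y(1^n))$, using the descriptions of $\sT_i$ provided by Proposition~\ref{prop:braid} on the $\Y$ side and its analogue on the $Y$ side, and then to analyze the behaviour of the $\Y$-side kernel under (derived) restriction to the zero-fibre of $\ch$.

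I would start by recalling, from the proof of Proposition~\ref{prop:braid}, that on $\Y(1^n)\times\Y(1^n)$ one has $\sT_i\cong\O_{\Z_i(1^n,1^n)}\otimes(\sV_i/\sV_{i-1})\otimes(\sV'_{i+1}/\sV'_i)^{-1}[1]$, sitting in the exact triangle coming from $\sE_i*\sF_i\{-1\}\cong\O_{\W_i}$ with $\W_i=\Delta\cup\Z_i(1^n,1^n)$. The same computation on the $Y$-spaces (where the shift $\la 1 \ra=[1]\{-1\}$ replaces the purely equivariant $\{1\}$) yields the parallel description of the skew-side kernel $\sT_i\in D(Y(1^n)\times Y(1^n))$ in terms of the correspondence $\{(L_\bullet,L'_\bullet)\in Y(1^n)^2:L_j=L_j'\text{ for }j\neq i\}$ and its non-diagonal component; this can be quoted from \cite{CKL1,CKL2,CK3}. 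So the statement reduces to comparing $(\iota\times id)^*$ of the $\Y$-side kernel with $(id\times\iota)_*$ of the skew-side kernel. Here $\iota\times id:Y(1^n)\times\Y(1^n)\hookrightarrow\Y(1^n)\times\Y(1^n)$ is the closed immersion of the common zero-locus of the $n$ pullbacks of the coordinate functions $x_1,\dots,x_n$ on $\A^n$, each of weight $2$; since $\Y(1^n)$ is smooth (Corollary~\ref{cor:smooth}) and $Y(1^n)=\ch^{-1}(0)$ has codimension $n$ (which follows from the results of Section~\ref{sec:geometry}), these functions form a regular sequence, so $(\iota\times id)^*$ is computed by tensoring with the corresponding Koszul complex. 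The weight $2$ on the $x_j$ is what will produce the equivariant shift $\{-2\}$, and the homological shift $[1]$ will come from a $\mathrm{Tor}_1$ contribution.

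The geometric input is a description of $\Z_i(1^n,1^n)\cap(Y(1^n)\times\Y(1^n))$. Along $\Z_i(1^n,1^n)$ the eigenvalues of $z$ on $L_\bullet$ and on $L'_\bullet$ agree up to the transposition $s_i$, and $L_j=L_j'$ for $j\neq i$ on the open graph locus, so this intersection is set-theoretically contained in the skew-side correspondence inside $Y(1^n)\times Y(1^n)$. One checks that the intersection has the expected dimension but that the restriction fails to be Tor-independent --- because $\Z_i(1^n,1^n)$ is singular and its first projection to $\Y(1^n)$, although birational, degenerates over the nilpotent locus of $Y(1^n)$ --- and then computes $(\iota\times id)^*\O_{\Z_i(1^n,1^n)}$ directly from the Koszul complex, tracking equivariant weights. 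After incorporating the line bundle $(\sV_i/\sV_{i-1})\otimes(\sV'_{i+1}/\sV'_i)^{-1}$ (which restricts along $\iota$ to the analogous bundle on the $Y$ side, since the tautological bundles $\sV_j$ are defined compatibly on $Y(\uk)$ and $\Y(\uk)$) and comparing connecting maps, the outcome should be exactly $(id\times\iota)_*\sT_i[1]\{-2\}$.

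The main obstacle is precisely this derived restriction: because $\Z_i(1^n,1^n)$ is singular (and meets $Y(1^n)\times\Y(1^n)$ non-transversally), $(\iota\times id)^*\O_{\Z_i(1^n,1^n)}$ is a genuine two-term complex, not a structure sheaf, and it is the source of the shift $[1]\{-2\}$. Carrying this out requires pinning down the local structure of $\Z_i(1^n,1^n)$ near the skew-side correspondence --- equivalently, controlling the failure of regularity of the sequence $x_1,\dots,x_n$ on its local rings --- showing that the higher Tor sheaf is the structure sheaf of that correspondence with the predicted weight-$2$ twist, and verifying that the induced differentials are the ones defining $\sT_i$ on the $Y$-spaces. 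Once this local analysis is done, the remaining work --- matching line bundles and bookkeeping the homological and equivariant shifts through the two triangles --- is routine, using that $\iota$ is the inclusion of a fibre of $\ch$.
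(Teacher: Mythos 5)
Your proposal starts from the right place (computing both kernels explicitly and comparing), but the central mechanism you propose for producing the shift $[1]\{-2\}$ is incorrect, and this is not a minor slip --- it is the crux of the argument.

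You assert that the derived restriction $(\iota\times id)^*\O_{\Z_i(1^n,1^n)}$ fails to be Tor-independent, that it is a genuine two-term complex, and that the higher Tor sheaf is the source of the shift $[1]\{-2\}$. The paper proves exactly the opposite: the intersection of $\Z_i(1^n,1^n)$ with $Y(1^n)\times\Y(1^n)$ has the \emph{expected} dimension and coincides set- and scheme-theoretically with the image of $Z_i(1^n,1^n)$ under $id\times\iota$. (One can check the dimension count: $\dim\Z_i(1^n,1^n)=\dim\Y(1^n)$, and cutting by the $n$ equations from $\ch$ drops the dimension by $n$, landing exactly on $\dim Z_i(1^n,1^n)=\dim Y(1^n)$.) So $(\iota\times id)^*\O_{\Z_i(1^n,1^n)}\cong(id\times\iota)_*\O_{Z_i(1^n,1^n)}$ with no correction terms at all. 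Had you pursued the Koszul-complex computation you describe, you would have found that the regular sequence $x_1,\dots,x_n$ remains regular on $\O_{\Z_i}$, obtained only the structure sheaf of $(id\times\iota)(Z_i(1^n,1^n))$ in degree zero, and then had no candidate source for $[1]\{-2\}$.

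Where the shift actually comes from is that $\sT_i$ has \emph{different explicit forms on the two sides} even though both are supported on the ``same'' correspondence. On the $\Y$ side, the divisor $\Delta\cap\Z_i\subset\Z_i$ is cut out by the map of line bundles $\sV_i/\sV_{i-1}\to\sV'_{i+1}/\sV'_i$, giving $\sT_i\cong\O_{\Z_i}\otimes(\sV_i/\sV_{i-1})\otimes(\sV'_{i+1}/\sV'_i)^{-1}[1]$ (a cohomological shift, no equivariant twist). On the $Y$ side, the relevant divisor is cut out by $z:\sV_{i+1}/\sV_i\to\sV_i/\sV_{i-1}\{2\}$, which has internal weight $2$, giving $\sT_i\cong\O_{Z_i}\otimes\sL^{-1}\{2\}$ with $\sL^{-1}$ the same line bundle but now a pure equivariant shift $\{2\}$ and no $[1]$. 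Comparing the two, and using $(\iota\times id)^*\O_{\Z_i}\cong(id\times\iota)_*\O_{Z_i}$ plus compatibility of the tautological bundles, you get precisely $(\iota\times id)^*\sT_i\cong(id\times\iota)_*\sT_i[1]\{-2\}$. In other words, $[1]\{-2\}$ is the discrepancy between $\langle 1\rangle=\{1\}$ on the $\Y$ side and $\langle 1\rangle=[1]\{-1\}$ on the $Y$ side (cf.\ the remark following the proposition); it has nothing to do with a Tor term. To fix your argument, replace the non-Tor-independence claim with the expected-dimension computation, and redo the $Y$-side description of $\sT_i$ (which you quoted incorrectly --- it carries a $\{2\}$ but no $[1]$, unlike its $\Y$-side counterpart).
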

\begin{Remark}\label{rem:Ts}
Note that if we use $\sT'_i$ instead of $\sT_i$ then the relation above simplifies to give $(\iota \times id)^* \sT'_i \cong (id \times \iota)_* \sT'_i$. This is because we shift both sides by $[-1]\la1\ra$ but on the $\tK^{n,N}_{\GrBD,m}$ side $\la 1 \ra = \{1\}$ while on the $\K^{n,N}_{\GrBD,m}$ side one has $\la 1 \ra = [1]\{-1\}$.
\end{Remark}
\begin{proof}
Let us first find an explicit expression for $\sT_i$ on $D(Y(1^n) \times Y(1^n))$. This is a standard calculation which has been done on several occasions going back to \cite{CK2} but we sketch it here again for completeness.

Note that the particular choice of line bundles we are using to define the functors $\E_i$ and $\F_i$ are the ones from \cite{CKL1} (the one in \cite{C1} differs by conjugation with some line bundle and shift). In particular, $\E_i \1_\uk$ and $\1_\uk \F_i$ are induced by
\begin{align*}
\O_{W_i^1(\uk)} \otimes \det(\sV'_i/\sV_{i-1}) \otimes \det(\sV_{i+1}/\sV_i)^{-1} \{k_i-1\} \in D(Y(\uk) \times Y(\uk+\alpha_i)) & \ \text{ and } \\
\O_{W_i^1(\uk)} \otimes \det(\sV'_i/\sV_i)^{k_{i+1}-k_i+1} \{k_{i+1}\} \in D(Y(\uk+\alpha_i) \times Y(\uk)) &
\end{align*}
where $W_i^1(\uk) \subset Y(\uk) \times Y(\uk+\alpha_i)$ is the usual natural correspondence.

Now, the analogue of $\Z_i(1^n,1^n)$ is
$$Z_i(1^n,1^n) = \{(L_\bullet,L'_\bullet): L_j = L_j' \text{ for } j \ne i \}.$$
This variety has two components: the locus $\Delta$ where $L_i=L'_i$ and the locus $W_i$ where $zL_{i+1} \subset L_{i-1}$. We also get the standard exact sequence
$$\O_\Delta(-W_i) \rightarrow \O_{Z_i(1^n,1^n)} \rightarrow \O_{W_i}.$$
The divisor $W_i \cap \Delta \subset \Delta$ is carved out by the vanishing of $z: \sV_{i+1}/\sV_i \rightarrow \sV_i/\sV_{i-1} \{2\}$. Thus $\O_\Delta(-W_i) \cong \O_\Delta \otimes \sL \{-2\}$ where $\sL = (\sV'_{i+1}/\sV'_i) \otimes (\sV_i/\sV_{i-1})^{-1}$. It follows that we have the exact triangle
$$\O_{W_i} \otimes \sL^{-1} [-1] \{2\} \rightarrow \O_\Delta \rightarrow \O_{Z_i(1^n,1^n)} \otimes \sL^{-1} \{2\}.$$
On the other hand, it is straight-forward to check that
$$\sE_i * \sF_i \cong \O_{W_i} \otimes \sL^{-1} \{1\}$$
and so we have the exact triangle
$$ \O_{W_i} \otimes \sL^{-1} [-1] \{2\} \cong \sE_i * \sF_i \la -1 \ra \rightarrow \O_\Delta \rightarrow \sT_i.$$
Comparing these two exact triangles we get $\sT_i \cong \O_{Z_i(1^n,1^n)} \otimes \sL^{-1} \{2\}$.

On the other hand, notice that the restriction of $\Z_i(1^n,1^n)$ from $\Y(1^n) \times \Y(1^n)$ to $Y(1^n) \times \Y(1^n)$ is of the expected dimension and is equal to the image of $Z_i(1^n,1^n)$ under $(id \times \iota)$. This means that $(\iota \times id)^* \O_{\Z_i(1^n,1^n)} \cong (id \times \iota)_* \O_{Z_i(1^n,1^n)}$. Keeping track of line bundles and shifts we have
\begin{align*}
(\iota \times id)^* \sT_i
& \cong (id \times \iota)_* (\O_{\Z_i(1^n,1^n)}) \otimes (\sV_i/\sV_{i-1}) \otimes (\sV_{i+1}'/\sV'_i)^{-1} [1] \\
& \cong (id \times \iota)_* (\O_{Z_i(1^n,1^n)} \otimes \sL^{-1} [1]) \cong (id \times \iota)_* \sT_i [1] \{-2\}.
\end{align*}
\end{proof}

Proposition \ref{prop:iota} is a categorical analogue of the ``dumbbell'' relation from \cite{RT,TVW}. Let us explain this statement in more detail.

First note that Proposition \ref{prop:iota} implies that $\T_i \circ \iota_* \cong \iota_* \circ \T_i [1]\{-2\}$. This is a standard calculation (see for instance section 5.3 of \cite{CKL3} where we discuss compatible kernels). Taking adjoints and the analogous result for $\T'_i$ also implies that $\iota^* \circ \T_i \cong \T_i \circ \iota^* [1]\{-2\}$.

On the other hand, $\iota^*: D(\Y(1^n)) \rightarrow D(Y(1^n))$ is an isomorphism at the level of K-theory. So we can use this map to identify $K(\Y(1^n))$ and $K(Y(1^n))$.

Now, $[\sT_i] \in K(\Y(1^n) \times \Y(1^n))$ is equal to $\id - q^{-1} [\sE_i] * [\sF_i]$ whereas $[\sT_i] \in K(Y(1^n) \times Y(1^n))$ is equal to $\id + q [\sE'_i] * [\sF'_i]$ where we use the primes to remember that we are on the $Y(1^n)$ side (rather than the $\Y(1^n)$). The difference in the factors of $q$ is that $\la 1 \ra = \{1\}$ in the first case while $\la 1 \ra = [1]\{-1\}$ in the second.

Hence, once we identify $K(\Y(1^n))$ with $K(Y(1^n))$, the relation $\iota^* \circ \T_i \cong \T_i \circ \iota^* [1]\{-2\}$ implies that
$$\id - q^{-1} [\sE_i] * [\sF_i] = -q^{-2} (\id + q [\sE'_i] * [\sF'_i])$$
at the level of K-theory. Rearranging this becomes
\begin{equation}\label{eq:dumbbell}
[2] \cdot \id = [\sE_i] * [\sF_i] - [\sE'_i] * [\sF'_i].
\end{equation}
This is the dumbbell relation (10) from \cite{TVW} with one sign difference.

To (partially) explain this sign difference we note that this skew-sym story should be equipped with a natural involution that exchanges both sides. This means that the relations should be invariant under $\sE_i \mapsto \sE_i'$, $\sF_i \mapsto \sF'_i$ and $\la 1 \ra \rightarrow \la 1 \ra$. Now, if at the level of Grothendieck groups you record the shift $\la 1 \ra$ by $q$ on both sides then it makes sense to have the dumbbell relation
$$[2] \cdot \id = [\sE_i] * [\sF_i] + [\sE'_i] * [\sF'_i].$$
This is what happens in \cite{TVW}. On the other hand, in our case $\la 1 \ra = \{1\}$ is recorded by $q$ on one side whereas $\la 1 \ra = [1]\{-1\}$ is recorded by $-q^{-1}$ on the other (this is just because we want the internal grading shift $\{1\}$ to always be recorded by $q$). This means that the dumbbell relation should be invariant under $q \mapsto -q^{-1}$. Notice that (\ref{eq:dumbbell}) is indeed invariant under the involution $\sE_i \mapsto \sE_i'$, $\sF_i \mapsto \sF'_i$ and $q \mapsto -q^{-1}$.

\subsection{K-theoretic geometric Satake}

In \cite{CK4} we described a K-theoretic version of the geometric Satake equivalence. This consisted of a pair of equivalences
$$KConv^{SL_n \times \C^\times}({\sf Gr}) \xleftarrow{\sim} \mathcal{AS}p_n \xrightarrow{\sim} \O_q^{min}(\frac{SL_n}{SL_n})\mod.$$
Here $KConv^{SL_n \times \C^\times}({\sf Gr})$ is the convolution category defined from the affine Grassmannian ${\sf Gr}$ of $SL_m$, $\mathcal{AS}p_n$ is an annular version of the spider category studied in \cite{CKM} and $\O_q^{min}(\frac{SL_n}{SL_n})\mod$ is a subcategory of all $SL_n$-equivariant $\O_q(SL_n)$-modules (where we are using the adjoint action) consisting of objects of the form
$$\O_q(SL_n) \otimes \Lambda^{k_1}(\C^m) \otimes \dots \otimes \Lambda^{k_n}(\C^m)$$
(the ``min'' stands for minuscule).

The category $KConv^{SL_n \times \C^\times}({\sf Gr})$ is constructed using the spaces $Y(\uk)$. One can similarly define a category using the spaces $\Y(\uk)$ or combine these categories into a larger category which for lack of better notation we denote $KConv^{SL_n \times \C^\times}_{skew-sym}({\sf Gr})$.

One can also define an extended category $\mathcal{AS}p'_n$. This is the annularization of the category from \cite{RT,TVW}. Finally, one can consider $\O_q^{skew-sym}(\frac{SL_n}{SL_n})\mod$ which is the same category as before but where the objects include both skew and sym powers of $\C^m$.

Then we expect the following equivalences of categories
$$KConv^{SL_n \times \C^\times}_{skew-sym}({\sf Gr}) \xleftarrow{\sim} \mathcal{AS}p'_n \xrightarrow{\sim} \O_q^{skew-sym}(\frac{SL_n}{SL_n})\mod.$$
We hope to explore this story in more detail in future work.

\subsection{Linear Koszul duality}
In this section, we consider those $ \Y(\uk) $ with $m=n=N$ where $N=k_1+\cdots+k_n$.

The Mirkovi\'c-Vybornov isomorphism, Theorem \ref{th:MViso}, restricts to an isomorphism $ \X(n)_0 = \X(n)_1 \cong M_n $, where $ M_n $ denotes the set of all $ n \times n $ matrices.  Let $ \Y(\uk)_0 $ denote the preimage of $ \X(n)_0 $ under the map $ \Y(\uk) \rightarrow \Y(n) \cong \X(n) $.  Then it is easy to see that we have an isomorphism $ \Y(\uk)_0 \cong M_n(\uk) $ where
$$M_n(\uk) := \{ (A, 0= V_0 \subseteq V_1 \subseteq \cdots \subseteq V_n= \C^n) : A \in M_n, AV_i \subset V_i, \dim (V_i/V_{i-1}) = k_i \}.$$
We can define $\tK_{Fl,n}$ to be the triangulated 2-category whose objects are indexed by $\uk$ (with $k_1+\dots+k_n = n$), the 1-morphisms are kernels inside $D(M_n(\uk) \times M_n(\uk'))$ and 2-morphisms are maps between kernels. The following is an immediate corollary of Theorem \ref{thm:main}.

\begin{Corollary}\label{cor:action1}
The $(L\gl_n, \theta)$ action on $\tK^{n,n}_{\GrBD,n}$ from Theorem \ref{thm:main} restricts to give an $(L\gl_n,\theta)$ action on $\tK_{Fl,n}$.
\end{Corollary}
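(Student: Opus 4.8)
The plan is to verify that restriction of kernels along the open immersions $j_\uk\colon\Y(\uk)_0\hookrightarrow\Y(\uk)$ carries the $(L\gl_n,\theta)$ action of Theorem \ref{thm:main} to an $(L\gl_n,\theta)$ action on $\tK_{Fl,n}$; concretely, one defines the action on $\tK_{Fl,n}$ by restricting all kernels and $2$-morphisms, and must check that $j^\ast$ respects convolution and sends each structural isomorphism to the corresponding one. Everything hinges on one elementary compatibility: on any correspondence $\Y_i^r(\uk)\subset\Y(\uk)\times\Y(\uk+r\alpha_i)$ one has $L_n=L_n'$, so the two projections to $\Y(n)$ coincide; hence the preimages of $\X(n)_0$ under the two projections $\Y_i^r(\uk)\to\Y(\uk)$ and $\Y_i^r(\uk)\to\Y(\uk+r\alpha_i)$ agree, and this common open set $\Y_i^r(\uk)_0$ lies inside $\Y(\uk)_0\times\Y(\uk+r\alpha_i)_0$. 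The same holds for the diagonals and --- since the kernels $\sE_{i,\ell}$, $\sF_{i,-\ell}$ and all their convolutions are supported on loci where the relevant top lattices agree --- for the supports occurring in a triple convolution over $\Y(\uk)_0\times\Y(\uk')\times\Y(\uk'')_0$, which therefore also lie over $\X(n)_0$ in the middle factor. A standard flat base change along $\Y(\uk)_0\times\Y(\uk'')_0\hookrightarrow\Y(\uk)\times\Y(\uk'')$ then shows that convolution commutes with restriction to this compatible family of opens.

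Granting this, each $j^\ast\sE_{i,\ell}\1_\uk$ and $j^\ast\sF_{i,-\ell}\1_\uk$ is a kernel in $D(\Y(\uk)_0\times\Y(\uk+\alpha_i)_0)$ with support proper over the target (base change of a proper map along an open immersion), so it induces functors on $\tK_{Fl,n}$; and $\ch$ restricts to $\Y(\uk)_0$, so the map $\spn\{\alpha_i\colon i\in I\}\to\End^2(\1_\uk)$ restricts. Because $j^\ast$ is exact and commutes with $\otimes$, with (flat) pullback, and --- by the previous paragraph --- with the proper pushforwards in convolution, every isomorphism of kernels produced in the proofs of Propositions \ref{prop:adjoints}, \ref{prop:ef}, \ref{prop:ij}, \ref{prop:triangle} and \ref{prop:theta} restricts, giving conditions (\ref{co:adj}), (\ref{co:EF}), (\ref{co:EiFj}), (\ref{co:triangle}) and (\ref{co:theta}) on $\tK_{Fl,n}$. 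The one subtlety is the non-/vanishing clauses in (\ref{co:theta}) and the uniqueness of $\alpha,\beta$ in (\ref{co:triangle}): since $\X(n)$ is irreducible and $\X(n)_0$ is a nonempty open subset, each $\Y_i^r(\uk)_0$ is dense in the irreducible variety $\Y_i^r(\uk)$, so a nonzero map between the relevant (line bundle twists of) structure sheaves stays nonzero after restriction, while the pertinent $\Hom$-spaces stay at most one-dimensional for the same formal reasons as before. Conditions (\ref{co:vanish1}) and (\ref{co:new}) are vacuous because $\tK_{Fl,n}(\uk)=0$ exactly when some $k_i<0$, as for $\tK^{n,n}_{\GrBD,n}$, and $D(M_n(\uk)\times M_n(\uk'))$ is triangulated, graded by $\{1\}$, $\C$-linear and idempotent complete.

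The remaining condition (\ref{co:hom1}) is where a genuinely new input is needed, since $\ch\colon\Y(\uk)\to\A_\uk$ is not proper on the open subset $\Y(\uk)_0$. Here I would use the Mirkovi\'c--Vybornov identification $\Y(\uk)_0\cong M_n(\uk)$ and the forgetful map $M_n(\uk)\to M_n\cong\A^{n^2}$, whose fibres are closed in a partial flag variety, hence projective, so the map is proper; moreover the $\C^\times$-action used throughout restricts on $M_n$ to a contracting action with strictly positive weights (on $M_n=M_{(1,\dots,1)}$ every matrix entry has weight $2$, and in general one checks on the free entries of $M_\mu$ in \eqref{eq:M} that all have positive weight). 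Thus $\Y(\uk)_0$ admits a proper $\C^\times$-equivariant map to an affine space with a contracting action --- exactly the hypothesis used in the proof of Theorem \ref{thm:main} --- and pushing forward along it shows that $\Hom$ between any two coherent sheaves on the products in question is finite dimensional and that $\Hom(\1_\uk,\1_\uk\la l\ra)$ vanishes for $l<0$ and is one-dimensional for $l=0$, using that $\Y(\uk)_0$ is smooth and connected. I expect this last step to be the main obstacle: once a proper map from $\Y(\uk)_0$ to an affine variety with a contracting torus action is in hand (which is precisely what the Mirkovi\'c--Vybornov picture provides), the remaining axioms follow formally from restriction to the compatible family of open subsets.
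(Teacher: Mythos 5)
The paper gives no written proof --- it simply asserts the result is immediate from Theorem \ref{thm:main} --- and your verification correctly fills in what the authors leave implicit. In particular, you identify the two points that actually need checking: that $j^*$ commutes with the relevant convolutions because on every correspondence $\Y_i^r(\uk)$ one has $L_n=L_n'$, so the supports lie over $\X(n)_0$ in all factors simultaneously and flat base change along the open immersion applies; and that condition (\ref{co:hom1}) is the one place where a new geometric input is needed, supplied correctly by replacing the proper map $\ch\colon\Y(\uk)\to\A_\uk$ with the proper $\C^\times$-equivariant map $M_n(\uk)\to M_n\cong\A^{n^2}$ (which is proper since $M_n(\uk)$ is closed in $M_n\times Fl(\uk)$) together with the contracting action in which, since here $\mu=(1^n)$, every entry has weight $2$ --- the general-$M_\mu$ weight check in your parenthetical is not needed for this corollary, where only $M_{(1^n)}$ appears.
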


This action is quite natural. For example, the kernel for $\E_i \1_\uk$ is given by the structure sheaf of the subvariety
$$ M_n(k_1, \dots, k_i -1, 1, k_{i+1}, \dots, k_n) \subset M_n(\uk) \times M_n(\uk + \alpha_i).$$
Considering the dimensions of the weight spaces (similar to Corollary \ref{co:onKgroup}) we find that the $(L\gl_n,\t)$ action on $ \tK_{Fl,n} $ categorifies the $U_q(L\gl_n)$ representation $ (\C^n)^{\otimes n} $.

We can further consider the  DG-schemes $ M_n(\uk) \times^L_{M_n} \{0\} $ and DG-coherent sheaves on these varieties and their products.  These DG schemes come with an action of $ \Cx $, left over from the scaling action on $ M_n $.  We define a 2-category $ \tK_{Fl,n}^{DG} $ whose objects are sequences $ \uk $ as before and whose morphisms are DG-coherent sheaves on products $ (M_n(\uk) \times^L_{M_n} \{0\})  \times (M_n(\uk') \times^L_{M_n} \{0\}) $. The action from Corollary \ref{cor:action1} gives us an $(L\gl_n,\theta)$ action on $ \tK_{Fl,n}^{DG} $.

Our interest in these DG-schemes is motivated by the linear Koszul duality of Mirkovi\'c-Riche (see \cite{Ri}, Theorem 2.3.10) which specializes to the following statement in our case.

\begin{Theorem} \label{th:KoszulDuality}
There is an equivalence of categories
$$ D( M_n(\uk) \times^L_{M_n} \{0\} ) \cong D(T^* Fl(\uk)). $$
\end{Theorem}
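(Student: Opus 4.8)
The plan is to recognize the derived fibre product $M_n(\uk) \times^L_{M_n} \{0\}$ as an instance of the Mirkovi\'c--Riche linear Koszul duality \cite{Ri} and then to identify its Koszul dual with $T^* Fl(\uk)$. Recall that here $M_n$ is the space of all $n \times n$ matrices, i.e.\ $\gl_n$ as a vector space.

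First I would observe that $M_n(\uk)$ is the total space of a vector bundle over $Fl(\uk)$: the projection $(A, V_\bullet) \mapsto V_\bullet$ exhibits $M_n(\uk)$ as the sub-bundle $\mathcal{P} \subseteq \underline{\gl_n}$ of the trivial bundle $\underline{\gl_n} = Fl(\uk) \times \gl_n$ whose fibre over a flag $V_\bullet$ is the parabolic subalgebra $\mathfrak{p}_{V_\bullet} = \{A \in \gl_n : A V_i \subseteq V_i\}$. Under this identification the structure map $M_n(\uk) \to M_n = \gl_n$ is the composite $\mathcal{P} \hookrightarrow \underline{\gl_n} \to \gl_n$, so that $M_n(\uk) \times^L_{M_n} \{0\}$ is precisely the derived intersection, inside the total space of $\underline{\gl_n}$ over $Fl(\uk)$, of $\mathcal{P}$ with the zero section $\underline{0}$ (the zero sub-bundle). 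The leftover $\Cx$-action is the scaling of the fibres of $\underline{\gl_n}$ (trivial on $Fl(\uk)$), which is exactly the grading with respect to which linear Koszul duality is formulated.

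Next I would apply \cite{Ri}, Theorem 2.3.10, with smooth base $X = Fl(\uk)$, ambient bundle $\underline{\gl_n}$, and sub-bundles $\mathcal{P}$ and $\underline{0}$, obtaining an equivalence of $\Cx$-equivariant derived categories
$$D\big(\mathcal{P} \times^L_{\underline{\gl_n}} \underline{0}\big) \;\cong\; D\big(\mathcal{P}^\perp \times^L_{\underline{\gl_n}^*} \underline{0}^\perp\big).$$
To unwind the right-hand side, use the trace form $\langle A, B \rangle = \mathrm{tr}(AB)$ to identify $\underline{\gl_n}^* \cong \underline{\gl_n}$. Then $\underline{0}^\perp = (\underline{\gl_n}/\underline{0})^*$ is the whole ambient bundle, so $\mathcal{P}^\perp \times^L_{\underline{\gl_n}^*} \underline{0}^\perp \cong \mathcal{P}^\perp$: intersecting with the whole ambient space introduces no derived correction, leaving an honest vector bundle. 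Finally $\mathcal{P}^\perp = (\underline{\gl_n}/\mathcal{P})^*$ has fibre $(\gl_n/\mathfrak{p}_{V_\bullet})^* \cong \mathfrak{p}_{V_\bullet}^\perp$, the nilradical $\mathfrak{n}_{V_\bullet}$ of $\mathfrak{p}_{V_\bullet}$, so its total space is $\{(B, V_\bullet) : B V_i \subseteq V_{i-1}\} = T^* Fl(\uk)$ under the standard identification of the cotangent bundle of a partial flag variety. Composing these identifications yields the asserted equivalence.

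I expect no genuine obstacle beyond the geometric recognition in the first two paragraphs. The remaining points are bookkeeping: verifying the hypotheses of \cite{Ri} ($Fl(\uk)$ is smooth and $\mathcal{P}$ is a genuine sub-bundle, its rank being constant along $Fl(\uk)$), and keeping track of the $\Cx$-equivariant structure together with the homological and internal grading shifts built into the Koszul duality functor --- none of which affect the statement as phrased, the $\Cx$ simply scaling the cotangent fibres on the $T^* Fl(\uk)$ side. Conceptually this is the partial-flag (Grothendieck--Springer) counterpart of the well-known linear Koszul duality relating a thickened Springer fibre at a point to the Springer resolution.
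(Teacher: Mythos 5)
Your proposal is correct and follows essentially the same route as the paper, which simply invokes Mirkovi\'c--Riche linear Koszul duality (\cite{Ri}, Theorem 2.3.10) after observing that $M_n(\uk)$ and $T^*Fl(\uk)$ are perpendicular sub-bundles of the trivial bundle $Fl(\uk)\times\gl_n$ with respect to the trace form. Your write-up just supplies the bookkeeping the paper leaves implicit: identifying $M_n(\uk)\times^L_{M_n}\{0\}$ with the derived intersection of $\mathcal{P}$ and the zero section inside $\underline{\gl_n}$, applying the duality, and noting that the dual-side derived intersection collapses to the honest vector bundle $\mathcal{P}^\perp\cong T^*Fl(\uk)$.
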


In this equivalence we consider $ \Cx $ equivariant objects on both sides. However, the shifts $ \{1\}$ of equivariant structure do not match under this equivalence.  More precisely, we have that $ \{ 1\} $ on the left hand side is sent to $\{1\}[-1]$ on the right hand side.

The space $T^* Fl(\uk)$ denotes the cotangent bundle of $Fl(\uk)$. It can be described as
$$ \{ (A, 0= V_0 \subseteq V_1 \subseteq \cdots \subseteq V_n= \C^n) : A \in M_n, AV_i \subset V_{i-1}, \dim (V_i/V_{i-1}) = k_i \}.$$
Note that $M_n(\uk)$ is also a vector bundle over $ Fl(\uk) $.  In fact, it is the perpendicular vector bundle to $T^* Fl(\uk)$ with respect to the usual bilinear form on matrices --- this is the source for the above Koszul duality equivalence.

In \cite{CK3} we constructed a categorical $\gl_n$ action on $\oplus_\uk D(T^*Fl(\uk))$. More precisely, let $ \K_{Fl,n}$ denote the 2-category with objects as before and with morphisms given by kernels in $D(T^*Fl(\uk) \times T^* Fl(\uk'))$. Then \cite{CK3} provides an $ (\gl_n,\t) $ action on $ \K_{Fl,n}$, which categorifies the $ U_q(\gl_n)$ representation $ (\C^n)^{\otimes n} $. This action can be extended to an $(L\gl_n,\t)$ action by using line bundles.

This leads us to the following statement.

\begin{Conjecture}
Linear Koszul duality (Theorem \ref{th:KoszulDuality}) gives us an equivalence $ \tK_{Fl,n}^{DG} \rightarrow \K_{Fl,n} $ which intertwines the two $(L\gl_n, \t)$ actions on either side.
\end{Conjecture}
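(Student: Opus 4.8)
The plan is to upgrade the object-level linear Koszul duality of Theorem~\ref{th:KoszulDuality} to an equivalence of 2-categories and then check that it respects the generating data of the two $(L\gl_n,\t)$ actions. For the first step, fix sequences $\uk,\uk'$ and note the identification
$$(M_n(\uk)\times^L_{M_n}\{0\})\times(M_n(\uk')\times^L_{M_n}\{0\})\;\cong\;(M_n(\uk)\times M_n(\uk'))\times^L_{M_n\times M_n}\{(0,0)\}.$$
Here $M_n(\uk)\times M_n(\uk')$ is a vector bundle over $Fl(\uk)\times Fl(\uk')$ which is perpendicular, with respect to the trace form on $M_n\times M_n$, to $T^*Fl(\uk)\times T^*Fl(\uk')$; so Mirkovi\'c--Riche linear Koszul duality applied over the base $Fl(\uk)\times Fl(\uk')$ produces a $\Cx$-equivariant equivalence
$$\kappa_{\uk,\uk'}\colon D\big((M_n(\uk)\times^L_{M_n}\{0\})\times(M_n(\uk')\times^L_{M_n}\{0\})\big)\;\xrightarrow{\sim}\;D\big(T^*Fl(\uk)\times T^*Fl(\uk')\big)$$
sending $\{1\}$ to $\{1\}[-1]$. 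One then checks the $\kappa_{\uk,\uk'}$ are compatible with the convolution product $\pi_{13*}(\pi_{12}^*(-)\otimes\pi_{23}^*(-))$: this reduces to the behaviour of linear Koszul duality under pullback, derived tensor product over the appropriate base, and proper pushforward, all of which follow from the base-change and projection-formula compatibilities in \cite{Ri}. Assembling these yields a graded triangulated $\C$-linear 2-functor $\tK_{Fl,n}^{DG}\to\K_{Fl,n}$ that is an equivalence.

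For the second step one checks this 2-functor intertwines the generators $\E_i,\F_i,\E_{i,1},\F_{i,-1}$. On the $\tK_{Fl,n}^{DG}$ side, $\E_i\1_\uk$ is the structure sheaf of the correspondence $M_n(k_1,\dots,k_i-1,1,k_{i+1},\dots,k_n)$ (regarded in the ambient derived fibre product). This is the perpendicular bundle, over the flag incidence variety $Fl(k_1,\dots,k_i-1,1,k_{i+1},\dots,k_n)$, to the $T^*Fl$-correspondence which --- up to the line-bundle twists and shifts recorded in \cite{CK3} --- is the kernel of $\E_i$ in $\K_{Fl,n}$; applying linear Koszul duality fibrewise over this incidence variety identifies the two, and likewise for $\F_i$. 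The kernels for $\E_{i,1},\F_{i,-1}$ then correspond because on both sides they differ from $\E_i,\F_i$ by tensoring with the same line bundles on one leg, exactly as in Proposition~\ref{prop:braid} and Corollary~\ref{cor:braid}. The bookkeeping here --- matching the $[i\ell]\{\,\cdot\,\}$ shifts and $\det(\sV_i/\sV_i')$ twists in the two conventions against the shift-swap $\{1\}\leftrightarrow\{1\}[-1]$ --- is routine, and is the same phenomenon already visible in Section~\ref{sec:shift} and Remark~\ref{rem:Ts}.

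The remaining piece is the 2-morphism data: the map $\spn\{\alpha_i:i\in I\}\to\End^2(\1_\uk)$ is given on the $\tK_{Fl,n}^{DG}$ side by multiplication by $-e_1^{(i)}+e_1^{(i+1)}$, i.e. by traces of $A$ on the tautological subquotients $V_i/V_{i-1}$, and these functions are carried by $\kappa_{\uk,\uk}$ to the corresponding traces on the tautological subquotients over $T^*Fl(\uk)$, which are precisely the $\theta$-maps of the \cite{CK3} action; alternatively, condition~(\ref{co:theta}) of Section~\ref{sec:catactions} determines the $\theta$-action up to the stated (non)vanishing, and an equivalence automatically preserves this. Either way $\kappa$ is an equivalence of $(L\gl_n,\t)$ actions. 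I expect the genuine obstacle to be the first step: showing that linear Koszul duality is monoidal with respect to the full three-step convolution product (not merely compatible with the three operations in isolation) while keeping the $\Cx$-grading and the DG enhancements under control. This is the analogue of the Koszul-duality transport of the affine braid group action carried out by Bezrukavnikov--Riche, the new features being the full flag (rather than a Grassmannian) and the need to propagate the shift-swap $\{1\}\leftrightarrow\{1\}[-1]$ through every convolution --- which is exactly what forces the homological-versus-equivariant shift conventions on the two sides to differ.
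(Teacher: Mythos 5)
This statement is presented in the paper as a \emph{conjecture}, not a theorem: the paper offers no proof, remarking only that it ``should not be too hard to deduce from the results of Riche'' and pointing to forthcoming work of Nandakumar--Zhao. So there is no paper argument to compare your proposal against.

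That said, your outline is a sensible way to approach the conjecture and is consistent with the paper's own suggestion to deduce it from Mirkovi\'c--Riche linear Koszul duality. The identification $(M_n(\uk)\times^L_{M_n}\{0\})\times(M_n(\uk')\times^L_{M_n}\{0\})\cong(M_n(\uk)\times M_n(\uk'))\times^L_{M_n\times M_n}\{(0,0)\}$ and the observation that $M_n(\uk)\times M_n(\uk')$ and $T^*Fl(\uk)\times T^*Fl(\uk')$ are complementary over the product of flag varieties are the right starting point, and matching the generators up to line-bundle twists and the shift-swap $\{1\}\leftrightarrow\{1\}[-1]$ is plausible. However, you have not closed the argument: you yourself flag that the real work is proving linear Koszul duality is compatible with the full convolution product $\pi_{13*}(\pi_{12}^*(-)\otimes^L\pi_{23}^*(-))$ --- keeping track of DG enhancements, $\Cx$-gradings, and the homological shift-swap simultaneously --- and then you defer this to ``base-change and projection-formula compatibilities in \cite{Ri}'' without spelling it out. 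This is precisely the content that remains to be proved (and is why the paper leaves it as a conjecture), so your submission is a strategy sketch rather than a proof. In particular, it does not verify that the asserted 2-functor is strict enough (or coherently lax) with respect to composition of 1-morphisms to transport the categorical $(L\gl_n,\t)$ action, nor does it address how the DG-scheme $M_n(\uk)\times^L_{M_n}\{0\}$ should be handled when $\ch^{-1}(0)$ is not a local complete intersection in $M_n(\uk)$ in a way visible from the quoted Theorem~\ref{th:KoszulDuality}. A careful treatment of the monoidality of Koszul duality is the missing ingredient.
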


In fact, we believe that this conjecture should not be too hard to deduce from the results of Riche \cite{Ri}. A forthcoming paper by Nandakumar and Zhao \cite{NZ} will partially address this conjecture.

\begin{Remark}
If we do not have $k_1 + \dots + k_n = m$ then it is not clear to us how to make an analog of Theorem \ref{th:KoszulDuality}.  For one thing, the variety $ \Y(\uk)_1 $ will not usually be a vector bundle.  Also, $\Y(\uk) $ is nonempty for any choice of $k_i \in \N$, whereas $Y(\uk)$ is empty if $k_i > m$ for some $i$. Thus, in general, we cannot expect Koszul duality to relate coherent sheaves on $ \Y(\uk)_1 $ with those on a subset of $ Y(\uk) $. 
\end{Remark}

\end{document}